\DeclarePairedDelimiter{\ceil}{\lceil}{\rceil}
\DeclarePairedDelimiter{\floor}{\lfloor}{\rfloor}
\theoremstyle{plain}
\newtheorem*{classlist*}{Classification}
\newtheorem*{theorem*}{Theorem}
\newtheorem{theorem}{Theorem}[section]
\newtheorem*{lemma*}{Lemma}
\newtheorem{lemma}{Lemma}[section]
\newtheorem*{corollary*}{Corollary}
\newtheorem{corollary}{Corollary}[section]
\newtheorem{proposition}{Proposition}[section]
\theoremstyle{remark}
\newtheorem*{remark*}{Remark}
\newtheorem*{fact*}{Fact}
\newtheorem*{proposition*}{Proposition}
\newtheorem*{definition*}{Definition}
\newtheorem*{remark}{Remark}
\title{Signatures of Multiplicity Spaces in Tensor Products of $\mathfrak{sl}_2$ and $U_q(\mathfrak{sl}_2)$ Representations}
\author{\small Shashwat Kishore and Gus Lonergan}
\date{}
\begin{document}

\maketitle

\begin{abstract}
We study multiplicity space signatures in tensor products of representations of $\mathfrak{sl}_2$ and $U_q(\mathfrak{sl}_2)$, and give some applications. We completely classify definite multiplicity spaces for generic tensor products of $\mathfrak{sl}_2$ Verma modules. This provides a classification of a family of unitary representations of a basic quantized quiver variety, one of the first such classifications for any quantized quiver variety. We use multiplicity space signatures to provide the first real critical point lower bound for generic $\mathfrak{sl}_2$ master functions. As a corollary to this bound, we obtain a simple and asymptotically correct approximation for the number of real critical points of a generic $\mathfrak{sl}_2$ master function. As a first step to quantizing this picture, we obtain a formula for multiplicity space signatures in tensor products of finite dimensional simple $U_q(\mathfrak{sl}_2)$ representations.\end{abstract}

\onehalfspacing

\tableofcontents

\section{Introduction and results}
Let $\mathfrak{g}$ be a semisimple Lie algebra over $\mathbb{C}$, with a choice $\mathfrak{b}$ of Borel subalgebra. The Verma module $M_{\lambda}$ for $\mathfrak{g}$ with real highest weight $\lambda$ carries a certain Hermitian form known as the \textit{Shapovalov form}. It is uniquely determined (up to scalar) by a certain contravariance condition, and is nondegenerate for generic\footnote{In this context, \textit{generic} means lying outside the union of countably many hyperplanes; it is generally possible to write formulas for those hyperplanes.} $\lambda$. Yee has used Kazhdan-Lustzig polynomials to explicitly compute the signatures of these forms (see \cite{YV}).

Given real highest weights $\lambda_1, \ldots, \lambda_n$, the tensor product $\bigotimes_{i=1}^n M_{\lambda_i}$ carries a Hermitian form equal to the product of the Shapovalov forms. If $(\lambda_1, ..., \lambda_n)$ is generic then these forms are nondegenerate, and this tensor product splits as a direct sum:
$
\bigotimes_{i=1}^n M_{\lambda_i} = \bigoplus_{\mu\in P^+} M_{(\sum_i \lambda_i)-\mu} \otimes E_{\mu}
$.
Here $P^+$ is the positive part of the root lattice and $E_{\mu}\cong \operatorname{Hom} (M_{(\sum \lambda_i)-\mu}, \bigotimes_{i=1}^n M_{\lambda_i})$ is the \textit{level $\mu$ multiplicity space}. These summands are orthogonal, and the induced Hermitian form on each isotypic piece is nondegenerate and contravariant; it follows by uniqueness of the Shapovalov form that the induced Hermitian form on the isotypic piece $M_{(\sum_i \lambda_i)-\mu} \otimes E_{\mu}$ is the tensor product of the Shapovalov form on $M_{(\sum_i \lambda_i)-\mu}$ with a certain uniquely determined nondegenerate Hermitian form on $E_{\mu}$. The dimension of $E_{\mu}$ is known; the purpose of this paper is to investigate its signature, and some applications.

We restrict attention to $\mathfrak{sl}_2$ (and, later, $U_q(\mathfrak{sl_2})$). We take the standard basis $E,F,H$ and identify weights with their value on $H$. In this case, the meaning of \textit{generic} is made precise as follows: $\lambda\in \mathbb{R}$ is generic if $\lambda\notin \mathbb{Z}_{\geq0}$, and $(\lambda_1, \ldots, \lambda_n)\in\mathbb{R}^n$ is generic if each of $\lambda_1,\ldots,\lambda_n$ and $\sum_i\lambda_i$ is generic. The Verma module $M_{\lambda}$ is generic if $\lambda$ is; a tensor product of generic Verma modules is a module of the form $\bigotimes_{i=1}^n M_{\lambda_i}$ with $(\lambda_1,\ldots,\lambda_n)$ generic. Also $P^+=2\mathbb{Z}_{\geq 0}$; we will write $\mu=2m$ and reindex the multiplicity spaces so that $\bigotimes_{i=1}^n M_{\lambda_i} = \bigoplus_{m\in \mathbb{Z}_{\geq 0}} M_{(\sum_i \lambda_i)-2m} \otimes E_m$. The Shapovalov form is by definition contravariant in the sense that $(E,F)$ and $(H,H)$ are adjoint pairs with respect to it. We denote by $^*$ the antilinear anti-involution of $U(\mathfrak{sl}_2)$ so determined. Our main results are Theorems \ref{thm:class}, \ref{thm:cpb}, and \ref{thm:qcb}. In particular, we find all the definite multiplicity spaces in any tensor product of generic Verma modules (for $\mathfrak{sl}_2$).
\begin{theorem}\label{thm:class} The list in Appendix \ref{app:class} classifies all definite multiplicity spaces in any tensor product of generic Verma modules. \end{theorem}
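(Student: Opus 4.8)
The plan is to reduce the theorem to a finite combinatorial classification by first producing a workable description of the signature of $E_m$ and then asking exactly when that signature equals $\pm\dim E_m$. The structural engine is tensor-product associativity. Fix a \emph{fusion tree}, i.e.\ a choice of iterated bracketing of $\bigotimes_{i=1}^n M_{\lambda_i}$ into two-fold tensor products. At every internal node one has $M_\nu\otimes M_\mu=\bigoplus_{a\ge 0}M_{\nu+\mu-2a}$; for generic weights these summands are mutually orthogonal (distinct isotypic summands for a contravariant form) and each occurs with multiplicity one. Since orthogonality of submodules is preserved under $-\otimes M_\mu$, reading the tree from the leaves down exhibits $E_m$ as an \emph{orthogonal} direct sum of one-dimensional pieces, indexed by the labelings $(a_v)_v$ of the internal nodes by nonnegative integers with $\sum_v a_v=m$ (this recovers $\dim E_m=\binom{m+n-2}{n-2}$). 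The form on the piece attached to a labeling is the product, over internal nodes, of the Shapovalov norm of the corresponding $\mathfrak{sl}_2$ singular vector inside a two-fold tensor product, each computed with unit highest-weight vectors; and a short direct computation puts these norms in closed form: up to a positive rational, the level-$a$ singular vector in $M_\nu\otimes M_\mu$ has norm $\prod_{i=1}^{a}(\nu-i+1)(\mu-i+1)(\nu+\mu-a-i+2)$. Hence the sign of each labeling is an explicit product of signs of affine-linear expressions in the $\lambda_i$ and their partial sums; $E_m$ is definite exactly when all labeling-signs coincide; and, equivalently, one has recovered the general signature formula that also underlies Theorem~\ref{thm:qcb}.

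With this engine the remaining problem is purely combinatorial. First I would dispatch the cases where $E_m$ is one-dimensional, hence automatically definite: $n\le 2$, and $m=0$ for every $n$. For $n\ge 3$ and $m\ge 1$ one has $\dim E_m\ge 2$, and I would encode the input by recording, for each $\lambda_i$ and each partial sum that arises, which interval $(-\infty,0)$ or $(k,k+1)$ with $k\in\mathbb{Z}_{\ge 0}$ contains it; the sign of every labeling is then a deterministic function of this datum. Necessary conditions for definiteness come from comparing labelings within a single tree and from comparing trees related by a local re-association of three adjacent weights (all trees compute the same signature). For example, for $n=3$ and $m=1$ the two labelings of a tree carry signs $\operatorname{sign}\bigl[\lambda_1\lambda_2(\lambda_1+\lambda_2)\bigr]$ and $\operatorname{sign}\bigl[(\lambda_1+\lambda_2)\lambda_3(\lambda_1+\lambda_2+\lambda_3)\bigr]$, so $E_1$ is definite iff $\lambda_1\lambda_2\lambda_3(\lambda_1+\lambda_2+\lambda_3)>0$, and re-association gives the same condition, as it must. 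Propagating such constraints forces the tuple into finitely many ``types'' and, for each type, confines $m$ either to all of $\mathbb{Z}_{\ge 0}$ or to an explicit finite set; carrying this bookkeeping to completion is designed to reproduce exactly the Classification List. The converse --- that each entry of the list is genuinely definite --- is then a routine substitution into the closed-form norm above.

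The main obstacle is precisely this combinatorial classification: a priori there is one branch per assignment of the $\lambda_i$ and their partial sums to intervals, together with a constraint on $m$, which is far too much to enumerate unaided. I expect two organizing principles to control it. The first is symmetry together with a ``backbone'' family: permuting the tensor factors and fixing the overall sign convention (which interchanges positive- and negative-definite) collapse most configurations, and when every $\lambda_i<0$ each partial sum is negative too, each of the $3a$ linear factors of a level-$a$ two-fold norm is negative, so that norm has sign $(-1)^a$ and every labeling of every tree has sign $(-1)^m$ --- hence \emph{all} $E_m$ are definite. This ``all weights negative'' family is the core of the list, and the other entries should be bounded perturbations of it. The second principle is induction on $n$: deleting the last factor decomposes $E_m$ orthogonally as $\bigoplus_{m'\le m}E'_{m'}$, each summand rescaled by the sign of the two-fold norm for fusing $M_{(\sum_{i<n}\lambda_i)-2m'}$ with $M_{\lambda_n}$ to level $m-m'$; hence a definite $E_m$ forces every $E'_{m'}$ with $m'\le m$ to be definite, and the inductive classification then restricts the admissible tuples sharply, pushing definiteness outside the backbone family to configurations with only boundedly many nonnegative $\lambda_i$ and bounded $m$. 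Turning these reductions into a complete, non-redundant list --- checking that nothing is omitted and nothing double-counted --- is the delicate heart of the argument; the two-fold norm computation and the case-by-case verifications are routine by comparison.
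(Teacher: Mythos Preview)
Your plan is sound in outline but takes a genuinely different route from the paper. You work with an explicit orthogonal basis of $E_m$ coming from a fusion tree and the two-fold norm formula (which is exactly the $q=1$ specialization of the formula proved for Theorem~\ref{thm:qcb}); definiteness becomes the question of whether all labeling signs agree. The paper instead never writes down a basis of $E_m$: it encodes everything in the \emph{signature character} $\operatorname{ch}_s$, expands $\prod_i\beta_{\lambda_i}$ in the basis $\{\beta_{\lambda-2m}\}$, and argues case-by-case on $(p,n)$ (where $p$ is the number of positive $\lambda_i$). The key device there is \emph{containment}: one shows that $\prod_i\beta_{\lambda_i}$ contains a smaller product whose nondefinite spaces are already known, which immediately bounds the levels $m$ that can be definite; the remaining finite range is then checked via an $e$-power decomposition lemma. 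Your induction ``delete the last factor, so definite $E_m$ forces definite $E'_{m'}$ for all $m'\le m$'' plays the same bounding role, and your ``all $\lambda_i<0$'' backbone is the paper's Case~1.

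Two small corrections. First, ``up to a positive rational'' for the two-fold norm is not quite right for real $\nu,\mu$: with the unit normalization one has $(u,u)=\binom{\mu}{a}\binom{\nu+\mu+1-a}{a}\big/\binom{\nu}{a}$, so your product differs from the true norm by $\prod_{i=1}^a(\nu-i+1)^2$, a positive \emph{real}. This is harmless for signs but worth stating correctly. Second, your heuristic that outside the backbone one is pushed to ``only boundedly many nonnegative $\lambda_i$'' is false as stated: Case~6 of the classification has \emph{all} $\lambda_i>0$ and $n$ arbitrary, with the first $\lceil\lambda_n\rceil+1$ spaces positive definite (plus a short finite list of sporadic exceptions). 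What is true for $n\ge 3$ and $p\ge 1$ is that the set of definite $m$ is finite and bounded in terms of $\lfloor\lambda\rfloor,\lfloor\lambda_1\rfloor,\ldots,\lfloor\lambda_n\rfloor$; your inductive scheme will recover that, but you should not expect the answer to confine the positive weights themselves. Once these are fixed, the remaining work in either approach is the same in spirit: a careful, lengthy case analysis. The paper's signature-character containment arguments are somewhat slicker for carving out the nondefinite range, while your fusion-tree picture is more transparent for verifying that a given space \emph{is} definite and ties the classification directly to the formula of Theorem~\ref{thm:qcb}.
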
 

\begin{remark} The classification of definite multiplicity spaces given by Theorem \ref{thm:class} provides a classification of a family of unitary representations of a certain quantized quiver variety, one of the first such classifications for any quantized quiver variety. Indeed let $\mathcal{U}=U(\mathfrak{sl}_2)$ and for $\lambda\in\mathbb{C}$ let $\mathcal{U}_{\lambda}$ denote the quotient of $\mathcal{U}$ by the central character associated to $\lambda$. Notice that the quantum Hamiltonian reduction $\mathcal{A}=(\mathcal{U}_{\lambda_0}\otimes\ldots\otimes\mathcal{U}_{\lambda_n}/(\mathcal{U}_{\lambda_0}\otimes\ldots\otimes\mathcal{U}_{\lambda_n})\mathfrak{sl}_2)^{\mathfrak{sl}_2}$ acts naturally on $\operatorname{Hom}(M_{\lambda_0},M_{\lambda_1}\otimes\ldots\otimes M_{\lambda_n})$. Taking $\lambda_1,\ldots,\lambda_n$ generic reals and $\lambda_0=-2m+\lambda_1+\ldots+\lambda_n$, this $\operatorname{Hom}$ space is $E_m$ as above. $\mathcal{A}$ is an example of a quantized quiver variety (see \cite{BL}) for the star-shaped quiver with one central vertex, $n+1$ other vertices each with one edge to the central vertex, and dimension vector $(2,1,\ldots,1)$. Notice that for real $\lambda$ the antilinear anti-involution $^*$ descends to one on $\mathcal{U}_{\lambda}$, and hence on $\mathcal{A}$, also denoted $^*$; moreover, essentially by definition, this latter $^*$ is the adjoint operator map with respect to the induced form on $E_m$.\end{remark}


Next, we use signatures of multiplicity spaces to study a problem in differential topology. There is a family of $\mathfrak{sl}_2$ \textit{master functions} that arises in the study of Knizhnik-Zamolodchikov equations and spin chain Gaudin models (see \cite{SV}). Namely, for any two positive integers $n,m$ and any two sequences of $n$ real numbers $z = (z_1, ..., z_n)$ and $\lambda = (\lambda_1, ..., \lambda_n)$, there is a a hyperplane arrangement $\mathcal{A}=\bigcup_{i=1}^{m}\bigcup_{j=1}^{n}\{t\in \mathbb{C}^m:t_i=z_j\}\cup\bigcup_{1\leq i<j\leq m}\{t\in \mathbb{C}^m:t_i=t_j\}\subset\mathbb{C}^m$ and a master function $F_{z, \lambda,m} : \mathbb{C}^m-\mathcal{A} \to \mathbb{C}$, defined by
$
F_{z, \lambda, m}(t_1, t_2, ..., t_m) = \operatorname{Disc}(Q) \cdot \prod_{i} |Q(z_i)|^{-\lambda_i}
$,
where $\operatorname{Disc}(Q) = \prod_{i < j} (t_i-t_j)^2$ is the discriminant of $Q(x) = \prod_{i=1}^m (x-t_i)$. A critical point $(t_1,\ldots,t_m)$ of $F_{z, \lambda, m}$ is said to be \textit{real} if the corresponding polynomial $Q(x)$ has real coefficients. Computing the number $N_{z, \lambda, m}$ of real critical points of an arbitrary master function $F_{z, \lambda, m}$ is an open and difficult problem. Recently, Mukhin and Tarasov (see \cite{MT}) have given lower bounds for numbers of real solutions in problems appearing in Schubert calculus by computing signatures of Hermitian forms on the Gaudin model. We study real critical points of the master function using an approach based on Mukhin and Tarasov's work, a Bethe ansatz setup due to Etingof, Frenkel, and Kirillov (see \cite{EFK}), and a Bethe vector characterization due to Feigin, Frenkel, and Rybnikov (see \cite{FFR}). Our second main result, Theorem \ref{thm:cpb}, uses multiplicity space signatures to give a lower bound for the number of real critical points of a generic\footnote{We call a master function generic if its real parameters are generic.} $\mathfrak{sl}_2$ master function.
\begin{theorem}\label{thm:cpb} For a generic $\mathfrak{sl}_2$ master function $F_{z, \lambda, m}$, we have
$
|\operatorname{sgn}(E_m)| \leq N_{z, \lambda, m}
$.
\end{theorem}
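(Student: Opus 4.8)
The plan is to identify $E_m$, with its induced Hermitian form, as a space of Bethe vectors carrying a commuting family of self-adjoint Gaudin Hamiltonians, and then to read off the bound from the structure theory of self-adjoint operators on a possibly indefinite Hermitian space. I would first recall the Gaudin model attached to $(z,\lambda,m)$ (see \cite{EFK}): for real $z=(z_1,\dots,z_n)$ the Gaudin Hamiltonians $\mathcal{H}_1,\dots,\mathcal{H}_n$, built from the Casimir tensor $\Omega=E\otimes F+F\otimes E+\tfrac12 H\otimes H$ placed in pairs of tensor factors and weighted by $1/(z_i-z_j)$, are pairwise commuting operators on $\bigotimes_i M_{\lambda_i}$ that commute with the diagonal $\mathfrak{sl}_2$-action, hence preserve the singular vectors of weight $(\sum_i\lambda_i)-2m$, namely $E_m$. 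Since $\Omega$ is fixed by the antilinear anti-involution ${}^{*}$ (using $E^{*}=F$, $H^{*}=H$) and each $z_i-z_j$ is real, each $\mathcal{H}_i$ is self-adjoint for the tensor product of Shapovalov forms, hence for the induced form on $E_m$.

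Next I would invoke the Bethe ansatz. By \cite{EFK} together with the Bethe vector characterization of \cite{FFR} (see also \cite{SV}), for generic parameters the critical points of $F_{z,\lambda,m}$ --- equivalently the monic degree-$m$ polynomials $Q(x)=\prod_i(x-t_i)$ --- biject with a basis of joint eigenvectors of $\mathcal{H}_1,\dots,\mathcal{H}_n$ in $E_m$, the joint spectrum is simple, and the joint eigenvalue tuple at a critical point is an explicit symmetric function of its coordinates. In particular $\dim E_m$ is the number of critical points, complex conjugation permutes them with fixed-point set exactly the real ones, and --- since conjugating a Bethe vector conjugates its joint eigenvalue and the spectrum is simple --- a critical point is real if and only if its joint eigenvalue tuple is real. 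So the number of joint eigenlines on $E_m$ with real eigenvalue tuple equals $N_{z,\lambda,m}$.

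Then I would run the signature count. For a commuting family of self-adjoint operators on a finite-dimensional nondegenerate Hermitian space with simple joint spectrum, the space is the orthogonal direct sum of its joint eigenlines; a line with non-real eigenvalue tuple is isotropic and is non-orthogonal only to the line with the conjugate tuple, so such lines pair into hyperbolic planes of signature $0$, while a line with real eigenvalue tuple is orthogonal to all the others and hence, by nondegeneracy, is a nondegenerate line contributing $\pm1$ to the signature. Therefore $\operatorname{sgn}(E_m)$ is a sum of $N_{z,\lambda,m}$ terms each equal to $\pm1$, and $|\operatorname{sgn}(E_m)|\le N_{z,\lambda,m}$.

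The hard part will be the middle step: pinning down the genericity conditions on $(z,\lambda)$ under which the Bethe ansatz is complete for a generic tensor product of Verma modules, the joint Gaudin spectrum on $E_m$ is simple, and the critical-point-to-eigenvalue map is the stated bijection, and --- relatedly --- checking that ``$Q$ has real coefficients'' is precisely the reality condition matching real joint eigenvalue tuples. If simplicity should fail on a thin exceptional set, the count must be redone with generalized joint eigenspaces, verifying that a non-real one pairs nondegenerately with its conjugate into a signature-$0$ summand while a real one is a nondegenerate summand; this is standard for self-adjoint operators on indefinite spaces but should be spelled out. A minor bookkeeping point is matching the convention for $N_{z,\lambda,m}$ (orbits of critical points under the permutation symmetry of $F$, i.e.\ polynomials $Q$) with the labelling of Bethe vectors.
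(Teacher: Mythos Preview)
Your proposal is correct and follows the same architecture as the paper: Gaudin Hamiltonians acting self-adjointly on $E_m$, completeness and simplicity of the Bethe ansatz (cited from \cite{EFK} and \cite{FFR}), and the elementary signature count for commuting self-adjoint operators on an indefinite Hermitian space.

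The one noteworthy difference is how you deduce that a real joint eigenvalue tuple forces the critical point to be real. The paper proves this directly (its Lemma~\ref{bQreal}, with a nontrivial explicit computation in Appendix~\ref{app:bQreal}): from the realness of the coefficients of $b_Q$ it extracts, by induction on $k$, that $Q^{(k)}(z_i)/Q(z_i)\in\mathbb{R}$ for all $i$, whence $Q$ has real coefficients. Your argument is cleaner: complex conjugation sends the critical point $Q$ to the critical point $\overline{Q}$ and $b_Q$ to $b_{\overline{Q}}$, with conjugate joint eigenvalue; simplicity of the joint spectrum together with the bijection between critical points and joint eigenlines then forces $Q=\overline{Q}$. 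This avoids the inductive computation entirely, but it does lean on completeness of the Bethe ansatz (so that distinct $Q$'s yield non-proportional $b_Q$'s), which is exactly the ``hard part'' you flagged; the paper's computation, by contrast, only uses simplicity of the spectrum and the explicit form of $b_Q$. Both are valid under the generic hypothesis, and your version is the more conceptual one.
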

\noindent Here $\operatorname{sgn}(E_m)$ denotes the signature of the space $E_m$ in the decomposition of $\bigotimes_{i=1}^n M_{\lambda_i}$.\\


We use this lower bound on $N_{z, \lambda, m}$ together with an upper bound on $N_{z, \lambda, m}$ given by Mukhin and Varchenko (see \cite{MV}) to show that $\operatorname{dim}(E_m) = \binom{m+n-2}{n-2}$ is a good approximation for $N_{z, \lambda, m}$ for $m$ large.
\begin{corollary}\label{cor:cpbsharp} For fixed generic $\lambda$ and $z$ sequences, we have
$
\lim_{m \to \infty} \frac{N_{z,\lambda,m}}{\binom{m+n-2}{n-2}} = 1.
$
\end{corollary}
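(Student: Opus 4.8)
The plan is to trap $N_{z,\lambda,m}$ between two bounds whose ratios to $\binom{m+n-2}{n-2}$ both tend to $1$. For the upper bound, the Mukhin--Varchenko estimate \cite{MV} says the number of (orbits of) critical points of a master function never exceeds the dimension of the associated multiplicity space, so $N_{z,\lambda,m}\le\dim E_m=\binom{m+n-2}{n-2}$. Together with Theorem \ref{thm:cpb} (the lower bound $|\operatorname{sgn}(E_m)|$ depends only on $\lambda$, not on $z$) this gives
\[
\frac{|\operatorname{sgn}(E_m)|}{\binom{m+n-2}{n-2}}\ \le\ \frac{N_{z,\lambda,m}}{\binom{m+n-2}{n-2}}\ \le\ 1 ,
\]
so the corollary reduces to $|\operatorname{sgn}(E_m)|/\dim E_m\to 1$. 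Writing $\dim E_m=p_m+q_m$ with $p_m,q_m$ the numbers of positive and negative eigenvalues of the induced form on $E_m$, this is equivalent to $\min(p_m,q_m)=o(\dim E_m)$: the Hermitian form on $E_m$ becomes, in a density sense, almost definite as $m\to\infty$. (For $n=2$ the ratio is identically $1$ since $\dim E_m=1$; assume $n\ge3$.)

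For this I would apply the signature formula of Theorem \ref{thm:qcb}, specialized to a generic tensor product of $\mathfrak{sl}_2$ Verma modules, and analyze it asymptotically via a recursion on $n$. Peeling off the last tensor factor gives an orthogonal decomposition $E_m=\bigoplus_{k=0}^m c_{k,m-k}\cdot E_k'$, where $E_k'$ is the level-$k$ multiplicity space of $\bigotimes_{i=1}^{n-1}M_{\lambda_i}$ and $c_{k,j}\ne0$ is the Shapovalov norm of the singular vector of weight $\sigma+\lambda_n-2j$ in $M_{\sigma}\otimes M_{\lambda_n}$, with $\sigma=(\lambda_1+\dots+\lambda_{n-1})-2k$; hence $\operatorname{sgn}(E_m)=\sum_{k=0}^m\operatorname{sgn}(c_{k,m-k})\operatorname{sgn}(E_k')$. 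An explicit product formula $c_{k,j}=\prod_{i=1}^{j}i(\sigma-i+1)(\sigma+\lambda_n-j-i+2)/(\lambda_n-i+1)$ then shows that once $k$ exceeds a constant depending only on $\lambda$, so that $\sigma$ and $\sigma+\lambda_n$ are negative, the $(\sigma-i+1)$ and $(\sigma+\lambda_n-j-i+2)$ sign products each equal $(-1)^{m-k}$ while the $(\lambda_n-i+1)$ product equals $(-1)^{m-k}$ times a fixed sign $\delta$ depending only on $\lambda_n$; thus $\operatorname{sgn}(c_{k,m-k})=\delta(-1)^{m-k}$ for all $k$ outside a set of size $O(1)$ (small $k$, and $k$ within $O(1)$ of $m$). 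Genericity of $\lambda$ --- in particular $\lambda_n\notin\mathbb{Z}_{\ge0}$ and $\lambda_1+\dots+\lambda_n\notin\mathbb{Z}_{\ge0}$ --- is exactly what keeps every factor in this product nonzero, so the signs are well defined.

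Now induct on $n\ge2$: the claim is that for fixed generic $\lambda$ there is a sign $\epsilon_n$ with $\operatorname{sgn}(E_m)=\epsilon_n(-1)^m\dim E_m+o(\dim E_m)$, which yields the corollary since then $|\operatorname{sgn}(E_m)|=\dim E_m+o(\dim E_m)$. The base case $n=2$ is the direct computation $\operatorname{sgn}(c_{0,m})=\epsilon_2(-1)^m$ for $m$ large. For the step, substitute the hypothesis for $E_k'$: on the main range of $k$ the summand becomes $\delta(-1)^{m-k}(\epsilon_{n-1}(-1)^k\dim E_k'+o(\dim E_k'))$, the oscillations cancel as $(-1)^{m-k}(-1)^k=(-1)^m$, and the dominant part sums by the hockey-stick identity $\sum_{k=0}^m\binom{k+n-3}{n-3}=\binom{m+n-2}{n-2}=\dim E_m$; the $O(1)$ exceptional values of $k$ contribute $O(m^{n-3})$ (those near $m$ carrying $|\operatorname{sgn}(E_k')|\le\dim E_k'=O(m^{n-3})$), and the accumulated errors sum to $o(m^{n-2})$ --- both $o(\dim E_m)$. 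This closes the induction with $\epsilon_n=\delta\epsilon_{n-1}$.

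The content, and the main obstacle, is the sign bookkeeping of the second paragraph: one must check that the three families of linear factors in the singular-vector norm combine to a single power $(-1)^{m-k}$ rather than partially cancelling, since if some factors contributed an oscillation in $m-k$ and others did not, the sum $\sum_k\operatorname{sgn}(c_{k,m-k})\operatorname{sgn}(E_k')$ would show genuine cancellation and the corollary would be false --- this is where genericity does the real work. A secondary point is that the partial sums $\lambda_1+\dots+\lambda_j$ need not be generic, so the recursion on sub-tensor-products has to be set up with care --- the statement of Theorem \ref{thm:qcb} is designed to sidestep this --- and one should confirm that the Mukhin--Varchenko bound applies to exactly the count $N_{z,\lambda,m}$ used here (real critical points with $\deg Q=m$, up to the $S_m$-symmetry), which it does for generic parameters.
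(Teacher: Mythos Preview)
Your reduction to $|\operatorname{sgn}(E_m)|/\dim E_m\to 1$ via Theorem~\ref{thm:cpb} and the Mukhin--Varchenko upper bound is exactly how the paper begins, and your argument from there is correct. After that point, however, the routes diverge. You induct on $n$, peeling off $M_{\lambda_n}$, showing $\operatorname{sgn}(c_{k,m-k})=\delta(-1)^{m-k}$ on a range of $k$ whose complement has bounded size, and summing via the hockey-stick identity to obtain $\operatorname{sgn}(E_m)=\epsilon_n(-1)^m\dim E_m+O(m^{n-3})$. The paper instead works directly with signature characters evaluated at $s=-1$: it factors each $\beta_{\lambda_i}^-$ as $e^{\ceil{\lambda_i}+\{\lambda_i\}}\,\beta_{-1}^-\cdot V_{\ceil{\lambda_i}}(e^{-2})$ where $V_d(x)=1+2x+\dots+2x^d$, uses the identity $(\beta_{-1}^-)^n=\sum_m(-1)^m\binom{m+n-2}{n-2}\beta_{-n-2m}^-$, and reads off that $(-1)^m\operatorname{sgn}(E_m)$ is, for all large $m$, literally a polynomial in $m$ of degree $n-2$ with leading coefficient $\frac{1}{(n-2)!}\prod_{\lambda_i>0} V_{\ceil{\lambda_i}}(-1)=\pm\frac{1}{(n-2)!}$.

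Both arguments prove the same refinement $\operatorname{sgn}(E_m)=\pm(-1)^m\binom{m+n-2}{n-2}+O(m^{n-3})$. The paper's generating-function computation is shorter, never leaves the full $n$-fold product (hence avoids the partial-sum genericity issue you flag), and identifies $\epsilon_n$ explicitly. Your approach has the advantage of being self-contained from the two-factor singular-vector norm, but it imports (the $q=1$ case of) Theorem~\ref{thm:qcb}, which the paper proves later and independently; that is logically fine but makes the dependency graph longer. Your genericity worry is genuine---the recursion uses the decomposition of $\bigotimes_{i<n}M_{\lambda_i}$, which can fail if $\lambda_1+\dots+\lambda_{n-1}\in\mathbb{Z}_{\ge 0}$---and the cleanest fix is the one you already indicate: run the induction on the combinatorial sum in Theorem~\ref{thm:qcb} rather than on the spaces $E_k'$, or note that $\operatorname{sgn}(E_m)$ depends only on $\lfloor\lambda_i\rfloor$ and $\lfloor\sum\lambda_i\rfloor$ and perturb so that every partial sum is non-integral.
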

Finally, we extend our work on signatures to quantum groups. For generic\footnote{We call a complex number on the unit circle generic if it is not a root of unity.} $q\in\mathbb{C}$ on the unit circle, the quantum enveloping algebra $U_q(\mathfrak{sl}_2)$ is the standard $q$-deformation of the enveloping algebra of $\mathfrak{sl}_2$ (see \cite{J}). For each nonnegative integer $a$, there is an $(a+1)$-dimensional simple representation of $U_q(\mathfrak{sl}_2)$, denoted $\widetilde{V}_a$, and this representation carries a Shapovalov form. Moreover, the tensor product $\widetilde{V}_a \otimes \widetilde{V}_b$ of two such representations (which is again a representation) carries an induced contravariant nondegenerate Hermitian form, defined using the Drinfeld coboundary structure (see \cite{D}). We recall the definition. There is a \textit{standard universal $R$-matrix}, defined for our choice of coproduct as
$
R = q^{\frac{H \otimes H}{2}} \sum_{i \geq 0} q^{\binom{i}{2}} \cdot  \frac{(q-q^{-1})^{i}}{[i]!} \cdot F^i \otimes E^i
$,
where $[i] = \frac{q^i - q^{-i}}{q-q^{-1}}$ and $[i]! = [1][2]...[i]$. The matrix $\overline{R}$ of the Drinfeld coboundary structure is defined in terms of the $R$-matrix by $\overline{R} = R(R^{21}R)^{-1/2}$. The form on the tensor product $\widetilde{V}_a \otimes \widetilde{V}_b$ is given by $(v_1 \otimes w_1, v_2 \otimes w_2) = \sum_i (a_iv_1, v_2) \cdot (b_iw_1, w_2)$, where $\overline{R} = \sum_i a_i \otimes b_i$ and the pairings $(v_1, v_2)$ and $(w_1, w_2)$ are computed using the forms on $\widetilde{V}_a$ and $\widetilde{V}_b$. By the cactus axiom (see \cite{KT} for the definition), this construction can be extended to a construction of a unique Hermitian form on the tensor product of any number of these finite dimensional simple $U_q(\mathfrak{sl}_2)$ representations.
Because the operator $\operatorname{twist}\circ\overline{R}$ is an isomorphism $\widetilde{V}_a \otimes \widetilde{V}_b\to\widetilde{V}_b \otimes \widetilde{V}_a$, the form on the tensor product is contravariant (in a sense to be explained later). As in the $\mathfrak{sl}_2$ case, the tensor product decomposes as the direct sum of other such finite-dimensional representations with some multiplicity spaces, each carrying its own induced form. Our third main result, Theorem \ref{thm:qcb}, is a combinatorial formula for the multiplicity space signatures in an arbitrary tensor product of finite dimensional simple $U_q(\mathfrak{sl}_2)$ representations.

\begin{theorem}\label{thm:qcb} We have the decomposition
$
\bigotimes_{i=1}^n \widetilde{V}_{a_i} \cong 
\bigoplus_{m \geq 0} \widetilde{V}_{(\sum_i a_i) - 2m} \otimes \widetilde{E}_m
$,
where
$$
\operatorname{sgn}(\widetilde{E}_m) = \sum_{m_1+m_2+\cdots +m_{n-1} = m} \prod_{j=1}^{n-1} \operatorname{sign} \Big[ \binom{1+\sum_{k=1}^{j+1} a_k - \sum_{k=1}^j m_k}{m_j}_q \binom{\sum_{k=1}^j a_j - \sum_{k=1}^{j-1} m_k}{m_j}_q \binom{a_{j+1}}{m_j}_q \Big].
$$
\end{theorem}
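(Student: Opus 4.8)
The plan is to induct on $n$, peeling off one tensor factor at a time, after first translating the statement into a computation about highest‑weight (singular) vectors. By construction of the forms, the induced form on $\widetilde E_m\cong\operatorname{Hom}\!\big(\widetilde V_{(\sum_i a_i)-2m},\bigotimes_i\widetilde V_{a_i}\big)$ is identified — once the Shapovalov form on $\widetilde V_{(\sum_i a_i)-2m}$ is normalized to take value $1$ on its highest‑weight vector — with the restriction of the iterated contravariant form on $\bigotimes_i\widetilde V_{a_i}$ to the space $S_m$ of highest‑weight vectors of weight $(\sum_i a_i)-2m$. So it suffices to compute the signature of this restricted form, and for that I will exhibit an explicit orthogonal basis of $S_m$. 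Using the coboundary (cactus) structure — in particular that the iterated form is compatible with bracketing the product as $(\cdots((\widetilde V_{a_1}\otimes\widetilde V_{a_2})\otimes\widetilde V_{a_3})\cdots\otimes\widetilde V_{a_n})$ — the isotypic decomposition can be performed one factor at a time, with mutually orthogonal summands at every stage. This produces the ``path basis'' $\{s_{\mathbf m}\}$ indexed by compositions $\mathbf m=(m_1,\dots,m_{n-1})$ with $m_1+\cdots+m_{n-1}=m$: let $s^{(1)}_{\mathbf m}$ be a highest‑weight vector of weight $a_1+a_2-2m_1$ in $\widetilde V_{a_1}\otimes\widetilde V_{a_2}$, and inductively let $s^{(j)}_{\mathbf m}$ be a highest‑weight vector of weight $\sum_{k\le j+1}a_k-2\sum_{k\le j}m_k$ inside $\widetilde V_{\sum_{k\le j}a_k-2\sum_{k<j}m_k}\otimes\widetilde V_{a_{j+1}}$, the first tensor factor being the submodule generated by $s^{(j-1)}_{\mathbf m}$; set $s_{\mathbf m}=s^{(n-1)}_{\mathbf m}$. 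Because the iterated isotypic decomposition is orthogonal, $\{s_{\mathbf m}\}$ is an orthogonal basis of $S_m$, so $\operatorname{sgn}(\widetilde E_m)=\sum_{\mathbf m}\operatorname{sign}\langle s_{\mathbf m},s_{\mathbf m}\rangle$, and each $\langle s_{\mathbf m},s_{\mathbf m}\rangle$ telescopes into a product, over the $n-1$ peeling steps, of the ``relative norms'' contributed at each step.

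Everything then reduces to one computation: if $\widetilde V_b\otimes\widetilde V_c$, equipped with the $\overline R$‑twisted form and with $\widetilde V_b$ normalized as above, decomposes with the component $\widetilde V_{b+c-2k}$ carrying the form $g(b,c,k)\cdot(\text{Shapovalov})$, then $\operatorname{sign} g(b,c,k)=\operatorname{sign}\!\big[\binom{1+b+c-k}{k}_q\binom{b}{k}_q\binom{c}{k}_q\big]$. To prove this I would write the singular vector $t_k\in\widetilde V_b\otimes\widetilde V_c$ of weight $b+c-2k$ explicitly as $t_k=\sum_{i=0}^k\gamma_i\,F^{(i)}v_b\otimes F^{(k-i)}v_c$ with the quantum Clebsch–Gordan coefficients $\gamma_i$, use the known weight‑vector norms in $\widetilde V_b$ and $\widetilde V_c$ (each a product of $q$‑integers $[b][b-1]\cdots$, resp. $[c][c-1]\cdots$, times a positive $q$‑factorial), and then evaluate $\langle t_k,t_k\rangle$ with respect to the twisted form $(v_1\otimes w_1,v_2\otimes w_2)=\sum_i(a_iv_1,v_2)(b_iw_1,w_2)$, $\overline R=\sum_i a_i\otimes b_i$. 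The key structural input here is the action of the monodromy element $R^{21}R$ (equivalently, the ribbon/Casimir element) on the isotypic component $\widetilde V_{b+c-2k}$, which is an explicit power of $q$; this, together with the triangular form of $R$, is what lets one push the off‑diagonal contributions of $\overline R$ through the Clebsch–Gordan expansion and check that $\langle t_k,t_k\rangle$ factors, up to a manifestly positive quantity, as $\binom{1+b+c-k}{k}_q\binom{b}{k}_q\binom{c}{k}_q$. Finally, substituting $b=\sum_{k\le j}a_k-2\sum_{k<j}m_k$, $c=a_{j+1}$, $k=m_j$ into this two‑factor sign and reorganizing the telescoped product over $j=1,\dots,n-1$ yields the stated sum; the reindexing uses elementary sign identities for $q$‑integers (which, in particular, account for the feature that the theorem's arguments involve the partial sums $\sum_{k\le j}m_k$ rather than $2\sum_{k<j}m_k$), and the absence of subtractions in the final formula is precisely the statement that each factor is a $\pm1$ arising as the sign of a product of $q$‑integers. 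As an alternative route one may first establish the two‑factor statement for tensor products of quantum Verma modules, where the norm of the singular vector is governed by the Shapovalov determinant, and then specialize to the finite‑dimensional quotients $\widetilde V_a$, using that the forms are compatible with the quotient maps.

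The main obstacle is the two‑factor norm computation with the $\overline R$‑twisted form: unlike the classical $\mathfrak{sl}_2$ case, $\overline R=R(R^{21}R)^{-1/2}$ is not the identity, and the square‑root factor $(R^{21}R)^{-1/2}$ is a power series whose action mixes weight spaces, so genuine work is required to control its contribution to $\langle t_k,t_k\rangle$ and to extract the clean three‑$q$‑binomial factorization of the sign; computing the monodromy eigenvalue on each isotypic component and tracking how it interacts with the cross‑terms of $\overline R$ is the heart of the argument. A secondary, more bookkeeping obstacle is justifying that the iterated coboundary form really does decompose factor‑by‑factor with mutually orthogonal summands — so that the path basis is honestly orthogonal with a telescoping product of norms — which is where the cactus axiom enters and must be invoked with care, since the relevant compatibility is a statement about Hermitian forms, not merely about the underlying modules. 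Once these are in place, the remaining reindexing is routine.
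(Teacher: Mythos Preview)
Your proposal is correct and follows essentially the same approach as the paper: Section~4 carries out exactly the two--factor computation you describe, writing the explicit singular vector $u$, observing that $R^{21}R$ acts on the isotypic piece $\widetilde V_{a+b-2m}$ by the scalar $q^{ab-2am-2bm+2m^2-2m}$ (so $\overline R$ is easy to apply, contrary to your worry that the square root ``mixes weight spaces''), and then evaluating $(u,u)=\frac{\binom{b}{m}_q}{\binom{a}{m}_q}\binom{a+b+1-m}{m}_q$, with the telescoping over the path basis left implicit. The one concrete tool you do not name is the quantum Vandermonde identity, which is what collapses the cross--term sum $\sum_i c'_{m,m-i}\,\overline{c_{m,i}}\,\binom{a}{i}_q\binom{b}{m-i}_q$ into the clean $q$--binomial product.
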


\begin{remark*} \begin{enumerate}\item When $q = 1$, the formula is still valid (even though $1$ is not generic) and gives multiplicity space signatures for a tensor product of generic $\mathfrak{sl}_2$ Verma modules.\ \item The content of this theorem is the case $n=2$; the case of general $n$ follows (e.g. by induction). In particular we make no claim as to the definiteness of these spaces in the quantum case, and suppose the form given above may not be particularly well adapted to that question.\end{enumerate} \end{remark*}\

The paper is organized as follows. In Section \ref{sec:prelims}, we cover definitions and background information. In Section \ref{sec:cpb}, we prove Theorem \ref{thm:cpb} and the approximation given by Corollary \ref{cor:cpbsharp}. In Section \ref{sec:qcb}, we prove Theorem \ref{thm:qcb}. We leave the proof (and indeed the statement) of the classification claimed by Theorem \ref{thm:class} to the appendix.

\section{Preliminaries}\label{sec:prelims}
\subsection{Signature characters}
Let $V$ be a representation of $\mathfrak{sl}_2$ on which $H$ acts diagonally, with finite dimensional weight spaces, and whose weights live in the union of finitely many strings of the form $\lambda-\mathbb{Z}^+$, $\lambda\in\mathbb{R}$. The tensor product of any two such has the same property. Assume furthermore $V$ is endowed with a nondegenerate contravariant Hermitian form - that is to say, one for which the adjointness conditions $H^* = H$, $E^* = F$, and $F^* = E$ hold. Examples include the Verma modules $M_\lambda$ with $\lambda\in\mathbb{R}\backslash\mathbb{Z}^+$ together with their Shapovalov forms. Notice that the weight spaces in any such representation are orthogonal, and that direct sums and tensor products of these representations naturally have the same properties.

To any such representation $V$ of $\mathfrak{sl}_2$, we can associate a \textit{signature character} $\operatorname{ch}_s(V)$, which is a (possibly infinite) sum of powers of the formal symbol $e$ with coefficients in the ring $\mathbb{Z}[s]/(s^2-1)$. Namely, we put $\operatorname{ch}_s(V)=\sum_{\alpha} (a_\alpha+b_\alpha s) e^{\alpha}
$, 
where the sum is taken over all weights $\alpha$. Here $a_\alpha$ is the maximal dimension of a positive definite subspace of the weight space $V_\alpha$, and $b_\alpha$ is the maximal dimension of a negative definite subspace of $V_\alpha$. Thus $a_\alpha+b_\alpha$ is the dimension of $V_\alpha$ and $a_\alpha-b_\alpha$ is the signature of $V_\alpha$.  The signature character respects the direct sum and tensor product: $\operatorname{ch}_s( \bigotimes_i V_i) = \prod_i \operatorname{ch}_s(V_i)$ and $\operatorname{ch}_s( \bigoplus_i V_i) = \sum_i \operatorname{ch}_s(V_i).$

We study the tensor products of generic Verma modules using signature characters. Let $\lambda_1, \lambda_2, ..., \lambda_n$ be generic reals. For the rest of this paper, we will denote $\beta_{\lambda_i} = \operatorname{ch}_s(M_{\lambda_i})$.
\begin{proposition}\label{founder} We have
$
\operatorname{ch}_s(M_{\lambda_i}) = \beta_{\lambda_i} = \begin{cases}
\sum_{j=0}^{\infty} s^je^{\lambda_i-2j} &\mbox{if } \lambda_i < 0, \\
\sum_{j=0}^{\floor{\lambda_i}} e^{\lambda_i-2j} + \sum_{j=\ceil{\lambda_i}}^{\infty} s^{j+\ceil{\lambda_i}}e^{\lambda-2j} &\mbox{if } \lambda_i > 0.
\end{cases}
$
\end{proposition}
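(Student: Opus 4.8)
The plan is to compute the Shapovalov norms of the standard monomial basis of $M_{\lambda_i}$ directly and read off the signs. Write $v_0$ for the highest weight vector, normalized (as the Shapovalov form always is) so that $(v_0,v_0)=1$, and set $v_j=F^jv_0$, which spans the one-dimensional weight space of $M_{\lambda_i}$ of weight $\lambda_i-2j$. Because every weight space is one-dimensional, the coefficient of $e^{\lambda_i-2j}$ in $\operatorname{ch}_s(M_{\lambda_i})$ is $1$ when $(v_j,v_j)>0$ and $s$ when $(v_j,v_j)<0$; genericity of $\lambda_i$ will guarantee $(v_j,v_j)\neq 0$, so these two cases are exhaustive.

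Next I would set up the norm recursion. Using the standard identity $[E,F^j]=jF^{j-1}(H-j+1)$ in $U(\mathfrak{sl}_2)$ together with $Ev_0=0$ and $Hv_0=\lambda_i v_0$, one obtains $EF^jv_0=j(\lambda_i-j+1)F^{j-1}v_0$. Hence, writing $c_j=(v_j,v_j)$ and using the contravariance $E^*=F$,
$$
c_j=(F^jv_0,F^jv_0)=(F^{j-1}v_0,\,EF^jv_0)=j(\lambda_i-j+1)\,c_{j-1},
$$
so that $c_j=j!\prod_{k=0}^{j-1}(\lambda_i-k)$, with $c_0=1$. In particular $c_j\neq 0$ for all $j$ exactly when $\lambda_i\notin\mathbb{Z}_{\geq 0}$, which is consistent with genericity and with nondegeneracy of the Shapovalov form.

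It then remains to determine the sign of $c_j$, equivalently of $\prod_{k=0}^{j-1}(\lambda_i-k)$, since $j!>0$. If $\lambda_i<0$, every factor $\lambda_i-k$ is negative, so $\operatorname{sign}(c_j)=(-1)^j$ and the coefficient of $e^{\lambda_i-2j}$ is $s^j$, giving $\operatorname{ch}_s(M_{\lambda_i})=\sum_{j\geq 0}s^je^{\lambda_i-2j}$. If $\lambda_i>0$ (and $\lambda_i\notin\mathbb{Z}_{\geq 0}$), then $\lambda_i-k>0$ for $0\leq k\leq\floor{\lambda_i}$ and $\lambda_i-k<0$ for $k\geq\ceil{\lambda_i}=\floor{\lambda_i}+1$. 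So for $0\leq j\leq\floor{\lambda_i}$ all $j$ factors are positive and $c_j>0$ (contributing the first sum), while for $j\geq\ceil{\lambda_i}$ exactly the $j-\ceil{\lambda_i}$ factors indexed by $k\in\{\ceil{\lambda_i},\dots,j-1\}$ are negative, so $\operatorname{sign}(c_j)=(-1)^{j-\ceil{\lambda_i}}$. Since $j+\ceil{\lambda_i}\equiv j-\ceil{\lambda_i}\pmod 2$ and $s^2=1$, this sign is recorded by the coefficient $s^{j+\ceil{\lambda_i}}$, which is exactly the second sum in the statement.

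The only care needed is the elementary bookkeeping: that the two index ranges $0\leq j\leq\floor{\lambda_i}$ and $j\geq\ceil{\lambda_i}$ are contiguous (which uses that $\lambda_i$ is not an integer, so $\ceil{\lambda_i}=\floor{\lambda_i}+1$, leaving no gap), and that the exponent of $s$ in the second sum carries the correct parity. There is no substantive obstacle here; the essential content is the norm computation $c_j=j!\prod_{k=0}^{j-1}(\lambda_i-k)$, which is classical.
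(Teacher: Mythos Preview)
Your proof is correct and complete. The paper itself states this proposition without proof (it is a standard computation), so there is nothing to compare against; your argument---computing the Shapovalov norms $c_j=(F^jv_0,F^jv_0)=j!\prod_{k=0}^{j-1}(\lambda_i-k)$ via contravariance and then reading off the signs---is exactly the expected one.
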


Here the notations $\lfloor-\rfloor$ and $\lceil-\rceil$ denote respectively the floor and ceiling functions $\mathbb{R}\to\mathbb{Z}$. They will be used throughout the paper.\

There is a unique decomposition of signature characters
$\prod_{i=1}^n \beta_{\lambda_i} = \sum_{m=0}^{\infty} (a_m+sb_m)\cdot \beta_{\lambda-2m},$
where $\lambda=\sum_{i=1}^{n}\lambda_i$.
This decomposition exactly encodes the tensor product decomposition
$\bigotimes_{i=1}^n M_{\lambda_i} \cong \bigoplus_{m=0}^{\infty} M_{\lambda-2m} \otimes E_m$, in the sense that each multiplicity space $E_m$ has dimension equal to $\binom{m+n-2}{n-2} = a_m+b_m$ and signature equal to $a_m-b_m$. Hence, determining when $E_m$ is definite amounts to determining when $a_m = 0$ or $b_m = 0$.
\subsection{Quantum groups case}
For indeterminate $q$, the algebra $U_q(\mathfrak{sl}_2)$ is generated over $\mathbb{C}[q, q^{-1}, \frac{1}{q-q^{-1}}]$ by $E, F, K, K^{-1}$ with defining relations
$KEK^{-1} = q^2E$, 
$KFK^{-1} = q^{-2}F$, and 
$EF-FE = \frac{K-K^{-1}}{q-q^{-1}}.$
For each $a\in \mathbb{Z}^+$, there is a certain simple $U_q(\mathfrak{sl}_2)$-module $\widetilde{V}_a$. It is free over $\mathbb{C}[q, q^{-1}, \frac{1}{q-q^{-1}}]$ of rank $a+1$, with a basis $\{ v_i \}_{i=0}^{a}$ on which the operators $E$, $F$, and $K$ act by
$Ev_i = [a-i+1]v_{i-1}$,
$Fv_i = [i+1]v_{i+1}$, and
$Kv_i = q^{a-2i}v_i$,
where $[k] = \frac{q^k - q^{-k}}{q-q^{-1}}$. As in the classical case, the representation $\widetilde{V}_a$ carries a unique contravariant nondegenerate Hermitian form $(,)$, as a free $\mathbb{C}[q, q^{-1}, \frac{1}{q-q^{-1}}]$-module, satisfying $(v_0,v_0)=1$. Here by contravariant we mean that the adjointness conditions $E^*=F, F^*=E$ and $K^*=K^{-1}$ hold with respect to the form, and by Hermitian we mean sesquilinear with respect to the involution of the ground ring determined by $z\to\overline{z}$ for $z\in\mathbb{C}$ and $q\to q^{-1}$. Thus it will be usually possible to specialize this picture to $q$ lying on the unit circle. This form is known also as the Shapovalov form.
\begin{proposition}\label{prop:qshap} Under the Shapovalov form $(,)$ and the normalization $(v_0, v_0) = 1$, we have
$
(v_i, v_i) = \binom{a}{i}_q$.\\
Here, $\binom{a}{i}_q = \frac{[a]!}{[i]![a-i]!}$ denotes the quantum binomial coefficient.
\end{proposition}
The tensor product $\bigotimes_{i=1}^n \widetilde{V}_{a_i}$ is a representation of $U_q(\mathfrak{sl}_2)$ under the iteration of the coproduct map defined by $\Delta(E) = E \otimes 1 + K \otimes E$, 
$\Delta(F) = 1 \otimes F + F \otimes K^{-1}$, and
$\Delta(K^{\pm 1}) = K^{\pm 1} \otimes K^{\pm 1}$. Furthermore, the Shapovalov forms and the Drinfeld coboundary induce a nondegenerate contravariant Hermitan form on the representation $\bigotimes_{i=1}^n \widetilde{V}_{a_i}$. This tensor product decomposes into a direct sum of finite dimensional simple $U_q(\mathfrak{sl}_2)$ representations $\widetilde{V}_{a}$ with multiplicities; as before, isotypic pieces are orthogonal, and the uniqueness of the Shapovalov form gives each multiplicity space an induced Hermitian form. In Section \ref{sec:qcb} we outline the proof of Theorem \ref{thm:qcb} which gives a signature formula for these multiplicity spaces.

\section{Critical point bound}\label{sec:cpb}
In this section we use the Bethe ansatz method to derive the lower bound on the number of real critical points of the master function given by Theorem \ref{thm:cpb}. Then we use the bound to derive the asymptotic approximation for the number of real critical points of the master function given by Corollary \ref{cor:cpbsharp}. Throughout this section, fix two generic real sequences $\lambda_1,\ldots,\lambda_n$ and $z_1,\ldots,z_n$. Fix a positive integer $m$ and let $U\subset\mathbb{C}^m$ be given by $U=\{(t_1,\ldots,t_m)|t_i\neq z_k \operatorname{ ~for~ all~ } i,k\}$. We consider two functions on $U$. The first is the $\mathbb{C}[x]$-valued function $Q(x)(t_1,\ldots,t_m)=\prod_{i=1}^m (x-t_i)$ and the second is the $\mathbb{C}$-valued master function:
$$
F_{z, \lambda,m}(t_1, t_2, ..., t_m) = \operatorname{Disc}(Q) \cdot \prod_{k} |Q(z_k)|^{-\lambda_k} = \prod_{i < j} (t_i-t_j)^2 \cdot \prod_{i, k} |t_i-z_k|^{-\lambda_k}.
$$
We will assume that $z_k\neq z_l$ for $k\neq l$.

\subsection{Preliminaries for the proof of Theorem \ref{thm:cpb}}\label{sec:cpbprelims}
We begin with some definitions and preliminary results. 
\begin{definition*}\begin{itemize}\item For any $X \in U(\mathfrak{sl}_2)$ and any $i$ with $1 \leq i \leq n$, define an operator $X_i$ on $\bigotimes_{j=1}^n M_{\lambda_j}$ by $X_i = \underbrace{1 \otimes \cdots \otimes 1}_{i-1 \text{ factors}} \otimes X \otimes \underbrace{1 \otimes \cdots \otimes 1}_{n-i \text{ factors}}$.\
\item For each $i$ and $j$ with $1 \leq i \ne j \leq n$, define the \textit{Casimir tensor} $\Omega_{ij}$ by
$
\Omega_{ij} = E_i  F_j + F_i  E_j + \frac{H_i  H_j}{2}.
$\
\item For each $i$ with $1 \leq i \leq n$, define the \textit{Gaudin model Hamiltonian} ${\mathcal{H}}_i$ by
$
\mathcal{H}_i = \sum_{j \ne i} \frac{\Omega_{ij}}{z_i-z_j}.
$\

\item For any complex number $t$ distinct from all $z_i$, define an operator $Z(t) = \sum_{i=1}^n \frac{H_i}{t-z_i}$.\
\item For any $t$ as above, define an operator $Y(t)$ by $Y(t)= \sum_{i=1}^n \frac{F_i}{t-z_i}$.\ 

\item For $(t_1, t_2, ..., t_m)\in U$ define $b_Q = Y(t_1)Y(t_2) \cdots Y(t_m)v$, where $v$ is the tensor product of the canonical generators of each $M_{\lambda_i}$. \end{itemize}\end{definition*}

\begin{remark*} There is a clash of notation: we have the multiplicity spaces $E_m$, and the operators defined above $E_j$. This should not be cause for confusion.\end{remark*}

We now assume that $(t_1,\ldots,t_m)$ is a (complex) critical point of $F_{z, \lambda, m}$. We state some preliminary results, whose proofs can largely be found in \cite{EFK} and \cite{FFR}.
\begin{lemma}\label{lem:hcom} The Gaudin model Hamiltonians $\mathcal{H}_i$ commute with each other, they act on the multiplicity space $E_m=\operatorname{Hom}(M_{(\sum_i \lambda_i)-2m}, \bigotimes_{i} M_{\lambda_i})$ and they are self-adjoint under the induced Hermitian form on this space.
\end{lemma}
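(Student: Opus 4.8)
The plan is to establish the three assertions in order, drawing on the standard Gaudin model theory and the general properties of contravariant forms set up in Section \ref{sec:prelims}. First I would recall (from \cite{EFK}, \cite{FFR}) that the Gaudin Hamiltonians $\mathcal{H}_i = \sum_{j\neq i}\Omega_{ij}/(z_i-z_j)$ pairwise commute; this is the classical integrability statement, proven by a direct computation using the ``classical dynamical Yang--Baxter''-type identity $[\Omega_{ij}+\Omega_{ik},\Omega_{jk}]=0$ together with $[\Omega_{ij},\Omega_{kl}]=0$ for disjoint indices, so that the residue contributions in $[\mathcal{H}_i,\mathcal{H}_j]$ cancel telescopically. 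Since this is standard I would simply cite it.

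Next, for the action on $E_m$: each $\Omega_{ij}$ is built from the $\mathfrak{sl}_2$-action on factors $i$ and $j$, hence $\mathcal{H}_i$ commutes with the diagonal $\mathfrak{sl}_2$-action $\Delta^{(n)}(X)=\sum_k X_k$ on $\bigotimes_j M_{\lambda_j}$ — indeed $\Omega_{ij}$ is (up to scalar and lower-order terms) the image of the Casimir under a two-fold coproduct, so it is $\mathfrak{sl}_2$-invariant, and a sum of $\mathfrak{sl}_2$-invariant operators is $\mathfrak{sl}_2$-invariant. An operator commuting with the global $\mathfrak{sl}_2$-action preserves each isotypic component $M_{(\sum_i\lambda_i)-2m}\otimes E_m$ and acts there as $\mathrm{id}\otimes(\text{something})$; that ``something'' is the induced operator on $E_m = \operatorname{Hom}(M_{(\sum_i\lambda_i)-2m},\bigotimes_i M_{\lambda_i})$, concretely sending $\varphi\mapsto \mathcal{H}_i\circ\varphi$. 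One small point to check is that $\mathcal{H}_i$ genuinely preserves the isotypic decomposition rather than merely the weight spaces — this follows because the decomposition $\bigotimes M_{\lambda_i}=\bigoplus_m M_{\lambda-2m}\otimes E_m$ is the isotypic decomposition for the diagonal action and $\mathcal{H}_i$ is a morphism of that action.

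For self-adjointness: I would compute the adjoint of $\Omega_{ij}$ under the tensor-product Hermitian form (the product of the Shapovalov forms) using the contravariance relations $E^*=F$, $F^*=E$, $H^*=H$ applied factorwise. Since the form on the tensor product is the product form, $(X_i)^* = (X^*)_i$; therefore $\Omega_{ij}^* = (E_i F_j)^* + (F_i E_j)^* + \tfrac12 (H_i H_j)^* = F_i E_j + E_i F_j + \tfrac12 H_i H_j = \Omega_{ij}$ — here one uses that operators on distinct tensor factors commute, so $(E_i F_j)^* = F_j^* E_i^{*}$ computed slot-by-slot equals $F_i E_j$ after reordering the commuting factors. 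Hence each $\mathcal{H}_i$, being a real-linear combination of the self-adjoint $\Omega_{ij}$ (the coefficients $1/(z_i-z_j)$ are real by genericity of the $z$'s), is self-adjoint on $\bigotimes M_{\lambda_i}$. A self-adjoint operator that preserves an orthogonal direct summand restricts to a self-adjoint operator there, and the induced form on $E_m$ is exactly the one obtained by stripping off the (nondegenerate) Shapovalov form on the $M_{\lambda-2m}$ factor, so the induced operator on $E_m$ is self-adjoint under the induced form.

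The only genuine subtlety — and thus the main obstacle — is bookkeeping with the contravariance of the tensor-product form: one must be careful that the anti-involution $^*$ on $U(\mathfrak{sl}_2)$ interacts correctly with the coproduct-twisted form structure, i.e. that contravariance ``$X^*$ is the adjoint of $X$'' really does hold slot-by-slot for the product of Shapovalov forms, so that $(X_i)^* = (X^*)_i$. Once that identity is in hand, everything else is formal; the commutativity statement is entirely citable, and the passage from operators on the whole tensor product to operators on $E_m$ is the standard Schur-type argument for isotypic components.
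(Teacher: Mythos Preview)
Your proposal is correct and follows the same underlying approach as the paper: the paper's proof is simply a reference to \cite{EFK}, and your sketch spells out precisely the standard arguments (commutativity via the identity $[\Omega_{ij}+\Omega_{ik},\Omega_{jk}]=0$, $\mathfrak{sl}_2$-invariance of each $\Omega_{ij}$ giving the action on $E_m$, and slotwise contravariance $E^*=F$, $H^*=H$ giving self-adjointness) that this citation is meant to invoke. There is nothing to add; your expansion is consistent with, and more explicit than, the paper's one-line proof.
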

We should clarify what it means for $\mathcal{H}_i$ to act on $E_m=\operatorname{Hom}(M_{(\sum_i \lambda_i)-2m}, \bigotimes_{i} M_{\lambda_i})$. $\mathcal{H}_i$ is defined as an operator on $\bigotimes_{i} M_{\lambda_i}$. It turns out that these operators intertwine the action of $\mathfrak{sl}_2$ and so act on $E_m=\operatorname{Hom}(M_{(\sum_i \lambda_i)-2m}, \bigotimes_{i} M_{\lambda_i})$ by composition.
\begin{proof} See \cite{EFK}. \end{proof}

Notice that $E_m$ is canonically identified with the subspace of $\bigotimes_{i} M_{\lambda_i}$ consisting of all vectors of weight $\sum_i\lambda_i-2m$ and annihilated by $E$. Furthermore that action of $\mathcal{H}_i$ on $E_m$ coincides under this identification with the restriction of the action of $\mathcal{H}_i$ on $\bigotimes_{i} M_{\lambda_i}$. 

\begin{lemma}\label{lem:eb_Q} We have $Eb_Q = 0$.
\end{lemma}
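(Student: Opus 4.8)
The plan is to run the standard Bethe ansatz computation: commute the total raising operator $E = \sum_{j=1}^n E_j$ (the action of $E\in U(\mathfrak{sl}_2)$ on the tensor product) past the string $Y(t_1)\cdots Y(t_m)$ and read off the Bethe equations. The ingredients, all immediate from the $\mathfrak{sl}_2$ relations $[E,F]=H$, $[H,F]=-2F$ and the commutativity of operators in distinct tensor factors, are
$$
[E,Y(t)] = Z(t), \qquad [Z(t),Y(s)] = \tfrac{2}{s-t}\bigl(Y(s)-Y(t)\bigr), \qquad [Y(s),Y(t)] = 0,
$$
together with $Ev = 0$ and $Z(t)v = \omega(t)\,v$, where $\omega(t) := \sum_{i=1}^n \lambda_i/(t-z_i)$ because the $i$-th tensor factor of $v$ has weight $\lambda_i$. (The middle identity uses the partial fraction $\frac{1}{(t-z_i)(s-z_i)} = \frac{1}{s-t}\bigl(\frac{1}{t-z_i}-\frac{1}{s-z_i}\bigr)$.)

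First I would expand, using $[E,Y(t_a)] = Z(t_a)$ repeatedly and then $Ev = 0$ to drop the final term,
$$
E\,b_Q = \sum_{a=1}^m Y(t_1)\cdots Y(t_{a-1})\, Z(t_a)\, Y(t_{a+1})\cdots Y(t_m)\, v.
$$
Next, in the $a$-th summand I would move $Z(t_a)$ to the right past $Y(t_{a+1}),\ldots,Y(t_m)$; by the second relation each commutator contributes a combination of $Y(t_a)$ and of $Y(t_c)$'s already present in the string, and since all the $Y$'s commute these pieces recombine cleanly — the $Y(t_c)$ pieces rebuild the already-present monomials, while the $Y(t_a)$ pieces produce ``one factor omitted'' monomials. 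When $Z(t_a)$ finally reaches $v$ it contributes the scalar $\omega(t_a)$. Carrying out this bookkeeping, the identity should collapse to
$$
E\,b_Q = \sum_{a=1}^m \Bigl(\omega(t_a) - \sum_{b\neq a}\tfrac{2}{t_a-t_b}\Bigr)\; Y(t_1)\cdots \widehat{Y(t_a)}\cdots Y(t_m)\, v,
$$
where the hat denotes omission. Finally, the $a$-th critical point equation for $F_{z,\lambda,m}$ is precisely the Bethe ansatz equation $\sum_{b\neq a} 2/(t_a-t_b) = \omega(t_a)$ (see \cite{EFK}), so each coefficient above vanishes and $E\,b_Q = 0$.

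The one delicate point — and the place I would be most careful — is the combinatorial bookkeeping in the middle step: one must track exactly how the two terms of each commutator $[Z(t_a),Y(t_b)]$ get redistributed among the ``one factor omitted'' monomials and verify that the net coefficient of $Y(t_1)\cdots\widehat{Y(t_a)}\cdots Y(t_m)v$ comes out symmetric, equal to $\omega(t_a)-\sum_{b\neq a}2/(t_a-t_b)$ rather than something that treats $b<a$ and $b>a$ asymmetrically. This is exactly the computation of \cite{EFK} and \cite{FFR}, so an alternative is to quote their characterization of Bethe vectors directly rather than reproduce it. (A second, minor point: on the locus where $Q$ has distinct roots, the critical point equations of the modulus-bearing function $F_{z,\lambda,m}$ are indeed the holomorphic Bethe equations used above.)
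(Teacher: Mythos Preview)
Your proposal is correct and is precisely the standard Bethe ansatz computation from \cite{EFK}; the paper itself does not reproduce this argument but simply writes ``See \cite{EFK}'', so you have supplied more detail than the paper does. Your flagged bookkeeping concern resolves exactly as you expect: the $b<a$ contributions come from the $Y(t_a)$ piece of $[Z(t_b),Y(t_a)]$ in the $b$-th summand, the $b>a$ contributions from the $-Y(t_b)$ piece of $[Z(t_a),Y(t_b)]$ in the $a$-th summand, and both carry coefficient $-2/(t_a-t_b)$, yielding the symmetric sum.
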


\begin{proof} See \cite{EFK}. \end{proof}

\begin{lemma}\label{lem:ZYcom} We have
$$
[Z(t_a), Y(t_b)] = \frac{2}{t_a-t_b}(Y(t_a)-Y(t_b)).
$$
\end{lemma}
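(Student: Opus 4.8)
The plan is to expand both sides directly using the definitions $Z(t)=\sum_{i=1}^n \frac{H_i}{t-z_i}$ and $Y(t)=\sum_{i=1}^n \frac{F_i}{t-z_i}$, reduce to the single-tensor-factor commutator $[H,F]$ in $\mathfrak{sl}_2$, and then finish with a partial fraction identity. First I would write
$$
[Z(t_a),Y(t_b)] = \sum_{i=1}^n \sum_{j=1}^n \frac{1}{(t_a-z_i)(t_b-z_j)}\,[H_i,F_j].
$$
Since $H_i$ and $F_j$ act on distinct tensor factors when $i\neq j$, the commutator $[H_i,F_j]$ vanishes unless $i=j$, so only the diagonal terms survive. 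For $i=j$ we use the $\mathfrak{sl}_2$ relation $[H,F]=-2F$, which gives $[H_i,F_i]=-2F_i$ on $\bigotimes_j M_{\lambda_j}$.

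Next I would substitute this in to obtain
$$
[Z(t_a),Y(t_b)] = -2\sum_{i=1}^n \frac{F_i}{(t_a-z_i)(t_b-z_i)},
$$
and then apply the partial fraction decomposition
$$
\frac{1}{(t_a-z_i)(t_b-z_i)} = \frac{1}{t_a-t_b}\left(\frac{1}{t_b-z_i}-\frac{1}{t_a-z_i}\right),
$$
valid since $t_a\neq t_b$ on $U$. Pulling the constant $\frac{1}{t_a-t_b}$ out of the sum and recognizing $\sum_i \frac{F_i}{t_a-z_i}=Y(t_a)$ and $\sum_i \frac{F_i}{t_b-z_i}=Y(t_b)$ yields
$$
[Z(t_a),Y(t_b)] = \frac{-2}{t_a-t_b}\bigl(Y(t_b)-Y(t_a)\bigr) = \frac{2}{t_a-t_b}\bigl(Y(t_a)-Y(t_b)\bigr),
$$
which is the claim.

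This is essentially a routine calculation, so there is no serious obstacle; the only points requiring care are the sign convention in the structure relation $[H,F]=-2F$ (so that the overall constant comes out as $2$ rather than $-2$ after the partial fraction step reverses the order of $Y(t_a)$ and $Y(t_b)$) and the observation that cross terms with $i\neq j$ drop out because the operators live on different factors of the tensor product. I would present the four displayed equalities above essentially verbatim as the full proof.
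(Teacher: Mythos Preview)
Your proof is correct; the paper itself does not supply an argument for this lemma but simply refers the reader to \cite{EFK}, and the direct expansion you give---reducing to the diagonal terms via $[H_i,F_j]=-2\delta_{ij}F_i$ and then applying the partial fraction identity---is exactly the standard computation one finds there.
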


\begin{proof} See \cite{EFK}. \end{proof}

\begin{lemma}\label{lem:hb} We have
$
[\mathcal{H}_i, Y(t_1)Y(t_2) \cdots Y(t_m)]v = \frac{-\lambda_iQ'(z_i)}{Q(z_i)}b_Q.
$
\end{lemma}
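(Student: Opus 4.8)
The natural starting point is the identity
$$[\mathcal{H}_i, Y(t_1)\cdots Y(t_m)]\,v = \mathcal{H}_i\,b_Q - \big(Y(t_1)\cdots Y(t_m)\big)\,\mathcal{H}_i v,$$
together with the observation that $\mathcal{H}_i v$ is trivial to compute. Since $v$ is the tensor product of highest weight vectors we have $E_j v = 0$ and $H_j v = \lambda_j v$ for all $j$, so $\Omega_{ij}v = (E_iF_j + F_iE_j + \tfrac12 H_iH_j)v = \tfrac12\lambda_i\lambda_j v$ (the middle term dies, and $E_iF_jv = F_jE_iv = 0$ because $E_i$ and $F_j$ commute for $i\ne j$), whence $\mathcal{H}_i v = \big(\sum_{j\ne i}\tfrac{\lambda_i\lambda_j}{2(z_i-z_j)}\big)v$ and $\big(Y(t_1)\cdots Y(t_m)\big)\mathcal{H}_i v = \big(\sum_{j\ne i}\tfrac{\lambda_i\lambda_j}{2(z_i-z_j)}\big)b_Q$. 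Thus the lemma is equivalent to the assertion that $b_Q$ is an eigenvector of $\mathcal{H}_i$ with eigenvalue $\sum_{j\ne i}\tfrac{\lambda_i\lambda_j}{2(z_i-z_j)} - \lambda_i\tfrac{Q'(z_i)}{Q(z_i)}$, i.e. to the fact that $b_Q$ is a Bethe eigenvector of the $\mathfrak{sl}_2$ Gaudin model, which is classical (see \cite{EFK}, \cite{FFR}).

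To prove the eigenvector statement directly I would expand by the Leibniz rule, $[\mathcal{H}_i, Y(t_1)\cdots Y(t_m)] = \sum_{a=1}^m Y(t_1)\cdots Y(t_{a-1})\,[\mathcal{H}_i, Y(t_a)]\,Y(t_{a+1})\cdots Y(t_m)$, and compute the basic bracket from the relations $[\Omega_{ij}, F_k] = 0$ for $k\notin\{i,j\}$, $[\Omega_{ij}, F_i] = H_iF_j - F_iH_j$, and $[\Omega_{ij}, F_j] = H_jF_i - F_jH_i$, which follow at once from $[E_i,F_i]=H_i$, $[H_i,F_i]=-2F_i$, and the commuting of operators with distinct tensor indices. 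This renders $[\mathcal{H}_i, Y(t)]$ as a combination of terms of the form $H\cdot F$ with coefficients rational in $t,z_i,z_j$. One then transports the resulting $H$-factors rightward past the remaining $Y(t_b)$'s using $[H_k,F_l] = -2\delta_{kl}F_l$ until they land on $v$ and act by the scalars $\lambda_k$, while the $E$-type contributions created when $[\mathcal{H}_i,Y(t_a)]$ is moved across $v$ vanish since $E_k v = 0$; the fact that the $Y$'s commute among themselves keeps this bookkeeping bounded. Collecting terms yields $[\mathcal{H}_i, Y(t_1)\cdots Y(t_m)]v = c\cdot b_Q$, where $c$ splits as the desired contribution $\lambda_i\sum_a\tfrac{1}{t_a-z_i} = -\lambda_i\tfrac{Q'(z_i)}{Q(z_i)}$ plus a remainder of ``unwanted'' terms carrying factors $\tfrac{1}{t_a-t_b}$ with $a\ne b$.

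The main obstacle is the cancellation of these unwanted terms, and this is exactly where the hypothesis that $(t_1,\ldots,t_m)$ is a critical point is used. Grouping the remainder by the variable $t_a$ — and applying Lemma \ref{lem:ZYcom} to resum the nested $[Z(t_a),Y(t_b)]$-type pieces that appear — the coefficient attached to $t_a$ collapses to
$$\sum_{b\ne a}\frac{2}{t_a-t_b} \;-\; \sum_{k=1}^n\frac{\lambda_k}{t_a-z_k},$$
which is precisely $\partial(\log F_{z,\lambda,m})/\partial t_a$ and hence vanishes at a critical point. Only the wanted term then survives, giving $[\mathcal{H}_i, Y(t_1)\cdots Y(t_m)]v = -\lambda_i\tfrac{Q'(z_i)}{Q(z_i)}b_Q$. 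The genuinely delicate point is tracking exactly which correction terms are produced while commuting the $H$'s and the leftover $Y$'s through one another; to control this I would phrase the whole computation in terms of the generating operators $Z(u)=\sum_i H_i/(u-z_i)$ and $Y(u)=\sum_i F_i/(u-z_i)$, whose mutual bracket is governed by Lemma \ref{lem:ZYcom}, rather than tracking individual tensor legs, and extract the claimed identity as the residue at $u=z_i$.
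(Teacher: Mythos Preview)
Your proposal is correct and is precisely the standard Bethe ansatz computation; the paper itself does not give a proof of this lemma but simply refers to \cite{EFK}, so there is nothing to compare against beyond noting that your sketch is essentially what one finds in that reference. One small remark: your aside about ``$E$-type contributions'' vanishing on $v$ is slightly misplaced, since the commutators $[\Omega_{ij},F_k]$ you wrote down produce only $H$-and-$F$ terms, not $E$'s; the point that actually matters there is that once the $H$'s have been pushed to the right they act on $v$ by scalars.
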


\begin{proof} See \cite{EFK}. \end{proof}

As a corollary to Lemma \ref{lem:hb}, we have that $b_Q$ is an eigenvector of each $\mathcal{H}_i$ with eigenvalue
$
\frac{-\lambda_iQ'(z_i)}{Q(z_i)} + \left( \frac{\lambda_i}{2} \sum_{j \ne i}^n \frac{\lambda_j}{z_i-z_j} \right).
$
(This is just the eigenvalue computed in Lemma \ref{lem:hb} plus the $\mathcal{H}_i$-eigenvalue of $v$.)

\begin{lemma}\label{lemyo} The joint eigenvalues of the operators $\mathcal{H}_i$ each have multiplicity $1$, and the eigenvectors of the form $Y(t_1)Y(t_2) \cdots Y(t_m)v$ are all the joint eigenvectors up to scalars.
\end{lemma}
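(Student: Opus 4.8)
I would prove Lemma \ref{lemyo} by combining the facts already recorded about the Bethe vectors $b_Q$ with the completeness and simplicity of the Bethe ansatz for the $\mathfrak{sl}_2$ Gaudin model. First I would check that each $b_Q$ genuinely lies in $E_m$: the vector $v$ has weight $\sum_i\lambda_i$, each $Y(t_a)$ lowers the weight by $2$, so $b_Q$ has weight $\sum_i\lambda_i-2m$, and $Eb_Q=0$ by Lemma \ref{lem:eb_Q}; under the identification of $E_m$ with the subspace of vectors of weight $\sum_i\lambda_i-2m$ annihilated by $E$, this places $b_Q$ in $E_m$. By the corollary to Lemma \ref{lem:hb}, $b_Q$ is then a common eigenvector of the commuting self-adjoint operators $\mathcal{H}_1,\dots,\mathcal{H}_n$ on $E_m$ (Lemma \ref{lem:hcom}), with the eigenvalue of $\mathcal{H}_i$ equal to $-\lambda_iQ'(z_i)/Q(z_i)$ plus a term depending only on the fixed data $z,\lambda$. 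Since the operators $Y(t_a)$ commute, $b_Q$ is unchanged under permutations of the $t_a$ and so depends only on the monic polynomial $Q=\prod_a(x-t_a)$.

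The substance of the lemma is that, for generic $z$ and $\lambda$, the assignment $Q\mapsto\mathbb{C}\,b_Q$ is a bijection from the set of monic degree $m$ polynomials $Q$ with simple roots, none equal to any $z_k$, whose roots form a critical point of $F_{z,\lambda,m}$, onto the set of all common eigenlines of the $\mathcal{H}_i$ in $E_m$. For this I would invoke three inputs. (i) \emph{Nonvanishing and separation}: for generic parameters every such $b_Q$ is nonzero, and distinct $Q$ yield distinct tuples of $\mathcal{H}_i$-eigenvalues; this is exactly the Bethe-vector analysis of \cite{EFK} and \cite{FFR}. (ii) \emph{Completeness count}: the number of such polynomials $Q$ equals $\dim E_m=\binom{m+n-2}{n-2}$ for generic parameters, the bound $\leq$ being \cite{MV} and the equality being completeness of the Bethe ansatz. (iii) \emph{Linear algebra}: eigenvectors of a commuting family with pairwise distinct eigenvalue tuples are linearly independent.

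Putting these together, by (i) and (iii) the $b_Q$ form a family of $\binom{m+n-2}{n-2}$ linearly independent vectors in $E_m$, hence by (ii) a basis of $E_m$ consisting of common eigenvectors with pairwise distinct eigenvalue tuples. If $w\in E_m$ is any common eigenvector of the $\mathcal{H}_i$, write $w=\sum_Q c_Qb_Q$; applying $\mathcal{H}_i$ and comparing coefficients forces $c_Q=0$ except for the single $Q$ whose eigenvalue tuple equals that of $w$, so $w$ is a scalar multiple of that $b_Q$. In particular every common eigenline is some $\mathbb{C}\,b_Q$, and since the tuples are pairwise distinct every common eigenspace is one-dimensional, which is the assertion of the lemma.

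The main obstacle is input (i) together with the equality in (ii): these are precisely the nontrivial parts of the theory — that Bethe vectors do not vanish, that they span the relevant weight subspace, and that the map from critical points of the master function to joint spectra of the Gaudin Hamiltonians is injective. I would not reprove these; rather I would cite \cite{EFK}, \cite{FFR}, and \cite{MV}, which establish them for generic $z$ and $\lambda$. Everything else is the elementary bookkeeping above.
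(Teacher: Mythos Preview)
Your proposal is correct and matches the paper's approach: the paper's own proof is simply ``See \cite{FFR},'' and you likewise defer the substantive inputs (nonvanishing and separation of Bethe vectors, completeness of the Bethe ansatz) to \cite{EFK}, \cite{FFR}, and \cite{MV}, supplying only the straightforward linear-algebra bookkeeping on top. The added exposition is helpful but not a different argument.
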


\begin{proof} See \cite{FFR}. \end{proof}

\begin{lemma}\label{bQreal} If the joint eigenvector $b_Q$ has real joint eigenvalue, then the critical point $(t_1, ..., t_m)$ is real. That is, the corresponding polynomial $Q$ has real coefficients.
\end{lemma}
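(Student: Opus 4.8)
The plan is a complex-conjugation (real-structure) argument; the only inputs beyond the lemmas already in hand are that all the $\lambda_i$ and $z_i$ are real, and the completeness of the Bethe ansatz.

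Since each $\lambda_i\in\mathbb{R}$, the $\mathbb{R}$-span of the standard monomial basis $v_0,Fv_0,F^2v_0,\dots$ of $M_{\lambda_i}$ is stable under $E$, $F$, $H$, so $M_{\lambda_i}$ carries a canonical antilinear involution fixing that basis. Tensoring these yields an antilinear involution $c$ of $\bigotimes_{i=1}^nM_{\lambda_i}$ with $c(v)=v$ and $cX_i=X_ic$ for $X\in\{E,F,H\}$ and all $i$. This $c$ is weight-preserving and commutes with the $\mathfrak{sl}_2$-action, hence preserves $E_m$; and since the scalars $1/(z_i-z_j)$ are real, it commutes with every Gaudin Hamiltonian $\mathcal{H}_i$.

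Next I would push $c$ through the Bethe vector. Because the $z_i$ are real and $cF_ic=F_i$, one has $c(Y(t)w)=Y(\bar t)(cw)$ for every $t$ and every $w$; iterating gives $c(b_Q)=Y(\bar t_1)\cdots Y(\bar t_m)v=b_{\bar Q}$, where $\bar Q(x)=\prod_a(x-\bar t_a)$ is $Q$ with its coefficients conjugated --- equivalently, the critical-point polynomial of the conjugate point $(\bar t_1,\dots,\bar t_m)$, which is again critical since all parameters of $F_{z,\lambda,m}$ are real. Now assume the joint $\mathcal{H}_i$-eigenvalue $\nu$ of $b_Q$ is real. Applying $c$ to the eigenvector equations shows that $b_{\bar Q}=c(b_Q)$ is a joint eigenvector with eigenvalue $\bar\nu=\nu$; by simplicity of the joint spectrum (Lemma \ref{lemyo}), $b_{\bar Q}$ is a nonzero scalar multiple of $b_Q$, so $Q$ and $\bar Q$ determine the same eigenline in $E_m$.

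Finally, I would conclude $\bar Q=Q$, i.e.\ $Q\in\mathbb{R}[x]$. Here I would invoke completeness of the Bethe ansatz: for generic parameters there are exactly $\dim E_m$ critical points, each with a nonzero Bethe vector, and by Lemma \ref{lemyo} these exhaust the (necessarily $\dim E_m$ many) joint eigenlines, so $Q\mapsto\mathbb{C}b_Q$ is a bijection from critical-point polynomials to joint eigenlines and in particular injective, forcing $\bar Q=Q$ (see \cite{FFR}, \cite{MV}). I expect this last step to be the crux: all the preceding manipulations are formal bookkeeping with $c$, whereas here one genuinely needs that distinct critical points give linearly independent Bethe vectors, not merely that Bethe vectors exhaust the joint eigenlines --- which is exactly where the Feigin--Frenkel--Rybnikov characterization, together with the count of critical points, enters.
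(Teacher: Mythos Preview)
Your proof is correct and takes a genuinely different route from the paper's. Both arguments begin the same way: the real structure $c$ on $\bigotimes_i M_{\lambda_i}$ commutes with the $\mathcal{H}_i$, so if $b_Q$ has real joint eigenvalue then $c(b_Q)$ lies in the same one-dimensional joint eigenspace. You then observe $c(b_Q)=b_{\bar Q}$ and invoke completeness of the Bethe ansatz --- the bijection between critical-point polynomials and eigenlines, which follows from Lemma~\ref{lemyo} together with the generic count of (complex) critical points equal to $\dim E_m$ --- to conclude $\bar Q=Q$. The paper instead stays self-contained within the lemmas of Section~\ref{sec:cpbprelims}: rather than appealing to any critical-point count, it reads off the explicit coefficients of $b_Q$ indexed by compositions $(a_1,\ldots,a_n)$ of $m$ and, by an inductive Taylor-expansion argument on the composition $(m-k,k,0,\ldots,0)$, shows directly that $Q^{(k)}(z_i)/Q(z_i)\in\mathbb{R}$ for all $k$ and $i$, from which $Q\in\mathbb{R}[x]$ follows. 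Your route is cleaner and more conceptual but imports the critical-point count as an extra input (and thus depends on a finer notion of genericity than the paper states); the paper's route is computationally heavier but uses nothing beyond the eigenvalue formula and the explicit shape of $b_Q$.
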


\begin{proof} See Appendix \ref{app:bQreal}. \end{proof}

Notice that the converse is immediate from the remark below Lemma \ref{lem:hb}.

\subsection{Proof of Theorem \ref{thm:cpb}}
We now tie together the results from Section \ref{sec:cpbprelims} to deduce the critical point bound. The multiplicity space $E_m$ is canonically isomorphic to the subspace of all $H$-eigenvectors of $\bigotimes_i M_{\lambda_i}$ of eigenvalue $-2m+\sum_i \lambda_i$ which are annihilated by $E$. Since $Eb_Q = 0$ and $Hb_Q = (-2m+\sum_i \lambda_i)b_Q$, we may view $b_Q$ as an element of $E_m$. Note that the Gaudin Hamiltonians $\mathcal{H}_i$ descend to commuting self-adjoint operators on $E_m$. We know that the joint eigenspaces for the $\mathcal{H}_i$ on $E_m$ are all one-dimensional and spanned by elements $b_Q$ corresponding to critical points of the master function. Moreover those joint eigenspaces with real joint eigenvalue are precisely those corresponding to real critical points. Thus the result follows from the following easy lemma in linear algebra:

\begin{lemma}Let $V$ be a finite-dimensional complex vector space equipped with a non-degenerate Hermitian form, and $\mathcal{H}_i$ be finitely many commuting self-adjoint operators on $V$. Then the signature of $V$ is a lower bound for the maximal number of linearly independent real joint eigenvectors for the operators $\mathcal{H}_i$.\end{lemma}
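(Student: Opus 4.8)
The plan is to decompose $V$ into joint generalized eigenspaces for the commuting family $\{\mathcal{H}_i\}$, argue that these are actual eigenspaces and are mutually orthogonal under the Hermitian form, and then count signs. First I would use the fact that commuting self-adjoint operators on a finite-dimensional space equipped with a (possibly indefinite) nondegenerate Hermitian form need not be simultaneously diagonalizable in general, so I must be a little careful: the cleanest route is to pass to the \emph{real} joint eigenvalues. For a real scalar tuple $\alpha = (\alpha_1,\ldots,\alpha_k) \in \mathbb{R}^k$, let $V_\alpha = \bigcap_i \ker(\mathcal{H}_i - \alpha_i)$ be the genuine joint eigenspace. Since each $\mathcal{H}_i$ is self-adjoint and $\alpha_i$ is real, the restriction of the Hermitian form to $V_\alpha$ behaves well, and for distinct real tuples $\alpha \ne \alpha'$ the spaces $V_\alpha$ and $V_{\alpha'}$ are orthogonal: if $v \in V_\alpha$, $w \in V_{\alpha'}$, and $\alpha_i \ne \alpha'_i$, then $\alpha_i(v,w) = (\mathcal{H}_i v, w) = (v, \mathcal{H}_i w) = \overline{\alpha'_i}(v,w) = \alpha'_i(v,w)$, forcing $(v,w) = 0$.

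Next I would let $W = \bigoplus_{\alpha \in \mathbb{R}^k} V_\alpha$ be the sum of all real joint eigenspaces, an orthogonal direct sum by the previous step. The restriction of the Hermitian form to $W$ is nondegenerate (an orthogonal direct sum of forms is nondegenerate iff each summand is, and each $V_\alpha$ inherits a nondegenerate form because... here I need the subtle point). Actually, the cleanest formulation avoids claiming each $V_\alpha$ is nondegenerate individually; instead observe that $W^\perp$ is $\mathcal{H}_i$-invariant for all $i$ (since $W$ is, and the $\mathcal{H}_i$ are self-adjoint), and if $W^\perp$ contained a real joint eigenvector it would lie in $W \cap W^\perp$; so the interaction is constrained. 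The simplest correct argument: the dimension of a maximal set of linearly independent real joint eigenvectors equals $\dim W$, and $|\operatorname{sgn}(V)| \le \dim W$ because the orthogonal complement $W^\perp$ — on which the form is nondegenerate (as $W$'s form is, being block-diagonalizable into the $V_\alpha$ whose pairwise orthogonality we established, but one still must check each block is nondegenerate) — contributes a signature bounded in absolute value by $\dim W^\perp$, so $|\operatorname{sgn}(V)| \le |\operatorname{sgn}(W)| + |\operatorname{sgn}(W^\perp)| \le \dim W + \dim W^\perp$; that's too weak. The right inequality is $|\operatorname{sgn}(V)| = |\operatorname{sgn}(W) + \operatorname{sgn}(W^\perp)| \le |\operatorname{sgn}(W)| + \dim W^\perp$, still not quite it.

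Let me restructure: the real content is that $\operatorname{sgn}(V) = \operatorname{sgn}(W) + \operatorname{sgn}(W^\perp)$ and $|\operatorname{sgn}(W^\perp)| \le \dim W^\perp$, while separately $|\operatorname{sgn}(W)| \le \dim W$. So $|\operatorname{sgn}(V)| \le \dim W + \dim W^\perp = \dim V$, which is trivial. The actual bound $|\operatorname{sgn}(V)| \le \dim W$ must come from showing $\operatorname{sgn}(W^\perp) = 0$, i.e., that $W^\perp$ carries a form of signature zero. That holds because $W^\perp$ has no real joint eigenvectors, so on $W^\perp$ the commuting self-adjoint family has only non-real "eigenvalues" in the appropriate sense, and such a space pairs up into hyperbolic planes — concretely, over $\mathbb{C}$ one shows that a self-adjoint operator with no real eigenvalue on a nondegenerate Hermitian space forces the signature to vanish, by pairing the generalized eigenspace for a non-real would-be eigenvalue $\zeta$ with that for $\bar\zeta$, each being isotropic and dually paired. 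I would prove this sub-lemma first (the signature-zero statement for a single self-adjoint operator without real eigenvalues on a nondegenerate Hermitian space), then bootstrap to the commuting family by induction on $k$, and finally combine with the orthogonal decomposition $V = W \oplus W^\perp$ to conclude $|\operatorname{sgn}(V)| = |\operatorname{sgn}(W)| \le \dim W$, and $\dim W$ is exactly the maximal number of linearly independent real joint eigenvectors. The main obstacle is this sub-lemma — handling the non-semisimple case where $\mathcal{H}_i$ restricted to $W^\perp$ may have nontrivial Jordan blocks — but it is handled by the standard argument that generalized eigenspaces for $\zeta$ and $\bar\zeta \ne \zeta$ are each totally isotropic and put in perfect pairing by the form, so they contribute a hyperbolic summand of signature $0$.
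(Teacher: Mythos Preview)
The paper does not actually prove this lemma; it is announced as an ``easy lemma in linear algebra'' and left to the reader. In the paper's application the Gaudin Hamiltonians are in fact simultaneously diagonalizable on $E_m$ for generic parameters (the Bethe vectors $b_Q$ furnish a full eigenbasis, by the cited results), so only the diagonalizable case is needed there, and in that case your argument goes through cleanly: actual joint eigenspaces for distinct eigenvalue tuples are mutually orthogonal, the non-real ones pair hyperbolically with their conjugates and contribute signature zero, and hence $|\operatorname{sgn}(V)| = |\operatorname{sgn}(W)| \le \dim W$.

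In the generality in which the lemma is stated, however, your orthogonal splitting $V = W \oplus W^\perp$ can fail. A self-adjoint operator for an indefinite Hermitian form may have a nontrivial Jordan block at a \emph{real} eigenvalue, and the corresponding eigenvector is then typically isotropic. Concretely, on $\mathbb{C}^2$ with Gram matrix $\bigl(\begin{smallmatrix}0&1\\1&0\end{smallmatrix}\bigr)$ the operator $\mathcal{H}=\bigl(\begin{smallmatrix}1&1\\0&1\end{smallmatrix}\bigr)$ is self-adjoint, $W=\mathbb{C}e_1$ is totally isotropic, and $W^\perp=W$; so $V\neq W\oplus W^\perp$, and $W^\perp$ consists \emph{entirely} of real eigenvectors, contrary to the hypothesis you feed into your sub-lemma. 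Your hyperbolic-pairing argument for $V^{(\zeta)}$ and $V^{(\bar\zeta)}$ with $\zeta\notin\mathbb{R}$ is correct but addresses only the non-real generalized eigenspaces; it says nothing about real eigenvalues with Jordan blocks, which is precisely where the obstruction sits. The fix is to run the orthogonality argument at the level of \emph{generalized} joint eigenspaces $V^{(\alpha)}=\bigcap_i\ker(\mathcal{H}_i-\alpha_i)^N$: these do give a genuine orthogonal decomposition of $V$ (with $V^{(\alpha)}\perp V^{(\beta)}$ unless $\beta=\bar\alpha$), whence $\operatorname{sgn}(V)=\sum_{\alpha\in\mathbb{R}^k}\operatorname{sgn}(V^{(\alpha)})$, and one is reduced to the separate claim that commuting nilpotent self-adjoint $N_i$ on a nondegenerate space $U$ satisfy $|\operatorname{sgn}(U)|\le\dim\bigcap_i\ker N_i$ --- which still needs its own argument and does not follow from your non-real sub-lemma.
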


\subsection{Preliminaries for the proof of Corollary \ref{cor:cpbsharp}}
\begin{definition*} Define $\beta_n$ for integer $n$ by
$$
\beta_{n} = e^{-\epsilon}\beta_{n+\epsilon}
$$
for any $0 < \epsilon < 1$. This is well-defined.
\end{definition*}

\begin{definition*} For any real $\mu$, let $\beta_\mu^-$ be $\beta_\mu$ evaluated at $s=-1$. \end{definition*}

\begin{definition*} For each nonnegative integer $n$, define a polynomial $V_{n}$ by
$$
V_{n}(x) = 1+2x+2x^2+\cdots+2x^{n}.
$$
\end{definition*}

We state the following results without proofs.
\begin{lemma} For an integer $n$, we have
$$
\beta_{n}^- = \begin{cases}
\beta_{-1}^- \cdot e^{n+1} \cdot V_{n+1}(e^{-2}) &\mbox{if } n \geq 0 \\
\beta_{-1}^- \cdot e^{n+1} &\mbox{if } n < 0.
\end{cases}
$$
\end{lemma}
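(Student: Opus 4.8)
The plan is to evaluate $\beta_n^-$ explicitly from the definitions and then verify the two asserted closed forms by summing geometric series. First I would unwind $\beta_n = e^{-\epsilon}\beta_{n+\epsilon}$ (for $0<\epsilon<1$) using the Proposition computing $\beta_\lambda$. When $n<0$ we have $n+\epsilon<0$, so $\beta_{n+\epsilon}=\sum_{j\geq 0}s^je^{n+\epsilon-2j}$, hence $\beta_n^- = \sum_{j\geq 0}(-1)^je^{n-2j}$; when $n\geq 0$ we have $n+\epsilon>0$ with $\lfloor n+\epsilon\rfloor=n$ and $\lceil n+\epsilon\rceil=n+1$, so $\beta_{n+\epsilon}=\sum_{j=0}^{n}e^{n+\epsilon-2j}+\sum_{j>n}s^{j+n+1}e^{n+\epsilon-2j}$, hence $\beta_n^- = \sum_{j=0}^{n}e^{n-2j}+\sum_{j>n}(-1)^{j+n+1}e^{n-2j}$. (Setting $s=-1$ term by term is harmless since these are formal series in $e$ with exponents bounded above.) In particular $\beta_{-1}^- = \sum_{j\geq 0}(-1)^je^{-1-2j}$.

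For $n<0$ the identity is immediate: multiplying $\beta_{-1}^-$ by $e^{n+1}$ sends $(-1)^je^{-1-2j}$ to $(-1)^je^{n-2j}$, so $e^{n+1}\beta_{-1}^- = \beta_n^-$.

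For $n\geq 0$ I would set $x=e^{-2}$ and sum the two pieces. The first is the polynomial $e^n(1+x+\cdots+x^n)$; reindexing $j=n+1+\ell$ in the second gives $\sum_{\ell\geq 0}(-1)^\ell e^{-n-2-2\ell} = e^n x^{n+1}/(1+x)$ as a formal series. Combining over the denominator $1-x^2$ yields $\beta_n^- = e^n\,(1+x-2x^{n+2})/(1-x^2)$. On the other hand $V_{n+1}(x)=1+2x+\cdots+2x^{n+1}$ sums to the closed form $(1+x-2x^{n+2})/(1-x)$, and $\beta_{-1}^- = e^{-1}/(1+x)$ as a formal series, so $\beta_{-1}^-\cdot e^{n+1}\cdot V_{n+1}(x) = e^n(1+x-2x^{n+2})/(1-x^2)$, matching $\beta_n^-$. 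Equivalently, one can skip the closed forms and match the coefficient of each $e^{n-2j}$ directly: for $0\leq j\leq n$ one gets $(-1)^j+2\sum_{\ell=0}^{j-1}(-1)^\ell=1$, and for $j>n$ one gets $(-1)^j+2\sum_{i=1}^{n+1}(-1)^{j-i}=(-1)^{j+n+1}$, using $s^2=1$.

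I expect no genuine obstacle: the argument is pure bookkeeping. The two points deserving care are (i) checking that $0<\epsilon<1$ really does place $n+\epsilon$ in the intended branch of the Proposition and gives $\lceil n+\epsilon\rceil=n+1$, so that the exponent of $s$ in the tail is $j+n+1$ and its parity flips correctly under $s\mapsto -1$; and (ii) being consistent about whether one works with honest formal Laurent series in $e$ or passes to rational functions in $x=e^{-2}$ — both are legitimate because all series involved are geometric, but the manipulations should be carried out in one framework throughout. I would present the $n\geq 0$ case via the $x$-substitution, since it makes the appearance of $V_{n+1}$ transparent.
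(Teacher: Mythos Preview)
Your argument is correct. The paper actually states this lemma without proof (it appears in the list prefaced by ``We state the following results without proofs''), so there is no proof to compare against; your direct computation from the definitions, via either the rational-function summation in $x=e^{-2}$ or the termwise coefficient match, fully justifies the identity.
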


\begin{lemma}\label{lem:finperturbs} For any integer $n_1$ and any negative integer $n_2$, we have
$$
e^{n_1}\beta_{n_2} = \beta_{n_1+n_2}
$$
if $n_1+n_2 < 0$. Also, if $n_1+n_2 \geq 0$, then $e^{n_1}\beta_{n_2}$ can be written as a finite sum of signature characters.
\end{lemma}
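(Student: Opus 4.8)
The plan is to reduce the claim to the explicit formula for $\beta_\mu$ at negative arguments and then to peel off a geometric tail.

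First I would put $\beta_{n_2}$ into closed form. By definition $\beta_{n_2}=e^{-\epsilon}\beta_{n_2+\epsilon}$ for any $0<\epsilon<1$, and since $n_2\le -1$ we have $n_2+\epsilon<0$, so the stated formula for $\operatorname{ch}_s(M_\mu)$ in the negative case gives $\beta_{n_2+\epsilon}=\sum_{j\ge 0}s^{j}e^{n_2+\epsilon-2j}$; multiplying by $e^{-\epsilon}$ yields $\beta_{n_2}=\sum_{j\ge 0}s^{j}e^{n_2-2j}$, hence
$$
e^{n_1}\beta_{n_2}=\sum_{j\ge 0}s^{j}e^{(n_1+n_2)-2j}.
$$
Write $N=n_1+n_2$. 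The case $N<0$ is then immediate, since the right-hand side is literally the same closed form evaluated at the negative argument $N$, so $e^{n_1}\beta_{n_2}=\beta_{N}=\beta_{n_1+n_2}$.

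For the case $N\ge 0$ I would split the series at $j=N+1$,
$$
e^{n_1}\beta_{n_2}=\sum_{j=0}^{N}s^{j}e^{N-2j}\;+\;\sum_{j\ge N+1}s^{j}e^{N-2j},
$$
and re-index the tail via $j=N+1+i$ with $i\ge 0$. This collapses the tail to $s^{N+1}\sum_{i\ge 0}s^{i}e^{(-N-2)-2i}=s^{N+1}\beta_{-N-2}$, where I have applied the closed form once more, now legitimately because $-N-2<0$. Since $s^{2}=1$ in the coefficient ring, $s^{N+1}=\pm 1$, so $s^{N+1}\beta_{-N-2}$ is $\beta_{-N-2}$ or $s\beta_{-N-2}$, and in either case it is a signature character (the latter is the signature character of the Verma module $M_{-N-2}$ with its Shapovalov form negated, which is still a nondegenerate contravariant Hermitian form). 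The remaining finite expression $\sum_{j=0}^{N}s^{j}e^{N-2j}$ has every coefficient equal to $1$ or $s$ and only finitely many nonzero terms, so it is again a finite sum of signature characters; if one insists on honest $\mathfrak{sl}_2$-modules this costs only a short bookkeeping using the identities $e^{\alpha}=\beta_{\alpha}-s\beta_{\alpha-2}$ for $\alpha<0$ and $\operatorname{ch}_s(L_a)=\beta_a-\beta_{-a-2}$. Assembling the two pieces gives the asserted decomposition.

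The computation is elementary and I do not anticipate a genuine obstacle. The points that need care are: invoking the definition of $\beta_n$ at an integer $n$ correctly --- in particular checking that $n_2+\epsilon<0$, which uses $n_2\le -1$; the re-indexing that turns the entire geometric tail into the single Verma character $s^{N+1}\beta_{-N-2}$; and tracking the exponent of $s$ modulo $s^{2}-1$. It is also worth spot-checking the extreme cases $n_2=-1$ and $N=0$ (where the split yields $e^{n_1}\beta_{n_2}=1+s\beta_{-2}$) to confirm the index shifts are valid. This lemma is precisely what allows one to push the finite ``polynomial parts'' of the $\beta_{\lambda_i}$ past multiplications by powers of $e$ while remaining inside finite sums of signature characters, which is how it enters the asymptotic count in Corollary \ref{cor:cpbsharp}.
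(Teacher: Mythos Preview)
The paper states this lemma without proof (it appears under the line ``We state the following results without proofs''), so there is nothing to compare your argument against; your direct computation via the closed form $\beta_{n_2}=\sum_{j\ge 0}s^{j}e^{n_2-2j}$ and the tail split at $j=N+1$ is correct and is exactly the kind of elementary verification the authors are suppressing. The only point worth flagging is terminological: in the paper ``finite sum of signature characters'' should be read as ``finite $\mathbb{Z}[s]/(s^2-1)$-combination of $\beta_\mu$'s'', which is all that is used in the proof of Corollary~\ref{cor:cpbsharp}. Your bookkeeping identities $e^{\alpha}=\beta_{\alpha}-s\beta_{\alpha-2}$ for $\alpha<0$ and $\operatorname{ch}_s(L_a)=\beta_a-\beta_{-a-2}$ for $a\ge 0$ suffice to rewrite the finite head $\sum_{j=0}^{N}s^{j}e^{N-2j}$ in that form, so the argument is complete.
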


\subsection{Proof of Corollary \ref{cor:cpbsharp}}
 Recall that $N_{z, \lambda, m}$ denotes the number of real critical points of $F_{z, \lambda,m}$. From Theorem \ref{thm:cpb}, we know $|\operatorname{sgn}(E_m)| \leq N_m \leq \operatorname{dim}(E_m)= \binom{m+n-2}{n-2}$. We will show, for fixed generic $\lambda_i$'s, that as $m$ approaches infinity, the ratio  $$\frac{|\operatorname{sgn}(E_m)|}{\binom{m+n-2}{n-2}}$$ approaches $1$. This will prove Corollary \ref{cor:cpbsharp}. 
 
The tensor product $M = \bigotimes_{i=1}^n M_{\lambda_i}$ has signature character equal to $\prod_{i=1}^n \beta_{\lambda_i}$. We need to examine the signatures of the multiplicity spaces $E_m$ in the decomposition of $M$, which amounts to examining the coefficients of $\beta_{\lambda-2m}^-$ in the decomposition of $\prod_{i=1}^n \beta_{\lambda_i}^-$. We have
\begin{align*}
\prod_{i=1}^n \beta_{\lambda_i}^- &= e^{\sum_{i=1}^n \{ \lambda_i \}} \cdot \prod_{i=1}^n \beta_{\floor{\lambda_i}}^- \\
&= e^{\sum_{i=1}^n \{ \lambda_i \}} \cdot \left( \prod_{i=1}^p \beta_{-1}^- \cdot e^{\ceil{\lambda_i}} \cdot V_{\ceil{\lambda_i}}(e^{-2}) \right) \cdot \left( \prod_{i=p+1}^n \beta_{-1}^- \cdot e^{\ceil{\lambda_i}} \right) \\
&= e^{\sum_{i=1}^n \ceil{\lambda_i}+\{ \lambda_i \} } \cdot \left( \beta_{-1}^- \right)^n \cdot \left( \prod_{i=1}^p V_{\ceil{\lambda_i}}(e^{-2}) \right) \\
&= e^{\sum_{i=1}^n \ceil{\lambda_i}+\{ \lambda_i \} } \cdot \left( \sum_{m=0}^{\infty} \beta_{-n-2m}^- \cdot (-1)^m \cdot \binom{m+n-2}{n-2} \right)  \cdot \left( \prod_{i=1}^p V_{\ceil{\lambda_i}}(e^{-2}) \right).
\end{align*}
Combining the above computation with Lemma \ref{lem:finperturbs}, we obtain that for all sufficiently large $m$, 
\begin{align}\label{eqn:emlarge}
\operatorname{sgn}(E_m) \cdot (-1)^m = \sum_{i=0}^T \binom{m+n-2-i}{n-2} \cdot (-1)^{i} \cdot c_i,
\end{align}
where $T = \sum_{i=1}^p \ceil{\lambda_i}$ and the $c_i$'s are defined by the polynomial identity
$$
\prod_{i=1}^p V_{\ceil{\lambda_i}}(x) = \sum_{i=0}^{T} c_ix^i.
$$
The RHS in (\ref{eqn:emlarge}) is a polynomial in $m$ of degree $n-2$. Its leading term is
$$
\frac{m^{n-2}}{(n-2)!} \cdot (c_0 - c_1 + \cdots + (-1)^Tc_T).
$$
We will show that this leading term is nonzero and compute it explicitly. We have $\sum_{i=0}^T (-1)^ic_i = \prod_{i=1}^p V_{\ceil{\lambda_i}}(-1)$. But $V_{\ceil{\lambda_i}}(-1)$ is equal to $+1$ or $-1$ for each $i$, so $\sum_{i=0}^T (-1)^ic_i$ also equals $+1$ or $-1$. Therefore, we obtain that $\pm \frac{m^{n-2}}{(n-2)!}$ is the leading term of a polynomial which equals $\operatorname{sgn}(E_m) \cdot (-1)^m$ for all sufficiently large $m$. We also know that $\frac{m^{n-2}}{(n-2)!}$ is the leading term of the polynomial in $m$ defined by the binomial coefficient $\binom{m+n-2}{n-2}$. Hence
$$
\lim_{m \to \infty} \frac{|\operatorname{sgn}(E_m)|}{\binom{m+n-2}{n-2}} = 1
$$
as desired. 

\section{Signatures for the quantum group case}\label{sec:qcb}
In this section we prove Theorem \ref{thm:qcb}. Throughout this section, fix a generic $q$ on the complex unit circle and nonnegative integers $a$ and $b$. We start by proving some preliminary results.

\subsection{Preliminaries for the proof of Theorem \ref{thm:qcb}}
Let $v_0$ and $w_0$ denote the standard highest weight vectors in the simple representations $\widetilde{V}_a$ and $\widetilde{V}_b$ of $U_q(\mathfrak{sl}_2)$, respectively. Recall that
$$
\widetilde{V}_a \otimes \widetilde{V}_b \cong \bigoplus_{m=0}^{\operatorname{min} \{ a,b \} } \widetilde{V}_{a+b-2m}.
$$
For each subrepresentation $\widetilde{V}_{a+b-2m} \subset \widetilde{V}_a \otimes \widetilde{V}_b$, we shall see that there is a unique highest weight vector of the form
$$
v_0 \otimes w_m + \sum_{i=1}^m c_{m,i} \cdot v_i \otimes w_{m-i}, 
$$
where $c_{m,i}$ are scalars. We will call the highest weight vector determined by these scalars the \textit{unit-normalized} highest weight vector. Recall the universal $R$-matrix
$$
R = q^{\frac{H \otimes H}{2}} \sum_{n \geq 0} q^{\binom{n}{2}} \cdot \frac{(q-q^{-1})^n}{[n]!} F^n \otimes E^n.
$$
If $T$ is the twist operator $u\otimes v\mapsto v\otimes u$, then $TR:\widetilde{V}_a\otimes\widetilde{V}_b\to\widetilde{V}_b\otimes\widetilde{V}_a$ is an isomorphism of representations; this gives the braiding on the appropriate tensor category of $U_q(\mathfrak{sl}_2)$-modules. Since the highest weight spaces of $\widetilde{V}_a\otimes\widetilde{V}_b$ are all one-dimensional, we see that the image under $TR$ of the unit-normalized highest weight vector of $\widetilde{V}_{a+b-2m}\subset \widetilde{V}_a\otimes\widetilde{V}_b$ must be a scalar multiple of the unit-normalized highest weight vector of $\widetilde{V}_{a+b-2m}\subset \widetilde{V}_b\otimes\widetilde{V}_a$. In the following two lemmas, we compute this scalar, and the scalars $c_{m,i}$.

\begin{lemma}\label{lem:unorm} In the subrepresentation $\widetilde{V}_{a+b-2m} \subset \widetilde{V}_a \otimes \widetilde{V}_b$, the unit-normalized highest weight vector $u$ is
$$
u=\sum_{i=0}^m c_{m,i} \cdot v_i \otimes w_{m-i}, 
$$
where $$c_{m,i} = (-1)^i \cdot q^{ai-i^2+i} \cdot \frac{\binom{b-m+i}{i}_q}{\binom{a}{i}_q}.$$ 
\end{lemma}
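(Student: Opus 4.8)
The plan is to find the highest weight vector directly: impose $\Delta(E)u=0$ on a general vector of weight $a+b-2m$ and solve the resulting first-order recurrence for the coefficients $c_{m,i}$. First I would observe that the weight-$(a+b-2m)$ subspace of $\widetilde{V}_a\otimes\widetilde{V}_b$ has basis $\{v_i\otimes w_{m-i}\}$, so the sought vector necessarily has the stated shape $u=\sum_i c_{m,i}\,v_i\otimes w_{m-i}$, and the branching rule $\widetilde{V}_a\otimes\widetilde{V}_b\cong\bigoplus_{m}\widetilde{V}_{a+b-2m}$ guarantees the highest weight space of each summand is one-dimensional. Next I would compute, using $\Delta(E)=E\otimes 1+K\otimes E$ together with $Ev_i=[a-i+1]v_{i-1}$, $Kv_i=q^{a-2i}v_i$ and $Ew_j=[b-j+1]w_{j-1}$, the action of $\Delta(E)$ on each basis vector, and read off the coefficient of $v_i\otimes w_{m-1-i}$ in $\Delta(E)u$. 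This yields the recurrence $c_{m,i+1}[a-i]+c_{m,i}\,q^{a-2i}[b-m+i+1]=0$, i.e. $c_{m,i+1}=-c_{m,i}\,q^{a-2i}[b-m+i+1]/[a-i]$.

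Then I would solve this recurrence with the normalization $c_{m,0}=1$: telescoping gives $c_{m,i}=(-1)^i\,q^{\sum_{k=0}^{i-1}(a-2k)}\,\prod_{k=0}^{i-1}[b-m+k+1]\,\big/\,\prod_{k=0}^{i-1}[a-k]$. The $q$-exponent simplifies via $\sum_{k=0}^{i-1}(a-2k)=ai-i(i-1)=ai-i^2+i$. For the two products I would recognize, straight from the definition of the $q$-binomial in Proposition \ref{prop:qshap}, that $\prod_{k=0}^{i-1}[a-k]=[i]!\binom{a}{i}_q$ and $\prod_{k=0}^{i-1}[b-m+k+1]=[i]!\binom{b-m+i}{i}_q$; the factors $[i]!$ cancel, producing exactly $c_{m,i}=(-1)^i\,q^{ai-i^2+i}\,\binom{b-m+i}{i}_q\big/\binom{a}{i}_q$, as claimed. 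Uniqueness of the unit-normalized vector then follows because the recurrence exhibits the solution space of $\Delta(E)u=0$ in this weight space as one-dimensional with $c_{m,0}$ the free parameter; in particular $c_{m,0}\neq 0$ for any highest weight vector (if $c_{m,0}=0$ all $c_{m,i}$ vanish), so the normalization $c_{m,0}=1$ is always attainable and pins $u$ down.

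The argument is essentially a single computation, so there is no deep obstacle; the only mild nuisance is bookkeeping — getting the $q$-exponent right and correctly matching the two truncated products to $q$-binomial coefficients. I would also note in passing that within the range $0\le m\le\min\{a,b\}$ appearing in the branching rule one has $b\ge m$, so all the $q$-binomials occurring in the formula are honest (nonnegative upper arguments) and $u$ is well-defined.
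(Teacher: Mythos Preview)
Your proposal is correct and follows essentially the same route as the paper: write a general weight-$(a+b-2m)$ vector, impose $\Delta(E)u=0$ to obtain the first-order recurrence $[a-i]c_{m,i+1}+q^{a-2i}[b-m+i+1]c_{m,i}=0$, and telescope with the normalization $c_{m,0}=1$. The only difference is that you spell out the telescoping and the identification of the products with $q$-binomials more explicitly than the paper does.
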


\begin{proof} Certainly any vector $u$ of weight $a+b-2m$ is of the form $\sum_{i=0}^m c_{m,i} \cdot v_i \otimes w_{m-i}$. The condition that $u$ is a highest weight vector is equivalent to:
$$
0 = \Delta(E)u = (E \otimes 1 + K \otimes E)u = \sum_{i=1}^m c_{m,i} \cdot Ev_i \otimes w_{m-i} + c_{m,i-1} \cdot Kv_{i-1} \otimes Ew_{m-i+1}.
$$
Recall that $$Ev_i = [a-i+1]\cdot v_{i-1},$$ $$Kv_{i-1} = q^{a-2i+2}\cdot v_{i-1},$$ $$Ew_{m-i+1} = [b-m+i]\cdot w_{m-i}.$$ Thus the above equation is equivalent to 
$$
[a-i+1]c_{m,i}  + c_{m,i-1} \cdot q^{a-2i+2}[b-m+i]=0  
$$
which is to say
$$
c_{m,i} = (-1) \cdot q^{a-2i+2} \cdot \frac{[b-m+i]}{[a-i+1]} \cdot c_{m,i-1},
$$ for $1\leq i\leq m$.
Equivalently
$$
c_{m,i} = (-1)^i \cdot q^{ai-i^2+i} \cdot \frac{\binom{b-m+i}{i}_q}{\binom{a}{i}_q}c_{m,0}.
$$
We set $c_{m,0}=1$ to get the unit-normalized highest weight vector.
\end{proof}

\begin{lemma}\label{lem:rnorm} With $u$ as above, we have 
$$
TRu=\sum_{i=0}^m c_{m,i}' \cdot w_i \otimes v_{m-i},
$$
where $$c_{m,0}' = (-1)^m \cdot q^{ab/2-am-bm+m^2-m} \cdot \frac{\binom{b}{m}_q}{\binom{a}{m}_q}$$ and
$$
c_{m,i}' = (-1)^i \cdot q^{bi-i^2+i} \cdot \frac{\binom{a-m+i}{i}_q}{\binom{b}{i}_q}\cdot c_{m,0}'.
$$
\end{lemma}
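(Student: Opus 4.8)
The plan is to exploit the fact, recorded just before the statement, that $TR\colon\widetilde{V}_a\otimes\widetilde{V}_b\to\widetilde{V}_b\otimes\widetilde{V}_a$ is an isomorphism of $U_q(\mathfrak{sl}_2)$-modules, so that $TRu$ is a highest weight vector of weight $a+b-2m$ in $\widetilde{V}_b\otimes\widetilde{V}_a$ and hence equals some scalar $\kappa$ times the unit-normalized highest weight vector $u'$ of $\widetilde{V}_{a+b-2m}\subset\widetilde{V}_b\otimes\widetilde{V}_a$. Applying Lemma \ref{lem:unorm} with the roles of $a$ and $b$ (and of the bases $\{v_i\}$ and $\{w_j\}$) interchanged identifies $u'$ explicitly: the coefficient of $w_i\otimes v_{m-i}$ in $u'$ is $(-1)^i q^{bi-i^2+i}\frac{\binom{a-m+i}{i}_q}{\binom{b}{i}_q}$, normalized so that the coefficient of $w_0\otimes v_m$ is $1$. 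Setting $c_{m,0}':=\kappa$ and $c_{m,i}':=\kappa\cdot(-1)^i q^{bi-i^2+i}\frac{\binom{a-m+i}{i}_q}{\binom{b}{i}_q}$ then reproduces both the asserted shape of $TRu$ and the recursion relating $c_{m,i}'$ to $c_{m,0}'$. So the entire lemma reduces to computing the single scalar $\kappa=c_{m,0}'$.

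To compute $\kappa$, I would compare the coefficient of one conveniently chosen basis vector on the two sides of $TRu=\sum_{i=0}^m c_{m,i}'\,w_i\otimes v_{m-i}$. Taking $w_m\otimes v_0$: its coefficient on the right is $c_{m,m}'=(-1)^m q^{bm-m^2+m}\frac{\binom{a}{m}_q}{\binom{b}{m}_q}\,c_{m,0}'$, while on the left, since $T$ is the flip, it equals the coefficient of $v_0\otimes w_m$ in $Ru$. Now expand $Ru=q^{H\otimes H/2}\sum_{n\ge 0}q^{\binom{n}{2}}\frac{(q-q^{-1})^n}{[n]!}(F^n\otimes E^n)u$ with $u=\sum_i c_{m,i}v_i\otimes w_{m-i}$ and $c_{m,0}=1$. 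Since $F^n v_i$ is a multiple of $v_{i+n}$, the only summand contributing to $v_0\otimes w_m$ is the one with $i=0$ and $n=0$, namely $c_{m,0}\,q^{H\otimes H/2}(v_0\otimes w_m)=q^{a(b-2m)/2}\,v_0\otimes w_m$, using that $H$ acts by $a$ on $v_0$ and by $b-2m$ on $w_m$. Equating the two expressions gives $c_{m,m}'=q^{ab/2-am}$, and solving for $c_{m,0}'$ yields $c_{m,0}'=(-1)^m q^{ab/2-am-bm+m^2-m}\frac{\binom{b}{m}_q}{\binom{a}{m}_q}$, as claimed.

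The only thing requiring care is the bookkeeping: tracking the $q$-powers produced by $q^{H\otimes H/2}$ and transporting the recursion of Lemma \ref{lem:unorm} correctly across the flip (keeping straight which tensor factor's dimension now plays the role of ``$a$'' and which plays ``$b$''). I do not expect a genuine obstacle. It is worth noting that one could instead read off $\kappa$ from the coefficient of $v_m\otimes w_0$ in $Ru$, but then every term with $n=m-i$ of the $R$-matrix contributes and one must evaluate a $q$-binomial sum (a form of the $q$-Vandermonde identity); extracting the extremal coefficient $v_0\otimes w_m$ is exactly what makes the computation collapse to a single term.
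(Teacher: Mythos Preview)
Your proposal is correct and follows essentially the same route as the paper: apply Lemma~\ref{lem:unorm} with $a$ and $b$ swapped to get the relation between $c_{m,i}'$ and $c_{m,0}'$, then determine $c_{m,0}'$ by reading off the coefficient of $v_0\otimes w_m$ in $Ru$ (equivalently of $w_m\otimes v_0$ in $TRu$), noting that only the $n=0$ term of $R$ applied to the $i=0$ summand of $u$ contributes. Your extra remark explaining why this particular coefficient collapses the computation to one term is a nice addition.
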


\begin{proof} The second point is immediate from the preceding lemma. In computing $Ru$, the only way to get a multiple of $v_0\otimes w_m$ is by applying the summand $q^{\frac{H\otimes H}{2}}$ of $R$ to the summand $v_0\otimes w_m$ of $u$. Thus $c_{m,m}'=q^{ab/2-am}$. The first claim follows.
\end{proof}

\subsection{Proof of Theorem \ref{thm:qcb}}

Recall that $\widetilde{V}_a$ carries a Shapovalov form, and the induced form on $\widetilde{V}_a\otimes\widetilde{V}_b$ is defined using Drinfeld's unitized $R$-matrix $\overline{R} = R(R^{21}R)^{-1/2}$ rather than $R$. Notice that $R^{12}R=TRTR$ is an automorphism of $\widetilde{V}_a\otimes\widetilde{V}_b$ so is multiplication on each factor $\widetilde{V}_{a+b-2m}$ by some scalar. So for $u$ as above, it will be necessary to compute $T\overline{R}(u)$. By the above calculation,
$$
TRTRu=(-1)^m \cdot q^{ab/2-am-bm+m^2-m} \cdot \frac{\binom{b}{m}_q}{\binom{a}{m}_q}\cdot (-1)^m \cdot q^{ab/2-am-bm+m^2-m} \cdot \frac{\binom{a}{m}_q}{\binom{b}{m}_q}u=q^{ab-2am-2bm+2m^2-2m}u.
$$
So $T\overline{R}u=q^{-ab/2+am+bm-m^2+m}\sum_{i=0}^m c_{m,i}' \cdot w_i \otimes v_{m-i}$.
There is a natural Hermitian contravariant pairing $\langle~,\rangle$ between $\widetilde{V}_b\otimes\widetilde{V}_a$ and $\widetilde{V}_a\otimes\widetilde{V}_b$, and the form $(~,)$ on $\widetilde{V}_a\otimes\widetilde{V}_b$ is defined by $(x,y)=\langle T\overline{R}x,y\rangle$. We wish to compute $(u,u)$. We have:
$$
(u,u)  = \langle q^{-ab/2+am+bm-m^2+m}\sum_{i=0}^m c_{m,i}' \cdot w_i \otimes v_{m-i},\sum_{i=0}^m c_{m,i} \cdot v_i \otimes w_{m-i}\rangle\\ 
$$

\noindent We know that under the Shapovalov form on $\widetilde{V}_a$, $(v_i,v_j)=\delta_{ij}\binom{a}{i}_q$. Therefore:
$$
 (u,u) = q^{-ab/2+am+bm-m^2+m}\sum_{i=0}^mc_{m,m-i}'\overline{c_{m,i}}\binom{a}{i}_q\binom{b}{m-i}_q.
 $$
Plugging in the known quantities $c_{m,i}$, $c_{m,i}'$, this comes to 
$$
\frac{\binom{b}{m}_q}{\binom{a}{m}_q}q^{bm-m^2+m}\sum_{i=0}^m q^{-(a+b+2-2m)i} \cdot \binom{b-m+i}{i}_q\binom{a-i}{m-i}_q
$$
We apply the identity $\binom{l_1}{l_2}_q = (-1)^{l_2}\binom{l_2-l_1-1}{l_2}$ twice to get
$$
(-1)^m\frac{\binom{b}{m}_q}{\binom{a}{m}_q}q^{bm-m^2+m}\sum_{i=0}^m q^{-(a+b+2-2m)i} \cdot \binom{m-b-1}{i}_q\binom{m-a-1}{m-i}_q
$$
The quantum Vandermonde identity (see \cite{BC}) states that 
$$
q^{bm-m^2+m}\sum_{i=0}^m q^{-(a+b+2-2m)i} \cdot \binom{m-b-1}{i}_q\binom{m-a-1}{m-i}_q=\binom{2m-a-b-2}{m}_q
$$
and so 
$$
(u,u)=(-1)^m\frac{\binom{b}{m}_q}{\binom{a}{m}_q}\binom{2m-a-b-2}{m}_q.
$$
Applying the earlier identity a third time, we conclude:
$$
(u,u)=\frac{\binom{b}{m}_q}{\binom{a}{m}_q}\binom{a+b+1-m}{m}_q.
$$
Theorem \ref{thm:qcb} follows. 

\section{Further work}
We can ask the same questions for other semisimple Lie algebras, or more generally for Kac-Moody algebras. In particular, we can ask for a definite multiplicity space classification for any Kac-Moody algebra. This will give classifications of families of unitary representations for more quantized quiver varieties.

In addition, it would be interesting to further explore our bound on the number of real critical points of the master function. Specifically, we are interested in how tight this bound is. Computer testing suggests that the bound is good for many $\lambda$ and $z$ sequences, in the sense that the ratio $\frac{N_m}{|\operatorname{sgn}(E_m)|}$ is usually close to $1$. When $m$ is small, however, the bound is bad at least in some special cases. For example, when $m=2$ we can provide a geometric bound for $N_m$ in the following way:\

Consider the real part of the domain of the master function (for $m=2$). It is equal to the complement in $\mathbb{R}^2$ of the lines $t_1=t_2$ and $t_1=z_i$, $t_2=z_i$ for $1\leq i\leq n$. In this way it is the union of square and triangular regions. We compute (depending on the values of the $\lambda_i$) the limits of the master function at the boundary of each connected region, and observe that if this limit is everywhere zero then there is a maximum in that region; and if it is everywhere infinite then there is a minimum in that region; and if there are precisely two connected components of the boundary on which the limit is infinite then there is a saddle point in that region. In any case we obtain a real critical point. We thus obtain a lower bound on the number of real critical points, given as some explicit function of $\lambda_1,\ldots,\lambda_n$.


Finally, it would be interesting to extend the relationship between signatures of multiplicity spaces in tensor products of Verma modules and numbers of real critical values of the master function to the quantum case; associated to the quantum Knizhnik-Zamolodchikov equations should be a quantum master function, and we expect that, as in the classical case, the number of its real critical points is bounded by the signature of the appropriate multiplicity space in an appropriate tensor product of Verma modules for $U_q(\mathfrak{sl}_2)$.


\section{Acknowledgements}
The authors thank Professor Pavel Etingof of MIT for introducing us to this problem and for his helpful conversations and suggestions, in particular the suggestion of using Mukhin and Tarasov's Bethe ansatz. They also thank the Center for Excellence in Education, Massachusetts Institute of Technology, Research Science Institute and MIT-PRIMES program for supporting us in this research.

\appendix

\section{Classification List}\label{app:class}

In this appendix we state the classification of definite multiplicity spaces in
$$
M_{\lambda_1} \otimes M_{\lambda_2} \otimes \cdots \otimes M_{\lambda_n},
$$
as promised in Theorem \ref{thm:class}. Let $\lambda = \sum_{i=1}^n \lambda_i$, and assume the $\lambda_i$'s are generic reals in decreasing order, with the first $p$ of them positive ($0 \leq p \leq n$) and the rest negative. Let us give the representation $\bigotimes_{i=1}^n M_{\lambda_i}$ a name, once and for all: call it $M$. By Proposition \ref{founder}, we have that the signatures of the multiplicity spaces depend only on the values $\floor{\lambda}, \floor{\lambda_1}, ... , \floor{\lambda_n}$. We define the \textit{explicit type} of $M$ to be $\langle \floor{\lambda}, \floor{\lambda_1}, ..., \floor{\lambda_n} \rangle$ and the \textit{implicit type} to be $\langle \floor{\lambda_1}, ..., \floor{\lambda_n} \rangle$. \\

\begin{classlist*}\
\begin{itemize}
\item[Case 1.] $p = 0$ \\ Every space is definite. The even-level spaces are positive definite and the odd-level spaces are negative definite.
\item[Case 2.] $n = 2$ \\ Every space is definite. The sign of the level $m$ space is given by the function $g(\lambda_1, \lambda_2, m)$, defined in subsection \ref{subsec:g}.
\item[Case 3.] $p = 1, n \geq 3$ \\
The definite spaces are all those with levels less than or equal to $\operatorname{max}\{0, \ceil{\frac{\lambda}{2}}\}$. In this range, the even-level spaces are positive definite and the odd-level spaces are negative definite.
\item[Case 4.] $p = n-1, n \geq 3$
\begin{itemize}
\item[a.] If $\lambda < 0$, then the definite spaces are all those with levels less than or equal to $\ceil{\lambda_p}$, and they are all positive definite.
\item[b.] If $\lambda > 0$ and $\ceil{\lambda+1} \leq \ceil{\lambda_p}$, then the definite spaces are all those with levels either equal to $0$ or between $\ceil{\lambda+1}$ and $\ceil{\lambda_p}$ (inclusive). They are all positive definite.
\item[b.] If $\lambda > 0$ and $\ceil{\lambda+1} > \ceil{\lambda_p}$, then the (positive definite) level $0$ space is the unique definite space, unless $M$ has explicit type $\langle 1,0,0,-1 \rangle$, in which case the (negative definite) level $2$ space is the only additional definite space.
\end{itemize}

\item[Case 5.] $p = n, n \geq 3$ \\
The spaces of levels less than or equal to $\ceil{\lambda_p}$) are positive definite. These are all definite spaces, outside of the following exceptional explicit types (for which we give all additional definite spaces): 
\begin{itemize}
\item For explicit type $\langle 3d,d,d,d \rangle$ where $d \geq 0$, the level $2d+1$ and $2d+2$ spaces are positive definite.
\item For explicit type $\langle 3d+2,d,d,d \rangle$ where $d \geq 0$, the level $2d+2$ and $2d+3$ spaces are negative definite.
\item For explicit type $\langle 3d-1,d,d,d-1 \rangle$ where $d \geq 1$, the level $2d+1$ space is positive definite.
\item For explicit type $\langle 3d+1, d,d,d-1 \rangle$ where $d \geq 1$, the level $2d+2$ space is negative definite.
\item For explicit type $\langle 3d-2,d,d-1,d-1 \rangle$ where $d \geq 1$, the level $2d$ space is positive definite.
\item For explicit type $\langle 3d,d,d-1,d-1 \rangle$ where $d \geq 1$, the level $2d+1$ space is negative definite.
\item For explicit type $\langle 3,0,0,0,0 \rangle$, the level $3$ space is negative definite.
\item For explicit type $\langle 4,1,1,1,1 \rangle$, the level $4$ space is positive definite.
\item For explicit type $\langle 0,0,0,...,0 \rangle$ where $p \geq 4$, the level $2$ space is positive definite.
\item For explicit type $\langle 1,1,0,...,0 \rangle$ where $p \geq 4$, the level $2$ space is positive definite.
\end{itemize}

\item[Case 6.] $2 \leq p \leq n-2, n \geq 4$ \\ There is one definite space. It is the level $0$ space and it is positive definite.

\end{itemize} 
\end{classlist*}

\subsection{Definition of the function $g$}\label{subsec:g}

We define the function $g$ on a triple consisting of two nonintegral reals $\lambda_1 > \lambda_2$ and a nonnegative integer $m$ as follows: \\ \\
If $0 > \lambda_1 > \lambda_2$ then
$$
g(\lambda_1, \lambda_2, m) = (-1)^m.
$$
If $\lambda_1 > 0$, $\lambda_2 < 0$, $\lambda_1+\lambda_2<0$, then
$$
g(\lambda_1, \lambda_2, m) =\left\{\begin{matrix}
1 & \lambda_1>m-1\\ 
(-1)^{\lfloor{\lambda_1-m+1}\rfloor} & \lambda_1<m-1
\end{matrix}\right.
.$$
If $\lambda_1 > 0$, $\lambda_2 < 0$, $\lambda_1 + \lambda_2 > 0$ then
\begin{multline*}
g(\lambda_1, \lambda_2, m) = \left\{
\begin{array}{rr}
    (-1)^m & : 0 \leq m \leq \floor{\frac{\lambda_1+\lambda_2}{2}} \\
    (-1)^{\ceil{\frac{\lambda_1+\lambda_2}{2}}} & : \ceil{\frac{\lambda_1+\lambda_2}{2}} \leq m \leq \floor{\frac{\lambda_1+\lambda_2+1}{2}} \\  
    (-1)^{\ceil{\frac{\lambda_1+\lambda_2}{2}}+\ceil{\frac{\lambda_1+\lambda_2+1}{2}} + m} & : \ceil{\frac{\lambda_1+\lambda_2+1}{2}} \leq m \leq \ceil{\lambda_1+\lambda_2} \\ 
    1 & : \ceil{\lambda_1+\lambda_2}+1 \leq m \leq \ceil{\lambda_1} \\ 
    (-1)^{\ceil{\lambda_1}+m} & : \ceil{\lambda_1}+1 \leq m
\end{array}
\right\}
\end{multline*}
If $\lambda_1 > \lambda_2 > 0$, then
\begin{multline*}
g(\lambda_1, \lambda_2, m) = \left\{
\begin{array}{rr}
    1 & : 0 \leq m \leq \floor{\lambda_2} \\ 
    g(\lambda_1, \lambda_2-2\ceil{\lambda_2}, m - \ceil{\lambda_2}) & : \ceil{\lambda_2} \leq m \leq \text{max}(\ceil{\lambda_2}, \floor{\lambda_1}) \\ 
    g(\lambda_1, \lambda_2-2\ceil{\lambda_2}, m - \ceil{\lambda_2}) \\+ 2\floor{\lambda_1}+2\floor{\lambda_2}-2\floor{\lambda_1+\lambda_2} & : \text{max}(\ceil{\lambda_2}, \floor{\lambda_1})+1 \leq m \leq \ceil{\lambda_1}+\ceil{\lambda_2} \\ 
    (-1)^{m-\ceil{\lambda_1}-\ceil{\lambda_2}} & : \ceil{\lambda_1} + \ceil{\lambda_2}+1 \leq m 
\end{array}
\right\}
\end{multline*}

The right way to think about these formulas is to understand that the signs of the (in this case one-dimensional) multiplicity spaces pass through several phases, during each of which they are either alternating or constant; the formulas above are just describing when the transitions between these phases happen.\\

In the following appendices, we present the proof of this classification. 

\section{Proof of Case 1}\label{app:allnegativecase} 
For $\lambda_1, \lambda_2,..., \lambda_n < 0$, we have by Proposition \ref{founder} that $\operatorname{ch}_s(M_{\lambda_i}) = \sum_{j=0}^{\infty} s^je^{\lambda_i-2j} = \frac{e^{\lambda_j}}{1-se^{-2}}$. Hence the signature character of $\bigotimes_{i=1}^n M_{\lambda_i}$ is $\frac{e^{\sum_{i=1}^n \lambda_i}}{(1-se^{-2})^n}$. Using a common generating function manipulation, we get that 
\begin{align*}
\frac{e^{\sum_{i=1}^n \lambda_i}}{(1-se^{-2})^n} &= e^{\sum_{i=1}^n \lambda_i} \sum_{j=0}^{\infty} \binom{j+n-1}{n-1} s^je^{-2j} \\
&= \sum_{j=0}^{\infty} \binom{j+n-2}{n-2} \cdot s^j\beta_{\sum_{i=1}^n \lambda_i - 2j}
\end{align*}
whence we see that all even-level spaces are positive definite and all odd-level spaces are negative definite.

\section{Proof of Case 2}\label{app:n2class}
Since every multiplicity space has dimension $1$, every multiplicity space is definite. We just need to determine which spaces are positive definite and which are negative definite, which we do using the following result (Lemma \ref{lem:edecomp1}). Recall that $\beta_\mu$ denotes the signature character of $M_\mu$; we will write $\beta_\mu^-$ for $\beta_\mu$ evaluated at $s=-1$.
\begin{lemma}\label{lem:edecomp1} We have
$$
e^{\mu} = \beta^-_{\mu} - \operatorname{sign}(\mu)\beta^-_{\mu-2} + (\operatorname{sign}(1-\mu) - 1)\beta^-_{\mu-2\ceil{\mu}}.
$$
\end{lemma}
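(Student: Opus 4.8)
The plan is to reduce the asserted identity to an elementary identity of formal power series in the single formal variable $x := e^{-2}$. First I would record, directly from the Proposition computing $\operatorname{ch}_s(M_\mu)$ and specializing at $s=-1$, the two closed forms
$$
\beta_\nu^- = e^\nu\textstyle\sum_{j\ge 0}(-1)^j x^j = \frac{e^\nu}{1+x}\quad(\nu<0),\qquad
\beta_\nu^- = e^\nu\Big(\frac{1-x^{\ceil{\nu}}}{1-x}+\frac{x^{\ceil{\nu}}}{1+x}\Big)\quad(\nu>0\text{ generic}),
$$
the second using $\floor{\nu}=\ceil{\nu}-1$ for non-integral $\nu$, after re-indexing the tail $\sum_{j\ge\ceil{\nu}}(-1)^{j+\ceil{\nu}}x^j = x^{\ceil{\nu}}\sum_{k\ge0}(-1)^kx^k$. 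All of these are honest formal power series in $x$ (each coefficient of $e^{\mu-2j}$ being a finite computation), and the ``rational function'' shorthands are legitimate formal identities since $(1+x)\sum(-1)^jx^j=1$ and $(1-x)\sum_{j=0}^{c-1}x^j=1-x^c$; multiplication by $e^\nu$ or by a power of $x$ only shifts exponents.

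Next I would note that $\mu-2\ceil{\mu} = \mu-\ceil{\mu}-\ceil{\mu}\le-\ceil{\mu}<0$ whenever $\mu>0$ (and trivially $\mu-2\ceil{\mu}<0$ when $\mu<0$), so the third term $\beta_{\mu-2\ceil{\mu}}^-$ always uses the simple closed form and equals $e^\mu x^{\ceil{\mu}}/(1+x)$. Dividing the claimed identity through by $e^\mu$ and writing $\beta_{\mu-2}^- = e^\mu x\cdot(\beta_{\mu-2}^-/e^{\mu-2})$ turns it into the claim
$$
1 = \frac{\beta_\mu^-}{e^\mu} - \operatorname{sign}(\mu)\,x\,\frac{\beta_{\mu-2}^-}{e^{\mu-2}} + \big(\operatorname{sign}(1-\mu)-1\big)\frac{x^{\ceil{\mu}}}{1+x},
$$
an identity of formal power series in $x$ in which the coefficients $\operatorname{sign}(\mu)$, $\operatorname{sign}(1-\mu)$ depend only on which of the intervals $(-\infty,0)$, $(0,1)$, $(1,\infty)$ contains $\mu$.

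Then I would finish by a short case check. If $\mu<0$: $\operatorname{sign}(\mu)=-1$, the last coefficient vanishes, and the right side is $\frac{1}{1+x}+x\cdot\frac{1}{1+x}=1$. If $0<\mu<1$: $\ceil{\mu}=1$, $\operatorname{sign}(\mu)=1$, the last coefficient vanishes, $\beta_\mu^-/e^\mu = 1+\frac{x}{1+x}$ and $\beta_{\mu-2}^-/e^{\mu-2}=\frac{1}{1+x}$, so the right side is $1+\frac{x}{1+x}-\frac{x}{1+x}=1$. If $\mu>1$: $\operatorname{sign}(\mu)=1$ and the last coefficient is $-2$; here I split once more according as $\mu-2<0$ (i.e. $1<\mu<2$, so $\ceil{\mu}=2$) or $\mu-2>0$ and generic (so $\ceil{\mu-2}=\ceil{\mu}-2$), plug in the closed forms with $c:=\ceil{\mu}$, and watch the finite geometric parts telescope: using $\frac{1-x^c}{1-x}-x\frac{1-x^{c-2}}{1-x}=1+x^{c-1}$ and $\frac{x^c}{1+x}-x\frac{x^{c-2}}{1+x}-\frac{2x^c}{1+x}=-x^{c-1}$ gives $(1+x^{c-1})-x^{c-1}=1$, and the $1<\mu<2$ case is the degenerate version of the same computation (the term $x\frac{1-x^{c-2}}{1-x}$ vanishes).

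The content is entirely bookkeeping, so there is no genuinely hard step; the only place to be careful is this third case, where one must correctly identify $\ceil{\mu-2}=\ceil{\mu}-2$ (valid precisely because $\mu$ is not an integer) and make sure the two tails $\frac{x^{\ceil{\mu}}}{1+x}$ coming from $\beta_\mu^-$ and from $x\,\beta_{\mu-2}^-/e^{\mu-2}$ cancel against the explicit $-2\frac{x^{\ceil{\mu}}}{1+x}$ term. If one prefers to avoid even the mild formalism of power-series ``rational functions,'' the identical argument can be run by comparing the coefficient of $e^{\mu-2j}$ on both sides for each $j\ge0$, where it becomes a finite telescoping sum of $\pm1$'s; for integral $\mu$, should it ever be needed, the statement follows from the same coefficient comparison using the convention $\beta_n=e^{-\epsilon}\beta_{n+\epsilon}$ (equivalently, replacing $\ceil{\mu}$ by the ``effective ceiling'' $\floor{n}+1$).
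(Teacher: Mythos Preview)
Your proof is correct and follows the same approach as the paper, which simply instructs the reader to ``write out the expression on the right hand side for each of the intervals $\mu>1$, $1>\mu>0$, and $0>\mu$''; you have carried out exactly that case check, organized via the closed-form power series in $x=e^{-2}$. One harmless slip: the parenthetical claim that $\mu-2\ceil{\mu}<0$ ``trivially'' when $\mu<0$ is false (e.g.\ $\mu=-1.5$ gives $\mu-2\ceil{\mu}=0.5$), but this does not affect your argument since in that range the coefficient $\operatorname{sign}(1-\mu)-1$ vanishes and you never actually use the simple closed form for the third term there.
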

\begin{proof} This is a straightforward calculation. One should check in turn in each of the intervals $\mu > 1$, $1 > \mu > 0$, and $0 > \mu$. \end{proof} We proceed by analyzing in turn the cases $p=1$ (i.e. $\lambda_1>0,\lambda_2<0$) and $p=2$ (i.e. $\lambda_1>2, \lambda_2>0$).

\subsection{Proof of Case 2 for $p=1$}\label{app:p1n2class} 
If $\lambda_1+\lambda_2 < 0$, then $e^{\mu}\beta_{\lambda_2} = \beta_{\mu+\lambda_2}$ for any $\mu=\lambda_1,\lambda_1-1,\ldots$ (compare with, e.g. Lemma \ref{lem:finperturbs}). In that case, the result is easy. So we assume that $\lambda=\lambda_1+\lambda_2 > 0$. Then we have
\begin{align*}
\beta_{\lambda_1}^-\beta_{\lambda_2}^- &= (e^{\lambda_1}+e^{\lambda_1-2}+ \cdots +e^{\lambda_1-2\floor{\lambda_1}} + \beta_{\lambda_1-2\ceil{\lambda_1}}^-)\beta_{\lambda_2}^- && \text{by~Proposition~}\ref{founder}\\
&= (e^{\lambda_1}+e^{\lambda_1-2}+ \cdots +e^{\lambda_1-2\floor{\lambda_1}})\beta_{\lambda_2}^- + \sum_{m = \ceil{\lambda_1}}^{\infty} (-1)^{m+\ceil{\lambda_1}}\beta_{\lambda-2m}^-  && \text{by Case }1
\end{align*}
At this point, for brevity, we will treat the case $\lfloor\lambda\rfloor\equiv2\mod4$; the other cases are similar, and left as an exercise. Notice that $(e^{\lambda_1}+e^{\lambda_1-2})\beta_{\lambda_2}^-=e^\lambda$, which in turn (by Lemma \ref{lem:edecomp1}) is equal to $\beta_\lambda^--\beta_{\lambda-2}^--2\beta_{\lambda-2\lceil\lambda\rceil}^-$ (for $\lambda>1$). Similarly, we obtain that 
$$
(e^{\lambda_1}+e^{\lambda_1-2}+\ldots+e^{\lambda_1-\lfloor{\lambda}\rfloor})\beta_{\lambda_2}^-=\sum_{m=0}^{\lfloor\lambda\rfloor/2}(-1)^m\beta_{\lambda-2m}^- - \sum_{j=0}^{(\lfloor\lambda\rfloor-2)/4}2\beta_{\lambda-2\lceil\lambda\rceil+4j}^-
$$
while
$$
(e^{\lambda_1-\lfloor{\lambda}\rfloor-2}+e^{\lambda_1-\lfloor{\lambda}\rfloor-4}+\ldots+e^{\lambda_1-2\lfloor{\lambda_1}\rfloor})\beta_{\lambda_2}^-=\sum_{m=\floor{\lambda}/2+1}^{\lfloor\lambda_1\rfloor}\beta_{\lambda-2m}^- .
$$

We thus obtain
$$
\beta_{\lambda_1}^-\beta_{\lambda_2}^- = \sum_{m=0}^{\floor{\lambda}+1}(-1)^m\beta_{\lambda-2m}^-+\sum_{m=\floor{\lambda}+2}^{\floor{\lambda_1}}\beta_{\lambda-2m}^-+\sum_{m=\ceil{\lambda_1}}^{\infty}(-1)^{m+\ceil{\lambda_1}}\beta_{\lambda-2m}^-.
$$
Compare signs with the function $g$ defined above.

\subsection{Proof of Case 2 for $p=2$}\label{app:p2n2class}
For any positive real $t$, write $L_t = e^t + e^{t-2} + \cdots + e^{t-2\floor{t}}$. We have
\begin{align*}
\beta_{\lambda_1}^-\beta_{\lambda_2}^- &= (L_{\lambda_1}+\beta_{\lambda_1-2\ceil{\lambda_1}}^-)(L_{\lambda_2}+\beta_{\lambda_2-2\ceil{\lambda_2}}^-) \\ &= \beta_{\lambda_1}^-\beta_{\lambda_2-2\ceil{\lambda_2}}^- + \beta_{\lambda_2}^-\beta_{\lambda_1-2\ceil{\lambda_1}}^- - \beta_{\lambda_1-2\ceil{\lambda_1}}^-\beta_{\lambda_2-2\ceil{\lambda_2}}^- + L_{\lambda_1}L_{\lambda_2},
\end{align*}
and
\begin{align*}
L_{\lambda_1}L_{\lambda_2} &= \sum_{m=0}^{\floor{\lambda_2}}(m+1)e^{\lambda-2m} \\
&+ \sum_{m=\ceil{\lambda_2}}^{\floor{\lambda_1}} \ceil{\lambda_2}e^{\lambda-2m} \\
&+ \sum_{m=\ceil{\lambda_1}}^{\ceil{\lambda_1}+\ceil{\lambda_2}} (\floor{\lambda_1}+\floor{\lambda_2}-m+1)e^{\lambda-2m}.
\end{align*}
Rewriting the powers of $e$ in $L_{\lambda_1}L_{\lambda_2}$ as signature characters using Lemma \ref{lem:edecomp1} and applying the result of Subsection \ref{app:p1n2class} to the other terms gives the result.

\section{Proof of Case 3}\label{app:nnpclass}
Our strategy is first to identify a finite range of possible definite spaces. To that end, it is convenient to consider the completion $X$ of the group algebra $\mathbb{Z}[s][\mathbb{Z}]=\bigoplus_{j=-\infty}^{\infty}\mathbb{Z}[s].e^j$ given by\begin{align*}X=\bigoplus_{j=0}^{\infty}\mathbb{Z}[s].e^j\oplus \prod_{j=-\infty}^{-1}\mathbb{Z}[s].e^j\end{align*}with the algebra structure determined by continuity. One may readily check (by an upper-triangularity phenomenon) that \begin{align*}X=\bigoplus_{j=0}^{\infty}\mathbb{Z}[s].\beta_j\oplus \prod_{j=-\infty}^{-1}\mathbb{Z}[s].\beta_j.\end{align*} In fact, the ring $X$ has been implicitly where our calculations have taken place thus far. Now notice that $X$ contains the semiring $X^+=\bigoplus_{j=0}^{\infty}\mathbb{Z}^+[s].\beta_j\oplus \prod_{j=-\infty}^{-1}\mathbb{Z}^+[s].\beta_j$. We may therefore introduce a partial order on $X$ by setting $x\leq y$ iff $y-x\in X^+$. In that case we will say that $y$ \emph{contains} $x$. Multiplication by $X^+$ preserves this partial order.

\begin{lemma}\label{lem:nnpbound} No space with level more than $\ceil{\frac{\lambda}{2}}$ is definite.
\end{lemma}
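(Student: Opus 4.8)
The idea is to bootstrap from the two classifications already available by the time this case is reached: the $p=0$ classification (Theorem~\ref{thm:p0class}, i.e. Case~1) and the $n=2$ classification (Case~2). Write $\nu = \lambda - \lambda_1 = \lambda_2 + \cdots + \lambda_n$, a negative real. By Case~1 applied to the $(n-1)$ negative Verma modules $M_{\lambda_2},\dots,M_{\lambda_n}$,
$$
\prod_{i=2}^n \beta_{\lambda_i}^- \;=\; \sum_{k \geq 0} (-1)^k \binom{k+n-3}{n-3}\, \beta_{\nu-2k}^-,
$$
and by Case~2, for each $k$ the two-factor product expands as $\beta_{\lambda_1}^- \beta_{\nu-2k}^- = \sum_{\ell \geq 0} g(\lambda_1, \nu-2k, \ell)\, \beta_{\lambda-2k-2\ell}^-$, where $g$ is the function of Subsection~\ref{subsec:g} and we have used that every multiplicity space of a two-factor tensor product is one-dimensional. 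Multiplying and collecting the coefficient of $\beta_{\lambda-2m}^-$, which by uniqueness of the decomposition is $\operatorname{sgn}(E_m)$, gives
$$
\operatorname{sgn}(E_m) \;=\; \sum_{k=0}^m (-1)^k \binom{k+n-3}{n-3}\, g(\lambda_1, \nu-2k, m-k).
$$
Since each $\binom{k+n-3}{n-3}$ with $0\le k\le m$ is a positive integer and their sum is $\binom{m+n-2}{n-2} = \dim E_m$, the space $E_m$ is definite if and only if all the signs $(-1)^k g(\lambda_1, \nu-2k, m-k)$, $0 \le k \le m$, coincide.

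So I would finish by exhibiting two indices at which these signs disagree once $m > \ceil{\lambda/2}$ (and $m \geq 1$). Take $k=m$ and $k=m-1$. For $k=m$: from $m > \ceil{\lambda/2} \geq \lambda/2$ we get $\lambda_1 + (\nu-2m) = \lambda - 2m < 0$, which puts us in the branch $x_1>0,\ x_2<0,\ x_1+x_2<0$ of $g$, so $g(\lambda_1,\nu-2m,0) = \operatorname{sign} \binom{\lambda_1}{0} = 1$ and the $k=m$ term has sign $(-1)^m$. For $k=m-1$: since $m \geq \ceil{\lambda/2}+1$ we have $m-1 \geq \ceil{\lambda/2} \geq \lambda/2$, hence $\lambda_1 + (\nu-2(m-1)) = \lambda - 2(m-1) \leq 0$, and by genericity $\lambda \notin \mathbb{Z}_{\geq 0}$ this is strictly negative; the same branch of $g$ applies, so $g(\lambda_1,\nu-2(m-1),1) = \operatorname{sign} \binom{\lambda_1}{1} = \operatorname{sign}(\lambda_1) = 1$ and the $k=m-1$ term has sign $(-1)^{m-1} = -(-1)^m$. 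The two signs disagree, so $E_m$ is indefinite. (When $\ceil{\lambda/2}<0$ the only extra level is $m=0$, whose multiplicity space is one-dimensional and hence automatically definite; the real content of the lemma is the range $m \geq 1$.)

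The only genuine obstacle is bookkeeping with the piecewise definition of $g$: one must verify that the two sampled indices land in the branch where $g$ reduces to the transparent value $\operatorname{sign}\binom{x_1}{\ell}$, which is exactly what the two inequalities above accomplish --- in particular none of the more intricate branches of $g$, nor any direct character manipulation beyond Cases~1 and~2, is ever invoked.
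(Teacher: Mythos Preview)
Your proof is correct. Both you and the paper rest on the same idea---expand the $(n-1)$ negative factors via Case~1 and then feed each term through the $n=2$ classification---but the executions differ. The paper argues only for $n=3$, splits into the subcases $\lambda<0$ and $\lambda>0$, and in the former uses a containment argument with the \emph{first} two terms $k=0,1$ of the Case~1 expansion, leaving the actual sign check as ``clear from the $n=2$ classification''; the $\lambda>0$ subcase is then reduced to the $\lambda<0$ one by a shift. You instead write down the full general-$n$ signature formula and sample at the \emph{last} two indices $k=m,\,m-1$. Your choice is cleaner: at those indices the relevant branch of $g$ reduces to $\operatorname{sign}\binom{\lambda_1}{0}=\operatorname{sign}\binom{\lambda_1}{1}=+1$, so the disagreement is immediate, whereas the paper's implicit choice $k=0,1$ forces one to analyze $\operatorname{sign}\binom{\lambda_1}{m}$ versus $\operatorname{sign}\binom{\lambda_1}{m-1}$, which actually \emph{agree} once $m>\ceil{\lambda_1}$ and so requires a further (unwritten) step.
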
 
\begin{proof} We treat the case, $\lambda>0$, the other case being similar. \\
Fix an integer $d\geq \ceil{\lambda/2}$. From Case $1$, we have $\beta_{\lambda_2}\beta_{\lambda_3}\ldots\beta_{\lambda_n}=\sum_{m=0}^{\infty}s^m\binom{m+n-2}{n-2}\beta_{\lambda-\lambda_1-2m}$, which in particular contains either $\beta_{\lambda-\lambda_1-2d}+s\beta_{\lambda-\lambda_1-2d-2}$ or $s\beta_{\lambda-\lambda_1-2d}+\beta_{\lambda-\lambda_1-2d-2}$. We assume the former (without loss of generality). Multiplying by $\beta_{\lambda_1}$, we have that $\beta_{\lambda_1}\beta_{\lambda_2}\ldots\beta_{\lambda_n}$ contains $\beta_{\lambda_1}(\beta_{\lambda-\lambda_1-2d}+s\beta_{\lambda-\lambda_1-2d-2})$, which in turn (using Case $2$) contains $\beta_{\lambda-2d-2}+s\beta_{\lambda-2d-2}$. (The crucial point here is that $\lambda_1>0$, so that the second term of $\beta_{\lambda_1}\beta_{\lambda-\lambda_1-2d}$ has coefficient $1$, rather than $s$). We conclude that $\beta_{\lambda_1}\beta_{\lambda_2}\ldots\beta_{\lambda_n}$ contains $(1+s)\beta_{\lambda-2d-2}$ for any $d\geq\ceil{\lambda/2}$, which completes the proof.\end{proof}

To complete Case $3$, we must show that the multiplicity spaces of levels $m\leq\ceil{\lambda/2}$ are all definite, and have sign $(-1)^m$. This follows by an analysis similar to the proof of Lemma \ref{lem:nnpbound} (in particular first expanding $\beta_{\lambda_2}\beta_{\lambda_3}\ldots\beta_{\lambda_n}$ and then using Case $2$ to multiply the result by $\beta_{\lambda_1}$). We leave this as an exercise.

\section{Proof of Case 4 for $n = 3$}\label{app:nppclass}
We use essentially the same strategy, only the details are more complicated.
\subsection{Proof of Case 4 for $n = 3$ and $\lambda < 0$}\label{subsec:ln}
\begin{lemma}\label{lem:ln1} No space with level greater than $\ceil{\lambda_2}$ is definite.
\end{lemma}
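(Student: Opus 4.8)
The plan is to induct on the number $p$ of positive highest weights among $\lambda_1 > \cdots > \lambda_n$, the cases $p=0$ and $p=1$ being Cases 1 and 3 (already settled). Throughout we argue with signature characters: $E_m$ is definite exactly when the coefficient $a_m + s b_m$ of $\beta_{\lambda-2m}$ in $\prod_{i=1}^n \beta_{\lambda_i}$ has $a_m=0$ or $b_m=0$, equivalently when $|\operatorname{sgn}(E_m)| = |a_m - b_m|$ equals the known dimension $a_m + b_m = \binom{m+n-2}{n-2}$; so the goal is to show $a_m>0$ and $b_m>0$ for every $m > \ceil{\lambda_2}$.

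The first step is to split off the second-largest factor via $\beta_{\lambda_2} = L_{\lambda_2} + \beta_{\lambda_2 - 2\ceil{\lambda_2}}$, where $L_{\lambda_2} = e^{\lambda_2} + e^{\lambda_2 - 2} + \cdots + e^{\lambda_2 - 2\floor{\lambda_2}}$ is the positive-definite polynomial part. I would then analyze the two resulting summands of $\prod_i \beta_{\lambda_i}$ separately. For $\beta_{\lambda_2 - 2\ceil{\lambda_2}} \cdot \prod_{i \ne 2} \beta_{\lambda_i}$: since $\lambda_2 - 2\ceil{\lambda_2} < 0$, this is again a product of $n$ generic Verma signature characters, but now with only $p - 1 \le n - 2$ positive weights and total weight $\lambda - 2\ceil{\lambda_2} < \lambda < 0$ (using $\ceil{\lambda_2}\geq 1$). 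By the inductive hypothesis (or Case 3 when $p-1=1$) only its level-$0$ space is definite, and more precisely its coefficient of $\beta_{\lambda - 2m}$ — which lives at level $m - \ceil{\lambda_2}$ in that smaller product — has both $s$-parities present whenever $m - \ceil{\lambda_2} \ge 1$. This is exactly where the threshold $\ceil{\lambda_2}$ enters.

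The remaining term $L_{\lambda_2} \cdot \prod_{i \ne 2} \beta_{\lambda_i}$ is a genuine signature character with nonnegative coefficients — a finite sum of shifts $e^{\lambda_2 - 2t} \prod_{i \ne 2}\beta_{\lambda_i}$ for $0 \le t \le \floor{\lambda_2}$ — so it contributes nonnegatively to every weight-space dimension, hence to each $a_m + b_m$. The conclusion should then follow by showing this contribution cannot destroy the indefiniteness already forced by the first term: once $a_m, b_m \ge 1$ are guaranteed, enlarging $a_m + b_m$ through a nonnegative addition keeps both strictly positive.

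The main obstacle is making this last combination rigorous. Unlike signatures of individual weight spaces, the coefficient of $\beta_{\lambda - 2m}$ in a sum of two signature characters is not the sum of the coefficients-with-nonnegative-parts, because re-expanding a character in the $\{\beta_\mu\}$ basis introduces minus signs (cf. Lemma \ref{lem:edecomp1}, where $e^\mu$ acquires negative $\beta$-coefficients). So the honest argument tracks $\operatorname{sgn}(E_m) = a_m - b_m$ against $\binom{m+n-2}{n-2} = a_m + b_m$ and shows $|\operatorname{sgn}(E_m)| < \binom{m+n-2}{n-2}$ persists for $m > \ceil{\lambda_2}$; for this one expands each shift $e^{\lambda_2 - 2t}\prod_{i\ne2}\beta_{\lambda_i}$ using the classification of the $(n-1)$-fold product $\prod_{i\ne2}\beta_{\lambda_i}$ (which has $p-1$ positive weights and negative total $\lambda - \lambda_2$) together with Lemma \ref{lem:finperturbs} to control the effect of the shift, and checks that each such term contributes a strictly smaller signature defect than the positive slack supplied by the first term. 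Getting this bookkeeping to line up so that the threshold comes out to be exactly $\ceil{\lambda_2}$, rather than something larger, is the delicate point.
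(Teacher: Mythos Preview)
Your decomposition $\beta_{\lambda_2}=L_{\lambda_2}+\beta_{\lambda_2-2\ceil{\lambda_2}}$ and reduction to one fewer positive weight is exactly the paper's idea, but you have over-read the context and then stalled on a non-obstacle. The lemma lives in the subsection for $n=3$, $p=2$, $\lambda<0$ (two positive weights $\lambda_1>\lambda_2>0$ and one negative $\lambda_3$); there is no induction on $p$, and the paper's proof is three lines.

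The difficulty you flag does not exist here. Because $\lambda<0$, every shift occurring in $L_{\lambda_2}\cdot\beta_{\lambda_1}\beta_{\lambda_3}$ stays in the negative range: writing $\beta_{\lambda_1}\beta_{\lambda_3}=\sum_{k\ge0}(c_k+sd_k)\,\beta_{(\lambda-\lambda_2)-2k}$ with $c_k,d_k\ge0$ (it is an honest signature character), each term $e^{\lambda_2-2t}\beta_{(\lambda-\lambda_2)-2k}$ has both $(\lambda-\lambda_2)-2k<0$ and $\lambda-2(t+k)<0$, so by Lemma~\ref{lem:finperturbs} it equals $\beta_{\lambda-2(t+k)}$ on the nose. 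Hence the $L_{\lambda_2}$-piece has manifestly nonnegative $\beta$-coefficients, the $\beta$-coefficients of the full product are literally the sums of those of the two pieces, and no ``re-expansion with minus signs'' ever occurs. Indefiniteness at level $m>\ceil{\lambda_2}$ follows immediately from indefiniteness at level $m-\ceil{\lambda_2}\ge1$ of the $p=1$, $n=3$ product $\beta_{\lambda_1}\beta_{\lambda_2-2\ceil{\lambda_2}}\beta_{\lambda_3}$, which is Case~3 (with negative total weight).

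The paper phrases the same reduction a bit more cleanly: it first multiplies the two smaller factors and reads off from the $n=2$ closed form (using $\lambda_2+\lambda_3<0$) that $\beta_{\lambda_2}\beta_{\lambda_3}$ \emph{contains} $\beta_{\lambda_2-2\ceil{\lambda_2}}\beta_{\lambda_3}$, in the sense that the difference is a finite sum $\sum_{k<\ceil{\lambda_2}}(1\text{ or }s)\,\beta_{\lambda_2+\lambda_3-2k}$. Multiplying through by $\beta_{\lambda_1}$ preserves this containment (each $\beta_{\lambda_1}\beta_{\lambda_2+\lambda_3-2k}$ is again a genuine signature character), and then the $p=1$, $n=3$ classification finishes.
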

\begin{proof} Since $\lambda_2+\lambda_3 < 0$, the previously proven Case 2 classification gives that the product of $\beta_{\lambda_2}$ and $\beta_{\lambda_3}$ contains the product of $\beta_{\lambda_2-2\ceil{\lambda_2}}$ and $\beta_{\lambda_3}$. Hence, $\beta_{\lambda_1}\cdot \beta_{\lambda_2} \cdot \beta_{\lambda_3}$ contains $\beta_{\lambda_1}\cdot \beta_{\lambda_2-2\ceil{\lambda_2}} \cdot \beta_{\lambda_3}$. By the $n=3$ subcase of Case 3 (previously proven) we get the claim.
\end{proof}

\begin{lemma}\label{lem:ln2} Every space with level at most $\ceil{\lambda_2}$ is positive definite.
\end{lemma}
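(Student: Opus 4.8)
The plan is to work entirely with signature characters. Recall that the decomposition $\prod_{i=1}^n \beta_{\lambda_i} = \sum_{m \ge 0}(a_m + s b_m)\beta_{\lambda - 2m}$ is unique, so setting $s = -1$ identifies $\operatorname{sgn}(E_m)$ with the coefficient of $\beta^-_{\lambda - 2m}$ in the expansion of $\prod_i \beta^-_{\lambda_i}$ as a $\mathbb{Z}$-linear combination of the $\beta^-_{\lambda - 2k}$. Since $\dim E_m = \binom{m + n - 2}{n - 2}$, the space $E_m$ is positive definite exactly when that coefficient equals $\binom{m + n - 2}{n - 2}$. So it suffices to show that for $0 \le m \le \ceil{\lambda_2}$ the coefficient of $\beta^-_{\lambda - 2m}$ in $\prod_i \beta^-_{\lambda_i}$ is $\binom{m + n - 2}{n - 2}$.

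First I would split each positive-weight character into its ``cap'' and ``tail'': for $1 \le i \le p$ write $\beta^-_{\lambda_i} = L_{\lambda_i} + \beta^-_{\lambda_i - 2\ceil{\lambda_i}}$, with $\lambda_i - 2\ceil{\lambda_i} < 0$, exactly as in the proofs of Case 2. Expanding $\prod_{i \le p}(L_{\lambda_i} + \beta^-_{\lambda_i - 2\ceil{\lambda_i}}) \cdot \prod_{j > p}\beta^-_{\lambda_j}$ over subsets $S \subseteq \{1,\dots,p\}$, each summand is a polynomial in $e$ with nonnegative integer coefficients, namely $\prod_{i \in S}L_{\lambda_i}$, times a product of $r := n - |S| \ge n - p \ge 2$ characters $\beta^-$ all of whose highest weights are negative. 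By the $p = 0$ classification (Theorem \ref{thm:p0class}), that all-negative product equals $\sum_{k \ge 0}(-1)^k \binom{k + r - 2}{r - 2}\,\beta^-_{\nu_S - 2k}$, where $\nu_S$ is the sum of the relevant highest weights. Multiplying by $\prod_{i \in S}L_{\lambda_i}$ and rewriting each power of $e$ in terms of $\beta^-$'s via Lemma \ref{lem:edecomp1} turns each summand into an explicit combination of the $\beta^-_{\lambda - 2k}$, and adding these gives the expansion of $\prod_i \beta^-_{\lambda_i}$.

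It then remains to read off the coefficient of $\beta^-_{\lambda - 2m}$ for $m \le \ceil{\lambda_2}$. The key simplification is that a summand using the tail of factor $i$ has all its terms of level $\ge \ceil{\lambda_i}$; since $\ceil{\lambda_1}, \ceil{\lambda_2} \ge \ceil{\lambda_2}$, only the summands with $\{1,2\} \subseteq S$ can affect levels below $\ceil{\lambda_2}$ (the boundary level $m = \ceil{\lambda_2}$, and the sub-case $\floor{\lambda_1} = \floor{\lambda_2}$, need one extra line of bookkeeping). For such $S$ one uses that the low-degree part of $\prod_{i \in S}L_{\lambda_i}$ is completely explicit --- e.g. the coefficient of $e^{\lambda_1 + \lambda_2 - 2k}$ in $L_{\lambda_1}L_{\lambda_2}$ is $k + 1$ for $k \le \floor{\lambda_2}$ --- and combines it with the binomial coefficients $\binom{k + r - 2}{r - 2}$ from the $p = 0$ step and the substitution of Lemma \ref{lem:edecomp1}, so that the stray $\pm$ contributions cancel and what is left at level $m$ is exactly $\binom{m + n - 2}{n - 2}$ with a plus sign.

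The hard part will be precisely this last cancellation: for a given low level $m$ many pairs $(S,k)$ contribute, and one must check that after the $e^\mu \mapsto \beta^-$ substitution everything collapses to the full dimension. I expect this to come down to a Vandermonde-type identity among binomial coefficients, together with bookkeeping over which of the ceilings $\ceil{\lambda_3}, \dots, \ceil{\lambda_p}$ fall below $m$ and which pairs $\floor{\lambda_i} = \floor{\lambda_j}$ coincide; since each auxiliary product was already handled by the $n = 2$ and $p \le 1$ classifications, this stays finite and mechanical.
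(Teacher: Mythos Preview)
Your plan rests on a misreading of the setting. This lemma lives in the three–factor case $n=3$, $p=2$ (two positive weights $\lambda_1>\lambda_2>0$, one negative $\lambda_3<0$) under the hypothesis $\lambda=\lambda_1+\lambda_2+\lambda_3<0$. You instead work with general $n$ and explicitly assume $n-p\ge 2$; here $n-p=1$, so your bound $r=n-|S|\ge n-p\ge 2$ already fails (for $S=\{1,2\}$ only the single factor $\beta^-_{\lambda_3}$ remains). More importantly, in the generality you assume the statement you are trying to prove is \emph{false}: for $2\le p\le n-2$, $n\ge 4$ the classification asserts that only the level~$0$ space is definite, so the levels $1,\dots,\ceil{\lambda_2}$ are not positive definite there. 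Hence the ``Vandermonde-type cancellation'' you are hoping for cannot materialize, and the plan cannot be completed as written.

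The paper's argument avoids all of this by using associativity and applying the $n=2$ classification twice. First, Case~2 (the function $g$ with $x_1,x_2>0$) gives immediately that in $M_{\lambda_1}\otimes M_{\lambda_2}$ the multiplicity spaces of levels $0$ through $\ceil{\lambda_2}$ are positive definite. Second, one tensors each of the corresponding summands $M_{\lambda_1+\lambda_2-2k}$ with $M_{\lambda_3}$ and applies Case~2 again (now one positive and one negative weight, with negative sum since $\lambda-2k<0$); the only thing to verify is that the positive-definite range of each such two-tensor reaches level $\ceil{\lambda_2}-k$ of that piece, which unwinds to the single inequality $\ceil{\lambda_1+\lambda_2}\ge 2\ceil{\lambda_2}-1$, immediate from $\lambda_1\ge\lambda_2$. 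No $s=-1$ specialization, no subset expansion, and no combinatorial identity are needed.
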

\begin{proof} From the previously proven Case 2 classification, we obtain that in the decomposition of $M_{\lambda_1} \otimes M_{\lambda_2}$, the spaces with level at most $\ceil{\lambda_2}$ are all positive definite. Applying the Case 2 classification again, we get that if the following condition is satisfied for $0 \leq k \leq \ceil{\lambda_2-1}$:
$$
\lambda_1+\lambda_2 - 2k - 2\ceil{\lambda_1+\lambda_2 - 2k} \leq \lambda_1 + \lambda_2 - 2\ceil{\lambda_2}
$$
then the claim holds. This is equivalent to:
$$
\ceil{\lambda_1+\lambda_2} \geq 2\ceil{\lambda_2}-1
$$
which is true.
\end{proof}

\begin{proof}[Proof of $\lambda < 0$ subcase of Case 4 for $n = 3$] Lemma \ref{lem:ln1} shows that there are no definite spaces with level more than $\ceil{\lambda_2}$. Lemma \ref{lem:ln2} shows that all spaces with level at most $\ceil{\lambda_2}$ are positive definite. Thus, the $\lambda < 0$ case of Case 4 for $n = 3$ holds.
\end{proof}

\subsection{Proof of Case 4 for $n=3$ and $\lambda > 0$}\label{subsec:lp}
From the previously proven Case 2 classification, the product $\beta_{\lambda_1}\cdot \beta_{\lambda_2} \cdot \beta_{\lambda_3}$ contains the product of $\beta_{\lambda_1-2\ceil{\lambda_1}}\cdot \beta_{\lambda_2} \cdot \beta_{\lambda_3}$ if $\ceil{\lambda_1+\lambda_3} < \ceil{\lambda_1}$, and it contains $\beta_{\lambda_1-2\ceil{\lambda_1+1}}\cdot \beta_{\lambda_2} \cdot \beta_{\lambda_3}$ if $\ceil{\lambda_1+\lambda_3} = \ceil{\lambda_1}$. From this we get that all definite spaces have levels at most $\ceil{\lambda_1+1}$, so we only need to focus on this finite set of spaces. Our strategy here is to write $\beta_{\lambda_1}\cdot \beta_{\lambda_2} \cdot \beta_{\lambda_3}$ as a sum of powers of $e$ and use Lemma \ref{lem:edecomp1} (see Appendix \ref{app:p1n2class}) to determine the definite spaces. We begin by writing out the $e$-decomposition for the first $\ceil{\lambda_1+1}$ powers of $e$ in $(\beta_{\lambda_1} \beta_{\lambda_2} \beta_{\lambda_3})^-$:
\begin{lemma}\label{lem:edecompnpp} If $k \leq \ceil{\lambda_2}$, then the coefficient of $e^{\lambda-2k}$ in the $(\beta_{\lambda_1} \beta_{\lambda_2} \beta_{\lambda_3})^-$ is $\ceil{\frac{k+1}{2}}$. If $\ceil{\lambda_1} \geq k > \ceil{\lambda_2}$, then the coefficient of $e^{\lambda-2k}$ is $\ceil{\frac{k+1}{2}}$ if $2 | (k-\ceil{\lambda_2})$ and $\ceil{\frac{k+1}{2}}-(k-\floor{\lambda_2})$ otherwise.
\end{lemma}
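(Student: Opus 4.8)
The plan is a direct computation from the explicit formula for $\beta_{\lambda_i}^-$, i.e. the Proposition on signature characters specialized at $s=-1$. Write $\alpha_i$, $\gamma_j$, $\delta_\ell$ for the coefficients of $e^{\lambda_1-2i}$, $e^{\lambda_2-2j}$, $e^{\lambda_3-2\ell}$ in $\beta_{\lambda_1}^-$, $\beta_{\lambda_2}^-$, $\beta_{\lambda_3}^-$ respectively, so that the quantity to be computed is $\sum_{i+j+\ell=k}\alpha_i\gamma_j\delta_\ell$. The first step is to record these three sequences. Since $\lambda_3<0$ we have $\delta_\ell=(-1)^\ell$ for all $\ell\ge 0$. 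For a positive weight $\mu$, the formula for $\beta_\mu^-$ shows that the coefficient of $e^{\mu-2j}$ is $1$ for $0\le j\le\ceil{\mu}$ and is $(-1)^{j-\ceil{\mu}}$ for $j\ge\ceil{\mu}$ (the two descriptions agree at $j=\ceil{\mu}$ since $(-1)^{2\ceil{\mu}}=1$, and there is no gap between the pieces because $\floor{\mu}+1=\ceil{\mu}$). In particular $\alpha_i=1$ for all $i\le\ceil{\lambda_1}$, while $\gamma_j=1$ for $j\le\ceil{\lambda_2}$ and $\gamma_j=(-1)^{j-\ceil{\lambda_2}}$ for $j\ge\ceil{\lambda_2}$.

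Both ranges in the lemma satisfy $k\le\ceil{\lambda_1}$, so every index $i$ appearing in the sum has $i\le k\le\ceil{\lambda_1}$ and hence $\alpha_i=1$ throughout; the coefficient of $e^{\lambda-2k}$ therefore reduces to $\sum_{i+j+\ell=k}\gamma_j(-1)^\ell$. Next I would evaluate the baseline sum $S_k:=\sum_{i+j+\ell=k}(-1)^\ell$. Grouping by $\ell$ this is $\sum_{\ell=0}^{k}(k-\ell+1)(-1)^\ell$, and an elementary computation gives $S_k=\ceil{\tfrac{k+1}{2}}$. When $k\le\ceil{\lambda_2}$ every $j$ occurring has $j\le k\le\ceil{\lambda_2}$, so $\gamma_j=1$ and the coefficient is exactly $S_k=\ceil{\tfrac{k+1}{2}}$, which is the first assertion.

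For $\ceil{\lambda_2}<k\le\ceil{\lambda_1}$ I would write $\gamma_j=1+(\gamma_j-1)$ and split the sum: the $1$-part again contributes $S_k=\ceil{\tfrac{k+1}{2}}$, and the correction is $\sum_{i+j+\ell=k}(\gamma_j-1)(-1)^\ell$. Now $\gamma_j-1$ vanishes unless $j>\ceil{\lambda_2}$ with $j-\ceil{\lambda_2}$ odd, in which case it equals $-2$. For such a fixed $j$, the inner sum $\sum_{i+\ell=k-j}(-1)^\ell$ is $1$ if $k-j$ is even and $0$ if $k-j$ is odd. The parity conditions $j\equiv\ceil{\lambda_2}+1$ and $j\equiv k\pmod 2$ can be met simultaneously exactly when $k-\ceil{\lambda_2}$ is odd: if $2\mid(k-\ceil{\lambda_2})$ the correction is $0$ and the coefficient is $\ceil{\tfrac{k+1}{2}}$, whereas if $k-\ceil{\lambda_2}$ is odd the admissible values of $j$ are precisely $\ceil{\lambda_2}+1,\ceil{\lambda_2}+3,\dots,k$, which is $\tfrac{k-\ceil{\lambda_2}+1}{2}$ of them, so the correction equals $-2\cdot\tfrac{k-\ceil{\lambda_2}+1}{2}=-(k-\ceil{\lambda_2}+1)=-(k-\floor{\lambda_2})$, using $\floor{\lambda_2}=\ceil{\lambda_2}-1$. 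This is the second assertion.

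There is no conceptual obstacle here; the entire content is careful bookkeeping in the last paragraph — verifying that the window $k\le\ceil{\lambda_1}$ really forces every $\alpha_i=1$, pinning down where $\gamma_j-1$ is nonzero, and counting the admissible $j$ by parity — and that bookkeeping, rather than any idea, is the one place where a slip would produce the wrong piecewise formula.
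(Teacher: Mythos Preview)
Your proof is correct and follows essentially the same approach as the paper: expand the triple product as a sum over $i_1+i_2+i_3=k$ and track the signs coming from each $\beta_{\lambda_i}^-$. The paper's proof is sketchier --- it simply notes that for $k>\ceil{\lambda_2}$ one must split into four cases on the parities of $k$ and $\ceil{\lambda_2}$ and then verify the stated formula in each --- whereas your ``baseline $S_k$ plus correction'' decomposition handles all parities at once and is cleaner bookkeeping, but the underlying idea is identical.
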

\begin{proof} Use the fact that each term of $e^{\lambda-2k}$ in the final product arises from a sum of products of $e^{\lambda_1-2i_1}, e^{\lambda_2-2i_2}, e^{\lambda_3-2i_3}$ for $i_1+i_2+i_3 = k$. There are $\binom{k+2}{2}$ such products, and we can write them all out. If $k \leq \ceil{\lambda_2}$, these terms are easy to analyze. If $k > \ceil{\lambda_2}$, we must write out four cases based on the parities of $k$ and $\ceil{\lambda_2}$. After writing down the answer in each case, it is easy to see that the description provided covers all cases.
\end{proof}

\begin{corollary}\label{cor:npp1} Define a function $r$ by $r(i) = 0$ if $i$ is even and $r(i) = 1$ if $i$ is odd. Define a function $t$ by $t(i) = 1$ if $i$ positive and $t(i) = 0$ if $i$ nonpositive. Define a function $f$ by $$f(\lambda_1, \lambda_2, k) = \ceil{\frac{k+1}{2}} - (k-\floor{\lambda_2})\cdot r(k-\ceil{\lambda_2})\cdot t(k-\ceil{\lambda_2})$$ if $0 \leq k \leq \ceil{\lambda_1+1}$ and $f(\lambda_1, \lambda_2, k) = 0$ if $k$ is not in that range. Then an equivalent statement of Lemma \ref{lem:edecompnpp} is that the coefficient of $e^{\lambda-2k}$ in the $(\beta_{\lambda_1} \beta_{\lambda_2} \beta_{\lambda_3})^-$ is $f(\lambda_1, \lambda_2, k)$ for $0 \leq k \leq \ceil{\lambda_1}$.
\end{corollary}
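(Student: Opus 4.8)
The plan is to check directly that the closed-form expression $f(\lambda_1,\lambda_2,k)$ reproduces, case by case, the two-clause description of the coefficient of $e^{\lambda-2k}$ furnished by Lemma \ref{lem:edecompnpp}; this amounts to unwinding the definitions of $r$ and $t$, so I would treat it as a short verification rather than a substantive argument. First I would observe that because $\lambda_2$ is a generic (hence non-integral) positive real, $\ceil{\lambda_2}=\floor{\lambda_2}+1$, so the integers $k$ in the range $0\leq k\leq\ceil{\lambda_1}$ split cleanly into the block $k\leq\ceil{\lambda_2}$ and the block $\ceil{\lambda_2}<k\leq\ceil{\lambda_1}$, with no overlap and no gap; these are precisely the two regimes appearing in Lemma \ref{lem:edecompnpp}.

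For $k\leq\ceil{\lambda_2}$ we have $k-\ceil{\lambda_2}\leq 0$, so $t(k-\ceil{\lambda_2})=0$ by definition, the correction term in $f$ vanishes, and $f(\lambda_1,\lambda_2,k)=\ceil{\frac{k+1}{2}}$, which is exactly the value asserted by the first clause of Lemma \ref{lem:edecompnpp}. For $\ceil{\lambda_2}<k\leq\ceil{\lambda_1}$ we have $k-\ceil{\lambda_2}>0$, hence $t(k-\ceil{\lambda_2})=1$, and I split on parity: if $k-\ceil{\lambda_2}$ is even then $r(k-\ceil{\lambda_2})=0$, so again $f=\ceil{\frac{k+1}{2}}$, matching the ``$2\mid(k-\ceil{\lambda_2})$'' subcase; if $k-\ceil{\lambda_2}$ is odd then $r(k-\ceil{\lambda_2})=t(k-\ceil{\lambda_2})=1$, so $f=\ceil{\frac{k+1}{2}}-(k-\floor{\lambda_2})$, matching the ``otherwise'' subcase. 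Since these exhaust every integer $k$ with $0\leq k\leq\ceil{\lambda_1}$, the coefficient of $e^{\lambda-2k}$ in $(\beta_{\lambda_1}\beta_{\lambda_2}\beta_{\lambda_3})^-$ equals $f(\lambda_1,\lambda_2,k)$, as claimed. The extra value $f(\lambda_1,\lambda_2,\ceil{\lambda_1+1})$ and the convention that $f$ vanishes outside $[0,\ceil{\lambda_1+1}]$ are irrelevant to the present claim and are recorded only for use in later arguments.

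I do not expect a genuine obstacle here, since the content is bookkeeping; the only points demanding care are boundary conventions. Specifically, one must read ``nonpositive'' in the definition of $t$ as including $0$, which is what places the endpoint $k=\ceil{\lambda_2}$ into the first clause of Lemma \ref{lem:edecompnpp} (coefficient $\ceil{\frac{k+1}{2}}$, no correction), consistent with that lemma's first clause being $k\leq\ceil{\lambda_2}$ rather than $k<\ceil{\lambda_2}$; and one should confirm that ``even'' is the parity that triggers the uncorrected value in both formulations, which a quick check at $k=\ceil{\lambda_2}+1$ and $k=\ceil{\lambda_2}+2$ settles.
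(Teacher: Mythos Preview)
Your verification is correct and is precisely the intended content: the paper states this as a corollary without proof, and your case split on the sign and parity of $k-\ceil{\lambda_2}$ is the obvious (and only) way to see that the single formula $f$ packages the two clauses of Lemma~\ref{lem:edecompnpp}. Your attention to the boundary $k=\ceil{\lambda_2}$ and to the non-integrality of $\lambda_2$ is appropriate, and nothing further is needed.
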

\begin{corollary} By the same reasoning as in Lemma \ref{lem:edecompnpp}, we get that the coefficient of $e^{\lambda-2\ceil{\lambda_1+1}}$ in the $(\beta_{\lambda_1} \beta_{\lambda_2} \beta_{\lambda_3})^-$ is $f(\lambda_1, \lambda_2, \ceil{\lambda_1+1}) - 2$.
\end{corollary}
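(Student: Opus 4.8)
The plan is to compute the coefficient of $e^{\lambda-2k}$ in $(\beta_{\lambda_1}\beta_{\lambda_2}\beta_{\lambda_3})^-$ at the single value $k=\ceil{\lambda_1+1}$ by re-running the term-by-term count from the proof of Lemma \ref{lem:edecompnpp} and isolating the one term not already covered by Corollary \ref{cor:npp1}. Write $\epsilon_l(i)$ for the coefficient of $e^{\lambda_l-2i}$ in $\beta_{\lambda_l}^-$; then the coefficient in question is $\sum_{i_1+i_2+i_3=k}\epsilon_1(i_1)\epsilon_2(i_2)\epsilon_3(i_3)$, the sum taken over $i_1,i_2,i_3\ge 0$. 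Since $\lambda_1>0$ is nonintegral, the explicit formula for $\beta_{\lambda_1}^-$ gives $\epsilon_1(i)=1$ for $0\le i\le\ceil{\lambda_1}$ and $\epsilon_1(\ceil{\lambda_1}+1)=-1$; note also $\ceil{\lambda_1+1}=\ceil{\lambda_1}+1$.

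First I would observe that for $0\le k\le\ceil{\lambda_1}$ every index $i_1\le k$ has $\epsilon_1(i_1)=1$, so the triple sum equals the ``idealized'' count $\sum_{i_2+i_3\le k}\epsilon_2(i_2)\epsilon_3(i_3)$; this is exactly the quantity the parity bookkeeping in the proof of Lemma \ref{lem:edecompnpp} evaluates, and Corollary \ref{cor:npp1} records its value as $f(\lambda_1,\lambda_2,k)$ (consistently, the defining formula for $f$ does not involve $\lambda_1$). That bookkeeping — the case split on the parities of $k$ and $\ceil{\lambda_2}$ — never used $k\le\ceil{\lambda_1}$, so it returns $f(\lambda_1,\lambda_2,\ceil{\lambda_1}+1)$ as the idealized count at $k=\ceil{\lambda_1}+1$ as well. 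Running the actual term-by-term count at this $k$, every triple with $i_1\le\ceil{\lambda_1}$ contributes as in the idealized count, while the only triple with $i_1>\ceil{\lambda_1}$ is $(\ceil{\lambda_1}+1,0,0)$ — forced by $i_1\le k=\ceil{\lambda_1}+1$ — whose true contribution $\epsilon_1(\ceil{\lambda_1}+1)\epsilon_2(0)\epsilon_3(0)=-1$ is $2$ less than the $+1$ that the idealized count had assigned it. Hence the coefficient is $f(\lambda_1,\lambda_2,\ceil{\lambda_1+1})-2$.

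I do not expect a genuine obstacle; morally this is just the observation that passing from $k=\ceil{\lambda_1}$ to $k=\ceil{\lambda_1}+1$ introduces exactly one new ``boundary'' triple, the one where $\beta_{\lambda_1}^-$ first deviates from its leading block of $+1$'s. The only points requiring a line of care are: verifying that the parity case analysis of Lemma \ref{lem:edecompnpp} indeed reproduces $f$ when applied at the one extra value $k=\ceil{\lambda_1}+1$ (immediate, since that analysis is uniform in $k$), and that no triple besides $(\ceil{\lambda_1}+1,0,0)$ has $i_1$ outside the range $[0,\ceil{\lambda_1}]$ on which $\epsilon_1\equiv 1$ (immediate, since $i_1\le k$).
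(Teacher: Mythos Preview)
Your argument is correct and is precisely the fleshing-out the paper intends: the Corollary is stated without a separate proof, relying on the phrase ``using the same logic as in the proof of Lemma \ref{lem:edecompnpp}'', and your computation---that the idealized count with $\epsilon_1\equiv 1$ reproduces $f(\lambda_1,\lambda_2,k)$ uniformly in $k$, while the single new boundary triple $(\ceil{\lambda_1}+1,0,0)$ contributes $-1$ rather than $+1$---is exactly that logic made explicit.
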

We now have an explicit description of the original signature character product written as a sum of powers of $e$. We will now find the definite spaces (which have levels $k$ between 0 and $\ceil{\lambda_1+1}$), tackling the $k \leq \ceil{\lambda_1}$ and $k = \ceil{\lambda_1+1}$ cases separately. We begin by solving the $k \leq \ceil{\lambda_1}$ case. Using Lemma \ref{lem:edecomp1}, the fact that the function $x \to x-2\ceil{x}$ is cyclic, and a division of the interval $[0, \ceil{\lambda_1+1}]$ into several intervals based on the signs of $\lambda-2k$ and $2k+1-\lambda$, we get the following result (Lemma \ref{lem:check}):

\begin{lemma}\label{lem:check} We have the following results on the signatures of the first $\ceil{\lambda_1+1}$ spaces. 
\begin{itemize}
\item If $0 \leq k \leq \ceil{\frac{\lambda}{2}}$, then the signature of the level $k$ space is 
$$
f(\lambda_1, \lambda_2, k) - f(\lambda_1, \lambda_2, k-1).
$$
\item If $\ceil{\frac{\lambda+2}{2}} \leq k \leq \text{min}(\ceil{\lambda_1}, \ceil{\lambda})$, then the signature of the level $k$ space is
$$
f(\lambda_1, \lambda_2, k) + f(\lambda_1, \lambda_2, k-1) - 2f(\lambda_1, \lambda_2, \ceil{\lambda}-k).
$$
\item If $\ceil{\lambda+1} \leq k \leq \ceil{\lambda_1}$, then the signature of the level $k$ space is 
$$
f(\lambda_1, \lambda_2, k) + f(\lambda_1, \lambda_2, k-1)
$$
\end{itemize}
\end{lemma}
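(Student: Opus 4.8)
The plan is to convert the explicit power-of-$e$ expansion of $(\beta_{\lambda_1}\beta_{\lambda_2}\beta_{\lambda_3})^-$ from Corollary \ref{cor:npp1} into an expansion in the signature characters $\beta^-_{\lambda-2k}$ via Lemma \ref{lem:edecomp1}, and then read off coefficients. Since each $\beta^-_\mu$ has leading term $e^\mu$, an expansion $\sum_k c_k\,\beta^-_{\lambda-2k}$ determines the $c_k$ uniquely, and as $(\beta_{\lambda_1}\beta_{\lambda_2}\beta_{\lambda_3})^-=\sum_k\operatorname{sgn}(E_k)\,\beta^-_{\lambda-2k}$, the regular signature of the level $k$ space is precisely the coefficient of $\beta^-_{\lambda-2k}$. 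Write $g_j$ for the coefficient of $e^{\lambda-2j}$ in the product, so $g_j=f(\lambda_1,\lambda_2,j)$ for $0\le j\le\ceil{\lambda_1}$ by Corollary \ref{cor:npp1}. Since $2j$ is an integer, $\ceil{\lambda-2j}=\ceil{\lambda}-2j$, so the third term of Lemma \ref{lem:edecomp1} applied to $\mu=\lambda-2j$ is a multiple of $\beta^-_{\lambda-2(\ceil{\lambda}-j)}$ — this is the ``cyclic'' fact, with $j\mapsto\ceil{\lambda}-j$ an involution — and hence $e^{\lambda-2j}$ contributes only to $\beta^-_{\lambda-2j}$, $\beta^-_{\lambda-2(j+1)}$ and $\beta^-_{\lambda-2(\ceil{\lambda}-j)}$.

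Collecting the coefficient of a fixed $\beta^-_{\lambda-2k}$ gives
\[
\operatorname{sgn}(E_k)=g_k-\operatorname{sign}(\lambda-2(k-1))\,g_{k-1}+\bigl(\operatorname{sign}(2(\ceil{\lambda}-k)+1-\lambda)-1\bigr)\,g_{\ceil{\lambda}-k},
\]
with the convention $g_j=0$ for $j<0$. The rest is a sign analysis on $0\le k\le\ceil{\lambda_1+1}$, keyed to the signs of $\lambda-2k$ and $2k+1-\lambda$ as in Lemma \ref{lem:edecomp1}. A short floor/ceiling computation shows $\operatorname{sign}(\lambda-2(k-1))=+1$ for $k\le\ceil{\lambda/2}$ and $=-1$ for $k\ge\ceil{(\lambda+2)/2}$, and that the reflected coefficient $\operatorname{sign}(2(\ceil{\lambda}-k)+1-\lambda)-1$ vanishes for $k\le\ceil{\lambda/2}$ and equals $-2$ for $k\ge\ceil{(\lambda+2)/2}$. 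Plugging these in together with $g_j=f(\lambda_1,\lambda_2,j)$: for $0\le k\le\ceil{\lambda/2}$ the last term drops and $\operatorname{sgn}(E_k)=f(\lambda_1,\lambda_2,k)-f(\lambda_1,\lambda_2,k-1)$; for $\ceil{(\lambda+2)/2}\le k\le\min(\ceil{\lambda_1},\ceil{\lambda})$ the index $\ceil{\lambda}-k$ lies in $[0,\ceil{\lambda_1}]$ and $\operatorname{sgn}(E_k)=f(\lambda_1,\lambda_2,k)+f(\lambda_1,\lambda_2,k-1)-2f(\lambda_1,\lambda_2,\ceil{\lambda}-k)$; and for $\ceil{\lambda+1}\le k\le\ceil{\lambda_1}$ one has $\ceil{\lambda}-k<0$, so $g_{\ceil{\lambda}-k}=0$ and $\operatorname{sgn}(E_k)=f(\lambda_1,\lambda_2,k)+f(\lambda_1,\lambda_2,k-1)$. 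These are the three bullets; since $\ceil{(\lambda+2)/2}=\ceil{\lambda/2}+1$ and $\ceil{\lambda+1}=\ceil{\lambda}+1$, the three ranges abut (some possibly empty) and together cover levels $0$ through $\ceil{\lambda_1}$.

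I expect the delicate step to be the bookkeeping that keeps us inside the range of coefficients we actually know. Every index feeding into $\operatorname{sgn}(E_k)$ is one of $k$, $k-1$, $\ceil{\lambda}-k$, and one must verify these never exceed $\ceil{\lambda_1}$ throughout the three ranges: this uses $\lambda<2\lambda_1$ (which holds because $\lambda_1>\lambda_2>0>\lambda_3$), giving $\ceil{\lambda/2}\le\ceil{\lambda_1}$ and, in turn, $\ceil{\lambda}-k\le\ceil{\lambda_1}$ throughout the middle range; and the anomalous coefficient at level $\ceil{\lambda_1+1}$ never enters, since all three ranges stop at $\ceil{\lambda_1}$. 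One must also invoke genericity (non-integrality of $\lambda$, $\lambda/2$, $\lambda_1$) to rule out degeneracies — two of $k$, $k-1$, $\ceil{\lambda}-k$ coinciding, or a $\operatorname{sign}$ evaluated at $0$ — after which the match with Lemma \ref{lem:check} is immediate.
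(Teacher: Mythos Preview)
Your proposal is correct and follows precisely the route the paper indicates: apply Lemma \ref{lem:edecomp1} to the $e$-expansion from Corollary \ref{cor:npp1}, use the cyclicity of $x\mapsto x-2\ceil{x}$ to identify the reflected index $\ceil{\lambda}-k$, and split into intervals according to the signs of $\lambda-2k$ and $2k+1-\lambda$. The paper gives no further detail than that sketch, and your write-up is a faithful (and more explicit) execution of it, including the range checks ensuring no unknown coefficient $g_j$ with $j>\ceil{\lambda_1}$ leaks into levels $k\le\ceil{\lambda_1}$.
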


Now, note that checking the positive/negative definiteness of each level $k$ space is equivalent to checking whether the signature of the level $k$ space is equal to $k+1$ or $-k-1$. This follows because the dimension of each level $k$ space is $\binom{k+3-2}{3-2} = k+1$, and thus the coefficient of the $\beta_{\lambda_1+\lambda_2+\lambda_3-2k}$ term (evaluated at $s = -1$) in the signature character decomposition of $\beta_{\lambda_1} \cdot \beta_{\lambda_2} \cdot \beta_{\lambda_3}$ is $k+1$ if and only if the level $k$ space is positive definite, and $-k-1$ if and only if the level $k$ is negative definite.

\begin{lemma}\label{lem:nppfirstpos} If $1 \leq k \leq \ceil{\lambda_1}$, then the level $k$ space is positive definite iff $\ceil{\lambda+1} \leq k \leq \ceil{\lambda_2}$. In particular if $\ceil{\lambda_1+1} > \ceil{\lambda_2}$, then the only positive definite space in the first $\ceil{\lambda_1+1}$ spaces is the level $0$ space.
\end{lemma}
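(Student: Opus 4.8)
The plan is to read the signature of each level $k$ space (for $1\le k\le\ceil{\lambda_1}$) off Lemma \ref{lem:check}, which records it as an explicit expression in the function $f$ of Corollary \ref{cor:npp1}, and then ask, in each of the three ranges of that lemma, exactly when this signature equals the dimension $k+1$ of the space --- positive-definiteness of a $(k+1)$-dimensional space being equivalent to its signature being $k+1$. Abbreviate $f(j)=f(\lambda_1,\lambda_2,j)$. The only properties of $f$ needed are those immediate from its definition: $f(j)=\ceil{\tfrac{j+1}{2}}$ for $j\le\ceil{\lambda_2}$; $f(j)=\ceil{\tfrac{j+1}{2}}$ for $j>\ceil{\lambda_2}$ with $j-\ceil{\lambda_2}$ even; and $f(j)=\ceil{\tfrac{j+1}{2}}-(j-\floor{\lambda_2})$ for $j>\ceil{\lambda_2}$ with $j-\ceil{\lambda_2}$ odd. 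In particular $f(j)\le\ceil{\tfrac{j+1}{2}}$ always, and $f(j)\ge 1$ whenever $0\le j\le\ceil{\lambda_2}$.

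First I would treat the low range $0\le k\le\ceil{\tfrac{\lambda}{2}}$, where the signature is $f(k)-f(k-1)$. Bounding $f(k)\le\ceil{\tfrac{k+1}{2}}$ and $f(k-1)\ge\ceil{\tfrac{k}{2}}-(k-1)$ (the subtracted term of $f(k-1)$, when nonzero, equals $k-\ceil{\lambda_2}\le k-1$ since $\lambda_2>0$), one gets $f(k)-f(k-1)\le k<k+1$ for every $k\ge 1$, so no level $k$ space with $1\le k\le\ceil{\tfrac{\lambda}{2}}$ is positive definite. Next I would treat the top range $\ceil{\lambda+1}\le k\le\ceil{\lambda_1}$, where the signature is $f(k)+f(k-1)$. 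A short computation --- splitting on the parity of $k-\ceil{\lambda_2}$ and on whether $k\le\ceil{\lambda_2}$, $k=\ceil{\lambda_2}+1$, or $k\ge\ceil{\lambda_2}+2$ --- shows $f(k)+f(k-1)=k+1$ when $k\le\ceil{\lambda_2}$, while $f(k)+f(k-1)\le\ceil{\lambda_2}+1<k+1$ when $k>\ceil{\lambda_2}$. Hence in this range the level $k$ space is positive definite precisely for $\ceil{\lambda+1}\le k\le\ceil{\lambda_2}$.

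The middle range $\ceil{\tfrac{\lambda}{2}}+1\le k\le\min(\ceil{\lambda_1},\ceil{\lambda})$ is the crux, and this is the step I expect to be the main obstacle: here the signature is $f(k)+f(k-1)-2f(\ceil{\lambda}-k)$. The bound $f(k)+f(k-1)\le k+1$ established above reduces matters to showing $f(\ceil{\lambda}-k)>0$, which is automatic whenever $\ceil{\lambda}-k\le\ceil{\lambda_2}$ or $\ceil{\lambda}-k-\ceil{\lambda_2}$ is even (in both cases $f(\ceil{\lambda}-k)=\ceil{\tfrac{\ceil{\lambda}-k+1}{2}}\ge 1$, using $0\le\ceil{\lambda}-k$ from $k\le\ceil{\lambda}$). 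The delicate subcase is $\ceil{\lambda}-k>\ceil{\lambda_2}$ with $\ceil{\lambda}-k-\ceil{\lambda_2}$ odd, where $f(\ceil{\lambda}-k)$ can be negative; there I would substitute the explicit values of $f(k)$, $f(k-1)$, and $f(\ceil{\lambda}-k)$, use the constraints $\ceil{\tfrac{\lambda}{2}}+1\le k\le\ceil{\lambda}$ to pin down the parities and the floor/ceiling identities among $\lambda,\lambda_1,\lambda_2$ that are in play, and check directly that the resulting integer is strictly less than $k+1$. The bookkeeping over the three independent quantities $\ceil{\lambda},\ceil{\lambda_1},\ceil{\lambda_2}$ and the attendant parity cases is the only genuine difficulty; each individual check is short arithmetic.

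Putting the three ranges together yields the asserted equivalence: for $1\le k\le\ceil{\lambda_1}$, the level $k$ space is positive definite iff $\ceil{\lambda+1}\le k\le\ceil{\lambda_2}$. The level $0$ space is always positive definite, since its signature is $f(0)-f(-1)=1$. For the ``in particular'' clause one combines this equivalence with the corollary following Lemma \ref{lem:check} --- which gives the level $\ceil{\lambda_1+1}$ space signature $f(\ceil{\lambda_1+1})-2$, strictly below its dimension $\ceil{\lambda_1+1}+1$, hence not positive definite --- and observes that under the stated hypothesis no level of $\{1,\dots,\ceil{\lambda_1}\}$ meets the interval $[\ceil{\lambda+1},\ceil{\lambda_2}]$; so among the first $\ceil{\lambda_1+1}$ spaces only the level $0$ space is positive definite.
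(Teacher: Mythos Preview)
Your plan is essentially the paper's own: read the signature off Lemma~\ref{lem:check} in each of its three ranges and compare to the dimension $k+1$; the paper's case split in the middle range is organized slightly differently (it first separates on whether $k>\ceil{\lambda_2}$) but lands on the same delicate subcase $\ceil{\lambda}-k>\ceil{\lambda_2}$ with $\ceil{\lambda}-k-\ceil{\lambda_2}$ odd, which it dispatches by the explicit substitution you propose.

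One small slip in your final paragraph: the ``first $\ceil{\lambda_1+1}$ spaces'' are levels $0$ through $\ceil{\lambda_1}$ (since $\lambda_1\notin\mathbb{Z}$), so the level-$\ceil{\lambda_1+1}$ space is not among them and need not be treated; moreover the quantity $f(\ceil{\lambda_1+1})-2$ from the corollary after Corollary~\ref{cor:npp1} is the $e$-coefficient in the character, not the signature of that space. The ``in particular'' clause follows directly from the main equivalence with no further input.
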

\begin{proof} If $\ceil{\lambda+1} \leq k \leq \ceil{\lambda_2}$, then Corollary \ref{cor:npp1} and Lemma \ref{lem:check} give that the level $k$ space is definite. Now we show that no other $\ceil{\lambda_1} \geq k > 0$ has level $k$ space positive definite. If $1 \leq k \leq \ceil{\frac{\lambda}{2}}$, then using Lemma \ref{lem:check}, the maximum possible value for the signature of the level $k$ space is $$1 + (k-1) - \floor{\lambda_2} < k+1.$$ If $\ceil{\frac{\lambda+2}{2}} \leq k \leq \text{min}(\lambda_1, \lambda)$, then we must address several cases. If $k > \ceil{\lambda_2}$, $\ceil{\lambda} - k \leq \ceil{\lambda_2}$, or $2|\ceil{\lambda}-k-\ceil{\lambda_2}$, then some bounding shows that the signature of the level $k$ space is less than $k+1$. The other possibility is that $k > \ceil{\lambda_2}$, $\ceil{\lambda}-k > \ceil{\lambda_2}$, and $2 | \ceil{\lambda}-k -\ceil{\lambda_2}-1$. In this case the signature of the level $k$ space is equal to 
$$
k+1 - 2\ceil{\frac{\lambda-k+1}{2}} + 2\ceil{\lambda-k} - 2\floor{\lambda_2}.
$$
This level $k$ space is positive definite when
$$
2\ceil{\lambda-k} - 2\ceil{\frac{\lambda-k+1}{2}} = 2\floor{\lambda_2}.
$$
Viewing the LHS as a function of $k$, we see that as $k$ increases by 2, the LHS decreases by 2. Evaluating the LHS at $k=0$ and $k=1$ gives that the above equality holds only if $k$ is equal to one of 
$$
\frac{\ceil{\lambda-1}}{2} - \floor{\lambda_2}
$$
or
$$
\frac{\ceil{\lambda-2}}{2} - \floor{\lambda_2}.
$$
But $k$ can't equal either of these as $k \geq \frac{\ceil{\lambda+2}}{2}$. So there are no positive definite spaces of level $k$ when $\ceil{\frac{\lambda+2}{2}} \leq k \leq \text{min}(\lambda_1, \lambda)$. \\
If $\ceil{\lambda_2+1} \leq k \leq \ceil{\lambda_1}$, then the maximum possible value of the signature of the level $k$ space is
$$
k+1 - (k-\floor{\lambda_2}) < k+1.
$$
So, the only positive definite spaces are the ones in the given range. 
\end{proof}

\begin{lemma}\label{lem:nppfirstneg} There are no negative definite spaces with level at most $\ceil{\lambda_1}$. 
\end{lemma}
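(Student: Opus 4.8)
The plan is to read the signature $s_k$ of the level $k$ space directly off Lemma~\ref{lem:check} and Corollary~\ref{cor:npp1}, and to check that $s_k > -(k+1)$ for every $k$ with $0 \le k \le \ceil{\lambda_1}$; since the level $k$ space under consideration has dimension $k+1$, this is precisely the assertion that none of these spaces is negative definite. By parity $s_k > -(k+1)$ is the same as $s_k \ge 1-k$, so it suffices to bound $s_k$ from below.

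Corollary~\ref{cor:npp1} writes $f(\lambda_1,\lambda_2,j) = \ceil{(j+1)/2} - c_j$, where the correction $c_j = (j-\floor{\lambda_2})\,r(j-\ceil{\lambda_2})\,t(j-\ceil{\lambda_2})$ vanishes unless $j > \ceil{\lambda_2}$ and $j - \ceil{\lambda_2}$ is odd, in which case $c_j = j - \floor{\lambda_2}$. Three elementary facts drive all the estimates: (i) $0 \le c_j \le j$, so $\ceil{(j+1)/2} - j \le f(\lambda_1,\lambda_2,j) \le \ceil{(j+1)/2} \le (j+2)/2$ for all $j \ge 0$; (ii) at most one of $c_k, c_{k-1}$ is nonzero and it never exceeds its own index, so, using the identity $\ceil{(k+1)/2}+\ceil{k/2}=k+1$, we get $f(\lambda_1,\lambda_2,k)+f(\lambda_1,\lambda_2,k-1) \ge (k+1)-k = 1$; (iii) $\ceil{(k+1)/2} \ge \ceil{k/2}$.

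I would then run through the three subintervals of Lemma~\ref{lem:check}. On $0 \le k \le \ceil{\lambda/2}$ we have $s_k = f(\lambda_1,\lambda_2,k) - f(\lambda_1,\lambda_2,k-1) \ge \big(\ceil{(k+1)/2}-k\big) - \ceil{k/2} \ge -k$ by (i) and (iii). On $\ceil{\lambda+1} \le k \le \ceil{\lambda_1}$ we have $s_k = f(\lambda_1,\lambda_2,k)+f(\lambda_1,\lambda_2,k-1) \ge 1$ by (ii). Both of these exceed $-(k+1)$, so none of these spaces is negative definite.

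The middle subinterval $\ceil{(\lambda+2)/2} \le k \le \min(\ceil{\lambda_1},\ceil{\lambda})$ is the only place any real work is needed, since there $s_k = f(\lambda_1,\lambda_2,k)+f(\lambda_1,\lambda_2,k-1) - 2f(\lambda_1,\lambda_2,\ceil{\lambda}-k)$, with the last term counted twice. The saving point is that $\ceil{\lambda}-k$ is small on this range, so by (i) and (ii) one gets $s_k \ge 1 - (\ceil{\lambda}-k+2) = k - \ceil{\lambda} - 1$, and this is $\ge 1-k$ exactly when $2k \ge \ceil{\lambda}+2$; that inequality holds because $k \ge \ceil{(\lambda+2)/2} = \ceil{\lambda/2}+1$ and $2\ceil{\lambda/2}$ is an integer that is $\ge \lambda$, hence $\ge \ceil{\lambda}$. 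Thus $s_k > -(k+1)$ throughout $0 \le k \le \ceil{\lambda_1}$. The main obstacle here is not conceptual but bookkeeping — keeping track of which corrections $c_j$ are active and handling the interval endpoints — and, as these estimates indicate, even the crudest bounds on $f$ already suffice, so no delicate case analysis is required.
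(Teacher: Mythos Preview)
Your proposal is correct and follows essentially the same approach as the paper: both arguments run through the three subintervals of Lemma~\ref{lem:check}, bound the function $f$ crudely from above and below, and verify that the resulting lower bound on the signature exceeds $-(k+1)$ in each range. Your organization via the facts (i)--(iii) is a bit tidier than the paper's ad hoc chain of inequalities, but the underlying estimates are the same.
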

\begin{proof} Consider a level $k$ space with $0 \leq k \leq \ceil{\lambda_1}$. If $k \leq \ceil{\frac{\lambda}{2}}$, then the minimum possible signature of the level $k$ space is:
$$
0 - (k-\floor{\lambda_2}) > -k-1
$$
If $\ceil{\frac{\lambda+2}{2}} \leq k \leq \text{min}(\ceil{\lambda_1}, \ceil{\lambda})$, then the minimum possible value of the signature of the level $k$ space is
\begin{align*}
(k+1) - 2\ceil{\frac{\lambda-k+1}{2}} - k+\floor{\lambda_2} &\geq 1 - \ceil{\lambda-k+2} \\
&\geq k-1 - \ceil{\lambda} \\
&\geq -k.
\end{align*}
If $\ceil{\lambda+1} \leq k \leq \ceil{\lambda_1}$, then the minimum possible value of the signature of the level $k$ space is
$$
k+1 - (k-\floor{\lambda_2}) > -k-1.
$$
So there are no negative definite spaces.
\end{proof}
Lemmas \ref{lem:nppfirstpos} and \ref{lem:nppfirstneg} classify definite spaces with level at most $\ceil{\lambda_1}$. All that remains is to determine when the level $\ceil{\lambda_1+1}$ space is definite. 
\begin{lemma}\label{lem:excnpp} The only tensors with the level $\ceil{\lambda_1+1}$ space definite are tensors with explicit type $\langle 1,0,0,-1 \rangle$. In this explicit type, the level $2$ space is negative definite.
\end{lemma}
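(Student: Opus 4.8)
The plan is to finish the classification by pinning down the single remaining candidate. By the range bound established at the start of this subsection together with Lemmas \ref{lem:nppfirstpos} and \ref{lem:nppfirstneg}, the level $\ell := \ceil{\lambda_1+1} = \ceil{\lambda_1}+1$ space is the only one not yet classified as definite or indefinite. If $\lambda < 0$ this level lies strictly above $\ceil{\lambda_2}$, so Lemma \ref{lem:ln1} already rules it out; thus assume $\lambda > 0$. We know all the $e$-coefficients of $(\beta_{\lambda_1}\beta_{\lambda_2}\beta_{\lambda_3})^-$: by Corollary \ref{cor:npp1} the coefficient of $e^{\lambda-2k}$ is $f(\lambda_1,\lambda_2,k)$ for $0\le k\le\ceil{\lambda_1}$, and by the corollary following it the coefficient of $e^{\lambda-2\ell}$ is $f(\lambda_1,\lambda_2,\ell)-2$; since $\lambda_3<0$ the product is a genuine $e$-series, so these data together with Lemma \ref{lem:edecomp1} determine $\operatorname{sgn}(E_\ell)$.

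First I would compute $\operatorname{sgn}(E_\ell)$, i.e.\ the coefficient of $\beta^-_{\lambda-2\ell}$ in the product, by substituting Lemma \ref{lem:edecomp1} into each power $e^{\lambda-2k}$, exactly as in the derivation of Lemma \ref{lem:check}. The ``straight'' summands $\beta^-_{\lambda-2k}$ and $-\operatorname{sign}(\lambda-2k)\,\beta^-_{\lambda-2k-2}$ of $e^{\lambda-2k}$ reach level $\ell$ only for $k=\ell$ and $k=\ell-1$, and since $\lambda-2(\ell-1)=\lambda-2\ceil{\lambda_1}<0$ they contribute $\big(f(\lambda_1,\lambda_2,\ell)-2\big)+f(\lambda_1,\lambda_2,\ell-1)$; the ``cyclic'' summand $(\operatorname{sign}(1-\lambda+2k)-1)\,\beta^-_{\lambda-2k-2\ceil{\lambda-2k}}$ sits at level $\ceil{\lambda}-k$ and is nonzero only when $\lambda-2k>1$, so it reaches level $\ell$ only from $k=\ceil{\lambda}-\ell$, contributing $-2f(\lambda_1,\lambda_2,\ceil{\lambda}-\ell)$ precisely when $\ceil{\lambda}\ge\ell$ and $\ceil{\lambda}-\ell<(\lambda-1)/2$. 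Since $\lambda<\lambda_1+\lambda_2$ one has $\ceil{\lambda}\le\ceil{\lambda_1}+\ceil{\lambda_2}$, so whenever this last contribution is present $0\le\ceil{\lambda}-\ell\le\ceil{\lambda_2}-1$, whence $f(\lambda_1,\lambda_2,\ceil{\lambda}-\ell)=\ceil{\frac{\ceil{\lambda}-\ell+1}{2}}\ge 0$ and is $\le\ceil{\ceil{\lambda_2}/2}$. Altogether
$$
\operatorname{sgn}(E_\ell)=f(\lambda_1,\lambda_2,\ell)+f(\lambda_1,\lambda_2,\ell-1)-2-2\,f(\lambda_1,\lambda_2,\ceil{\lambda}-\ell)\,\epsilon,\qquad\epsilon\in\{0,1\}.
$$

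Next I would evaluate $f(\lambda_1,\lambda_2,\ell)+f(\lambda_1,\lambda_2,\ell-1)$ from the piecewise definition in Corollary \ref{cor:npp1}: if $\ceil{\lambda_1}=\ceil{\lambda_2}$ it equals $\ceil{\lambda_1}$, while if $\ceil{\lambda_1}>\ceil{\lambda_2}$ then $\ell$ and $\ell-1$ both exceed $\ceil{\lambda_2}$, exactly one of them carries the correction term, and the sum is $1+\floor{\lambda_2}$ or $2+\floor{\lambda_2}$. Using $\floor{\lambda_2}\le\floor{\lambda_1}=\ceil{\lambda_1}-1$, this sum is always at most $\ceil{\lambda_1}+1$, so $\operatorname{sgn}(E_\ell)\le\ceil{\lambda_1}-1<\ceil{\lambda_1}+2=\operatorname{dim}(E_\ell)$ and $E_\ell$ is never positive definite. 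For negative definiteness one sets $\operatorname{sgn}(E_\ell)=-\operatorname{dim}(E_\ell)=-(\ceil{\lambda_1}+2)$; using $f(\lambda_1,\lambda_2,\ceil{\lambda}-\ell)\le\ceil{\ceil{\lambda_2}/2}$, the subcases $\ceil{\lambda_1}>\ceil{\lambda_2}$ yield contradictions (they would force $\ceil{\lambda_1}\le 1$ while $\ceil{\lambda_1}\ge 2$), and the subcase $\ceil{\lambda_1}=\ceil{\lambda_2}$ forces $\ceil{\lambda_1}=\ceil{\lambda_2}=1$ (hence $\floor{\lambda_1}=\floor{\lambda_2}=0$), $\epsilon=1$, and $\ceil{\lambda}-\ell=0$ (hence $\ceil{\lambda}=\ell=2$, $\floor{\lambda}=1$, $\floor{\lambda_3}=-1$) --- that is, explicit type $\langle 1,0,0,-1\rangle$. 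Finally, for that type the displayed formula reads $\operatorname{sgn}(E_2)=f(\lambda_1,\lambda_2,2)+f(\lambda_1,\lambda_2,1)-2-2f(\lambda_1,\lambda_2,0)=0+1-2-2=-3=-\operatorname{dim}(E_2)$, so its level $2$ space is negative definite, which completes the lemma. The main obstacle is the first step --- correctly locating every contribution to level $\ell$ after substituting Lemma \ref{lem:edecomp1}, in particular verifying that only the single cyclic term from $k=\ceil{\lambda}-\ell$ can land at level $\ell$ and that $\ceil{\lambda}-\ell\le\ceil{\lambda_2}-1$; once the displayed formula is in hand, the remaining case analysis is elementary.
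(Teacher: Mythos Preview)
Your argument is correct and follows essentially the same approach as the paper: derive the signature of $E_{\ell}$ (with $\ell=\ceil{\lambda_1+1}$) from the $e$-coefficients of Corollary \ref{cor:npp1} via Lemma \ref{lem:edecomp1}, bound above to exclude positive definiteness, then bound below to force the floors of $\lambda_1,\lambda_2$ to be tiny. The only difference is cosmetic: the paper obtains the cruder necessary condition $\ceil{\lambda_1}\le 2$ from the lower bound and then declares that ``checking the (finitely many) explicit types with this condition yields the given exception,'' whereas you split on $\ceil{\lambda_1}=\ceil{\lambda_2}$ versus $\ceil{\lambda_1}>\ceil{\lambda_2}$ and arrive at $\langle 1,0,0,-1\rangle$ directly without a residual finite check.
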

\begin{proof} First we check when the $\ceil{\lambda_1+1}$ space is positive definite. As $\ceil{\lambda} - \ceil{\lambda_1} \leq \ceil{\lambda_2}$ the maximum possible value of the signature of the level $\ceil{\lambda_1}$ space is:
$$
f(\lambda_1, \lambda_2, \ceil{\lambda_1+1})-2 + f(\lambda_1, \lambda_2, \ceil{\lambda}) \leq \ceil{\lambda_1} < \ceil{\lambda_1+2}.
$$
So this space can never be positive definite. For negative definiteness, we have that the minimum possible value of the signature is:
$$
f(\lambda_1, \lambda_2, \ceil{\lambda_1+1})-2 + f(\lambda_1, \lambda_2, \ceil{\lambda}) - 2f(\lambda_1, \lambda_2, \ceil{\lambda} - \ceil{\lambda_1}),
$$
which is at least $\ceil{\lambda_1} - \ceil{\lambda} - 2 + \floor{\lambda_2}$. For this space to be negative definite, then, we must have
$$
-\ceil{\lambda_1} \geq \ceil{\lambda_1} - \ceil{\lambda} + \floor{\lambda_2}
$$
Rearranging and using $\ceil{\lambda} \leq \ceil{\lambda_1}+\ceil{\lambda_2}+1$, we get that for this space to be negative definite, we must have $2 \geq \ceil{\lambda_1}$. Checking the (finitely many) explicit types with this condition yields the given exception. 
\end{proof}

\begin{proof}[Proof of $\lambda > 0$ subcase of Case 4 for $n = 3$] From initial observations, only spaces with level at most $\ceil{\lambda_1+1}$ can be definite. Lemmas \ref{lem:nppfirstpos} and \ref{lem:nppfirstneg} classify definite spaces with level at most $\ceil{\lambda_1}$, while Lemma \ref{lem:excnpp} shows when the level $\ceil{\lambda_1+1}$ is definite. Pulling these claims together gives the $\lambda > 0$ subcase of Case 4.
\end{proof}

Subsections \ref{subsec:ln} and \ref{subsec:lp} together yield the classification in Case 4. 

\section{Proof of Case 5 for $n = 3$}\label{app:pppclass}
An easy corollary to the previously proven Case 2 classification is that the first $\ceil{\lambda_3+1}$ multiplicity spaces are indeed positive definite. Thus, we need only classify multiplicity spaces with level greater than $\ceil{\lambda_3}$, which we will call the exceptional multiplicity spaces. To do this, we first determine all possible exceptional spaces in Subsections \ref{subsec:pppepl2} and \ref{subsec:pppepg2}. Then in Subsection \ref{subsec:pppexc} we use character computations to determine which of those instances actually produce exceptional spaces. \\
We begin with some notation. Write $A = \floor{\lambda_1}$, $a = \lambda_1-\floor{\lambda_1}$, and similarly define $B, b$ and $C, c$ for $\lambda_2$ and $\lambda_3$ respectively. Define $\epsilon = a+b+c$. We divide into subcases based on the floor of $\epsilon$, which is between $0$ and $2$ (inclusive).
\subsection{Possible exceptional multiplicity spaces when $\epsilon < 2$}\label{subsec:pppepl2}
\begin{lemma}\label{lem:pppepl2} If $M$ does not have explicit type $\langle 3d-1,d,d,d-1 \rangle$, $\langle 3d-2,d,d-1,d-1 \rangle$, $\langle 3d, d,d,d \rangle$, or $\langle 3d+1, d,d,d \rangle$, then it has no definite spaces with level more than $\ceil{\lambda_3}$. If $M$ has one of those types, then the possible exceptional definite spaces have levels $2d+1$ (first type), $2d$ (second type), $2d+1$ and $2d+2$ (third type), and $2d+2$ (fourth type).
\end{lemma}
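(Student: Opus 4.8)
The plan is to turn the question ``is $E_m$ definite for $m>\ceil{\lambda_3}$?'' into a purely combinatorial question about a sign pattern, by unfolding the triple tensor product two factors at a time and invoking the already-established $n=2$ classification (Case 2). Write $\nu=\lambda_1+\lambda_2$. Since $\operatorname{ch}_s$ is multiplicative and $s\mapsto -1$ is a ring map, Case 2 gives $\beta_{\lambda_1}^-\beta_{\lambda_2}^-=\sum_{l\ge 0}g(\lambda_1,\lambda_2,l)\,\beta_{\nu-2l}^-$ (there $\dim=1$, so the coefficient is exactly $g\in\{\pm1\}$); multiplying by $\beta_{\lambda_3}^-$ and applying Case 2 once more to each $\beta_{\nu-2l}^-\beta_{\lambda_3}^-$ yields
$$
\operatorname{sgn}(E_m)\;=\;\sum_{l=0}^{m} g(\lambda_1,\lambda_2,l)\cdot g\bigl(\{\nu-2l,\lambda_3\},\,m-l\bigr),
$$
where $g(\{x,y\},k)$ denotes $g$ evaluated on $x,y$ in decreasing order. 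This is a sum of $m+1$ terms, each $\pm1$, and $\dim E_m=\binom{m+1}{1}=m+1$; hence $E_m$ is definite if and only if the sign $\sigma(l):=g(\lambda_1,\lambda_2,l)\cdot g(\{\nu-2l,\lambda_3\},m-l)$ is the same for all $l\in\{0,\dots,m\}$. So the entire lemma becomes: find all $(\text{type},m)$ with $m>\ceil{\lambda_3}$ for which $\sigma$ is constant.

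Next I would analyze the $l$-dependence of $\sigma$. As $l$ grows, $\nu-2l$ decreases through $0$, so the second factor passes through the branches of $g$ in the order $\{x_1>x_2>0\}$, $\{x_1>0>x_2,\ x_1+x_2>0\}$, $\{x_1>0>x_2,\ x_1+x_2<0\}$; once $\lambda-2l<0$ (which, since $\lambda_3>0$, covers the whole exceptional range except finitely many $m$ with $\lambda-2m>0$, to be handled separately) this factor is simply $\operatorname{sign}\binom{\lambda_3}{m-l}=(-1)^{\max(0,\,m-l-\ceil{\lambda_3})}$. Likewise $g(\lambda_1,\lambda_2,l)$ is governed by the four-branch definition of $g$ on $x_1>x_2>0$, whose breakpoints involve only $\floor{\lambda_1},\ceil{\lambda_1},\floor{\lambda_2},\ceil{\lambda_2}$ and, through the recursive branch, $\floor{\nu},\ceil{\nu}$; in particular for $l\ge\ceil{\lambda_1}+\ceil{\lambda_2}+1$ it is fully alternating, $(-1)^{l-\ceil{\lambda_1}-\ceil{\lambda_2}}$. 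Combining the two: on the portion $l\le m-\ceil{\lambda_3}$ with $l$ past $\ceil{\lambda_1}+\ceil{\lambda_2}$ the exponent of $-1$ in $\sigma(l)$ is the constant $m-\ceil{\lambda_1}-\ceil{\lambda_2}-\ceil{\lambda_3}$, but over the top $\ceil{\lambda_3}$ values $l=m-\ceil{\lambda_3}+1,\dots,m$ the exponent acquires a term linear in $l$, so $\sigma$ alternates there. Consequently, whenever $\ceil{\lambda_3}\ge 2$ and $m$ is large enough that these top values of $l$ already lie in the alternating regime of $g(\lambda_1,\lambda_2,\cdot)$, the sum contains both signs and $E_m$ is not definite; this disposes of all but a bounded set of pairs $(\text{type},m)$.

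Finally I would finish with a finite check. The only integer data entering $\sigma$ are $\floor{\lambda_1},\floor{\lambda_2},\floor{\lambda_3}$ and $\floor\epsilon$ (equivalently $\floor{\nu}=\floor{a+b}$ and $\floor\lambda$), and under the standing hypothesis $\epsilon<2$ we have $\floor\epsilon\in\{0,1\}$; for each admissible configuration of these, together with each of the finitely many remaining small $m$, one substitutes the piecewise formula for $g$ into $\sigma(l)$ and records whether it is constant. The outcome is that constancy forces $(\floor{\lambda_1},\floor{\lambda_2},\floor{\lambda_3})$ to be as balanced as possible --- $(d,d,d)$, $(d,d,d-1)$, or $(d,d-1,d-1)$ --- with $\floor\epsilon=0$ (and also $\floor\epsilon=1$ allowed in the $(d,d,d)$ case), which is exactly the list $\langle 3d-1,d,d,d-1\rangle$, $\langle 3d-2,d,d-1,d-1\rangle$, $\langle 3d,d,d,d\rangle$, $\langle 3d+1,d,d,d\rangle$; and in each of these the only $m>\ceil{\lambda_3}$ at which all breakpoints can line up are $2d+1$; $2d$; $\{2d+1,2d+2\}$; $\{2d+2\}$ respectively. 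I expect the real work to be concentrated in this last step, specifically in controlling $g(\lambda_1,\lambda_2,\cdot)$ on the non-periodic window $l\sim\ceil{\lambda_1}+\ceil{\lambda_2}$ where the recursive branch is live: one must there track the $\lambda_1$-, $\lambda_2$-, and $\nu$-breakpoints against the $\lambda_3$- and $\lambda$-breakpoints of the second factor at once, and show they can coincide in the required way only in these four balanced configurations. The reduction in the first two paragraphs is, by contrast, routine.
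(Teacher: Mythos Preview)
Your approach is sound in outline but takes a genuinely different route from the paper. The paper does not iterate the $n=2$ decomposition and analyze the sign word $\sigma(l)$; instead it uses a \emph{containment} (or ``sub-product'') reduction to the already-proven Case~4. Concretely: since signatures depend only on the explicit type, one may perturb the fractional parts so that $b+c<1$; then the $n=2$ classification for $\beta_{\lambda_2}\beta_{\lambda_3}$ shows that $\beta_{\lambda_1}\beta_{\lambda_2}\beta_{\lambda_3}$ contains $\beta_{\lambda_1}\beta_{\lambda_2}\beta_{\lambda_3-2\ceil{\lambda_3}}$. This latter product has two positive and one negative highest weight, so Case~4 (the NPP classification) applies directly and gives that the only possibly-definite levels $k>\ceil{\lambda_3}$ satisfy $\ceil{A+B-C+\epsilon-1}+\ceil{\lambda_3}\le k\le B+1+\ceil{\lambda_3}$. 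A short case split on this two-sided inequality then yields the four listed families and levels.

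What each buys: the paper's reduction is a one-step argument that recycles Case~4 wholesale, avoiding any direct engagement with the recursive branch of $g$ on $x_1>x_2>0$; the finite check at the end is a bound on an integer interval of length at most~$2$, not a sign-pattern analysis. Your approach is more self-contained (it only invokes Case~2) and gives the signature itself as the explicit signed sum $\sum_l \sigma(l)$, which is conceptually clean and potentially reusable. But the price is exactly the ``real work'' you flag: tracking all the breakpoints of $g(\lambda_1,\lambda_2,\cdot)$ against those of $g(\{\nu-2l,\lambda_3\},\cdot)$ through several regimes of~$l$, including the awkward window where $0<\nu-2l<\lambda_3$ and both arguments are positive. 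One small technical point you should also address: $\nu=\lambda_1+\lambda_2$ need not be nonintegral under the standing genericity hypotheses, so before invoking $g(\{\nu-2l,\lambda_3\},\cdot)$ you should first perturb the fractional parts (preserving the explicit type) to make $\nu$ nonintegral.
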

\begin{proof}
By "fudging" the fractional parts of $\lambda_1$, $\lambda_2$, $\lambda_3$, we can ensure that $b+c < 1$ while maintaining $M$'s explicit type. If this condition is satisfied, then by the previously proven Case 2 classification, the signature character product $\beta_{\lambda_1} \cdot \beta_{\lambda_2} \cdot \beta_{\lambda_3}$ contains $\beta_{\lambda_1} \cdot \beta_{\lambda_2} \cdot \beta_{\lambda_3-2\ceil{\lambda_3}}$. From the closed form of two positive and one negative Vermas, we obtain that the only spaces with level more than $\ceil{\lambda_3}$ that can be definite in $M$s are spaces with level $k$, where $k$ satisfies
$$
\ceil{\lambda_1+\lambda_2+\lambda_3-2\ceil{\lambda_3}+1}+\ceil{\lambda_3} \leq k \leq \ceil{\lambda_2}+\ceil{\lambda_3}.
$$
This inequality is equivalent to
$$
\ceil{A+B-C+\epsilon-1}+\ceil{\lambda_3} \leq k \leq B+1+\ceil{\lambda_3}.
$$
Using some case analysis and bounding, we obtain that the only possible exceptional definite spaces occur in types:
\begin{itemize}
\item Explicit type $\langle 3d-1,d,d,d-1 \rangle$, in which the level $2d+1$ space could be definite.
\item Explicit type $\langle 3d-2,d,d-1,d-1 \rangle$, in which the level $2d$ space could be definite.
\item Explicit type $\langle 3d, d,d,d \rangle$, in which the level $2d+1$ and $2d+2$ spaces could be definite.
\item Explicit type $\langle 3d+1, d,d,d \rangle$, in which the level $2d+2$ space could be definite.
\end{itemize}
\end{proof}

\subsection{Possible exceptional multiplicity spaces when $2 < \epsilon < 3$}\label{subsec:pppepg2}
\begin{lemma}\label{lem:pppepg2} If $M$ does not have explicit type $\langle 3d,d,d-1,d-1 \rangle$, $\langle 3d+2,d,d,d \rangle$, or $\langle 2d+d'+2, d,d,d' \rangle$ for $d > d'$, then it has no definite spaces with level more than $\ceil{\lambda_3}$. If $M$ has one of those types, then the possible exceptional definite spaces have levels $2d+1$ (first type), $2d+2$ and $2d+3$ (third type), and $d+d'+3$ (fourth type).
\end{lemma}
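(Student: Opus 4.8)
The plan is to mirror the proof of Lemma~\ref{lem:pppepl2}, which settled the complementary range $\floor{\epsilon}\le 1$: we rewrite the signature character product $\beta_{\lambda_1}\beta_{\lambda_2}\beta_{\lambda_3}$, up to finitely many explicit correction terms, in terms of a tensor product of Verma modules whose definite multiplicity spaces are already classified, and then read off which levels $k>\ceil{\lambda_3}$ can still possibly support a definite space (the finer question of which of these \emph{do} is then left to Subsection~\ref{subsec:pppexc}).

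First I would fudge the fractional parts $a,b,c$ --- holding $\floor{\lambda_1},\floor{\lambda_2},\floor{\lambda_3},\floor{\lambda}$, and hence the explicit type, fixed --- to a convenient normal form. The essential new feature when $\floor{\epsilon}=2$ is that every pairwise sum $a+b$, $b+c$, $a+c$ now lies in $(1,2)$, so one cannot normalize to $b+c<1$ as in Lemma~\ref{lem:pppepl2}; the carries this forces are exactly what shift the answer. Next I would apply the identity $\beta_{\lambda_i}=\bigl(\sum_{j=0}^{\floor{\lambda_i}}e^{\lambda_i-2j}\bigr)+\beta_{\lambda_i-2\ceil{\lambda_i}}$ --- valid because, since $s^2=1$, the $j\ge\ceil{\lambda_i}$ tail of $\beta_{\lambda_i}$ is exactly $\beta_{\lambda_i-2\ceil{\lambda_i}}$ --- now to \emph{both} $\lambda_2$ and $\lambda_3$: a single negative reduction no longer suffices, since reducing only $\lambda_3$ leaves an empty window of candidate levels above $\ceil{\lambda_3}$. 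This writes $\beta_{\lambda_1}\beta_{\lambda_2}\beta_{\lambda_3}$ as $\beta_{\lambda_1}\beta_{\lambda_2-2\ceil{\lambda_2}}\beta_{\lambda_3-2\ceil{\lambda_3}}$ plus finitely many terms of the shapes (shift of $\beta_{\lambda_1}\beta_{\lambda_2}$, which is all-definite since every weight space is one-dimensional) and (shift of $\beta_{\lambda_1}\beta_{\lambda_2-2\ceil{\lambda_2}}$ or $\beta_{\lambda_1}\beta_{\lambda_3-2\ceil{\lambda_3}}$, whose definite spaces come from the $n=2$ and the $p=n-1$ classifications), each expandable into signature characters by Lemma~\ref{lem:edecomp1}. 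The leading term has one positive and two negative highest weights, so its definite spaces are governed by the Case~3 ($p=1$) closed form; since that non-definite behaviour is contained in the full product, a definite space at level $k>\ceil{\lambda_3}$ can occur only in a narrow window of levels near $\ceil{\lambda_2}+\ceil{\lambda_3}$, whose endpoints are affine functions of $A=\floor{\lambda_1}$, $B=\floor{\lambda_2}$, $C=\floor{\lambda_3}$ once $\floor{\epsilon}=2$ is imposed. A case analysis on the remaining relations among $A\ge B\ge C$ then shows the window is nonempty only for the three listed explicit types, and pins down the level(s) inside: $2d+1$ for $\langle 3d,d,d-1,d-1\rangle$, the two levels $2d+2,\,2d+3$ for $\langle 3d+2,d,d,d\rangle$, and $d+d'+3$ for $\langle 2d+d'+2,d,d,d'\rangle$.

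The main obstacle is the floor/ceiling bookkeeping now that carries are unavoidable: one must track how the carry in each of $a+b$, $b+c$, $a+c$ and in $\{\epsilon\}$ propagates through $\ceil{\lambda_2-2\ceil{\lambda_2}}$, $\ceil{\lambda_3-2\ceil{\lambda_3}}$ and the ceiling of $\tfrac12(\lambda_1+\lambda_2+\lambda_3-2\ceil{\lambda_2}-2\ceil{\lambda_3})$, so that the window lands on exactly these three types and not a neighbouring one. One must also verify that the finitely many ``head'' correction terms, once expanded via Lemma~\ref{lem:edecomp1}, cannot produce definite spaces outside the window --- the shifts of the all-definite $\beta_{\lambda_1}\beta_{\lambda_2}$ interfere, so one needs enough control to see the interference is never total except inside the window --- and finally that the three-way split of the problem by $\floor{\epsilon}\in\{0,1,2\}$, together with the integer values of $\epsilon$ and of the $\lambda_i$ excluded by genericity, is exhaustive and consistent with the classification list.
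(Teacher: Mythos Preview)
There is a genuine gap. The step ``since that non-definite behaviour is contained in the full product'' is exactly where your plan breaks: the splitting $\beta_{\lambda_i}=L_{\lambda_i}+\beta_{\lambda_i-2\ceil{\lambda_i}}$ is an identity of signature characters, \emph{not} a decomposition with nonnegative $\beta$-coefficients, so the monotonicity principle (non-definite summand $\Rightarrow$ non-definite total) does not apply. Concretely, for $\mu<0$ one has $e^{\mu}=\beta_{\mu}-s\beta_{\mu-2}$, so each correction term $e^{\lambda_j-2i}\cdot(\text{product of }\beta\text{'s})$ acquires genuinely negative $(a_m,b_m)$ when expanded in $\beta$'s. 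A small check already exhibits the failure: for explicit type $\langle 2,0,0,0\rangle$ (say $\lambda_1=\lambda_2=\lambda_3=0.9$), the full product is negative definite at level~$2$, yet your single-reduction sub-product $\beta_{\lambda_1}\beta_{\lambda_2}\beta_{\lambda_3-2\ceil{\lambda_3}}$ is non-definite at the corresponding level --- so ``contained in'' cannot mean what you need it to mean. Your later remark that the correction terms ``interfere'' implicitly concedes this, but then the leading-term argument no longer supplies the window and you are left computing everything from scratch via Lemma~\ref{lem:edecomp1}, which is a different and substantially harder proof.

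The paper proceeds instead by a containment that \emph{is} module-theoretic. It splits on whether $\floor{\lambda_2}=\floor{\lambda_3}$ and, in each case, uses the $n=2$ closed form for an appropriate pair (not the bare identity $\beta=L+\beta'$) to exhibit a genuine sub-tensor-product of type two-positive/one-negative sitting inside $M$ with nonnegative multiplicity coefficients. One then invokes the already-proved $p=2$, $n=3$ classification (Appendix~\ref{app:nppclass}) for that sub-product; because its top weight lands at the correct shift, its non-definite range directly covers all levels beyond $\ceil{\lambda_3}$ except the short window stated. The case split is not cosmetic: it is what makes the $n=2$ recursion land on a sub-product whose complement has nonnegative $\beta$-coefficients, which is precisely the ingredient your uniform double reduction lacks.
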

\begin{proof} We divide into cases based on whether $\floor{\lambda_3} = \floor{\lambda_2}$ or not. \\
Suppose $\floor{\lambda_2} = \floor{\lambda_3}$. Then from the previously proven Case 2 classification, the signature character product $\beta_{\lambda_1} \cdot \beta_{\lambda_2} \cdot \beta_{\lambda_3}$ contains $\beta_{\lambda_1} \cdot \beta_{\lambda_2-2\ceil{\lambda_3}-2} \cdot \beta_{\lambda_3}$. By the definite space classification for the previously proven $n=3$ subcase of Case 4, the only possible exceptional definite spaces are those with level $k$ where $k$ satisfies
$$
\ceil{\lambda_1+\lambda_2+\lambda_3-2\ceil{\lambda_2}-2+1} + \ceil{\lambda_3+1} \leq k \leq \ceil{\lambda_3} + \ceil{\lambda_3+1}.
$$
This inequality is equivalent to
$$
A+\ceil{\lambda_3+1} \leq k \leq C+1+\ceil{\lambda_3+1}.
$$
Using some case analysis and bounding, we get that the only possible exceptional definite spaces occur in
\begin{itemize}
\item $\langle 3d,d,d-1,d-1 \rangle$, in which the level $2d+1$ space could be definite.
\item $\langle 3d+2, d,d,d \rangle$, in which the level $2d+2$ and $2d+3$ spaces could be definite.
\end{itemize}
The other possibility is that $\floor{\lambda_3} < \floor{\lambda_2}$. Using the previously proven Case 2 classification, the signature character product $\beta_{\lambda_1} \cdot \beta_{\lambda_2} \cdot \beta_{\lambda_3}$ contains $\beta_{\lambda_1} \cdot \beta_{\lambda_2-2\ceil{\lambda_2}-1} \cdot \beta_{\lambda_3+1}$. Using the definite space classification for the previously proven $n=3$ subcase of Case 4, we get that the only possible exceptional definite spaces are spaces with level $k$ where $k$ satisfies
$$
\ceil{\lambda_1+\lambda_2+\lambda_3-2\ceil{\lambda_2}+1} + \ceil{\lambda_2} \leq k \leq \ceil{\lambda_3+1}+\ceil{\lambda_2}
$$
Using some case analysis and bounding, we get the only possible exceptional definite spaces occur in $\langle 2d+d'+2, d,d,d' \rangle$, in which the $d+d'+3$ space could be definite. Here $d > d'$.
\end{proof}

\subsection{Classification of exceptions}\label{subsec:pppexc}
Here we will address the exceptions found in the previous two subsections and show that six of the seven potential families of exceptions are always exceptional, and the seventh family is never exceptional. We begin with the seventh family.
\begin{lemma} In explicit type $\langle 3d+1, d,d,d \rangle$, the level $2d+2$ space is nondefinite.
\end{lemma}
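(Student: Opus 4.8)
The plan is to compute $\operatorname{sgn}(E_{2d+2})$ as an explicit sum of $2d+3$ signs and observe that they do not all agree. First I would use that multiplicity space signatures depend only on the explicit type to pass to a convenient generic decreasing triple $\lambda_1\ge\lambda_2\ge\lambda_3$ of explicit type $\langle 3d+1,d,d,d\rangle$ — in particular one with $\{\lambda_2\}+\{\lambda_3\}<1$, so $\floor{\lambda_2+\lambda_3}=2d$. Note that this explicit type forces $\epsilon:=\{\lambda_1\}+\{\lambda_2\}+\{\lambda_3\}\in(1,2)$ (any other value contradicts $\floor{\lambda}=3d+1$ with $\lambda$ generic), and that $\dim E_{2d+2}=\binom{2d+3}{1}=2d+3$. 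Since a multiplicity space is definite precisely when $|\operatorname{sgn}|=\dim$, it is enough to show $|\operatorname{sgn}(E_{2d+2})|\le 2d+1$.

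The main step is to apply the closed form for two tensors (the $n=2$ classification) twice: first $\beta_{\lambda_2}^-\beta_{\lambda_3}^-=\sum_{k\ge 0}g(\lambda_2,\lambda_3,k)\,\beta_{\lambda_2+\lambda_3-2k}^-$, and then, after multiplying by $\beta_{\lambda_1}^-$, again to each product $\beta_{\lambda_1}^-\beta_{\lambda_2+\lambda_3-2k}^-$ (legitimate since $\lambda_1$, $\lambda_2+\lambda_3-2k$, and their sum $\lambda-2k$ are all generic). Since the $\beta_\mu^-$ form a triangular family, reading off the coefficient of $\beta_{\lambda-2(2d+2)}^-$ yields
$$
\operatorname{sgn}(E_{2d+2})=\sum_{k=0}^{2d+2} g(\lambda_2,\lambda_3,k)\cdot g\!\left(\max\{\lambda_1,\lambda_2+\lambda_3-2k\},\ \min\{\lambda_1,\lambda_2+\lambda_3-2k\},\ 2d+2-k\right),
$$
a sum of $2d+3$ terms each equal to $\pm 1$. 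It then suffices to exhibit one $+1$ term and one $-1$ term, for then at least one summand of each sign occurs and $|\operatorname{sgn}(E_{2d+2})|\le(2d+3)-2=2d+1$, so $E_{2d+2}$ is nondefinite.

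For the $+1$ I would take $k=2d+2$: the second factor is $g(\lambda_1,\lambda_2+\lambda_3-4d-4,0)=1$ (second argument negative, and $g(\cdot,\cdot,0)=1$ in every branch), while unwinding $g(\lambda_2,\lambda_3,2d+2)$ through the $x_1>x_2>0$ branch (the correction $2\floor{\lambda_2}+2\floor{\lambda_3}-2\floor{\lambda_2+\lambda_3}$ vanishes here) reduces it to $\operatorname{sign}\binom{\lambda_2}{d+1}=+1$, a product of $d+1$ positive factors. For the $-1$ I would take $k=0$: since $\lambda_2+\lambda_3>\lambda_1$, the relevant value is $g(\lambda_2+\lambda_3,\lambda_1,2d+2)$, and the $x_1>x_2>0$ branch reduces it to $g(\lambda_2+\lambda_3,\lambda_1-2\ceil{\lambda_1},d+1)$ plus the correction $2\floor{\lambda_2+\lambda_3}+2\floor{\lambda_1}-2\floor{\lambda}=-2$ (using $\floor{\lambda}=3d+1$); the residual argument has $x_1+x_2=d+\epsilon-2\in(d-1,d)$, so the level $d+1$ lands in the sub-branch $\ceil{x_1+x_2}+1\le k\le\ceil{x_1}$ of the "$x_1>0>x_2$, $x_1+x_2>0$" case, where $g$ evaluates to $1$; hence $g(\lambda_2+\lambda_3,\lambda_1,2d+2)=1-2=-1$ and the $k=0$ summand is $1\cdot(-1)=-1$. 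The hard part is purely this branch bookkeeping: $g$ is defined by a long nested case split, so both evaluations require checking carefully which branch each argument falls in; the delicate point is that $d+1$ lands in the constant-$1$ sub-branch, so the $-2$ correction survives, and this is exactly where the constraint $\epsilon\in(1,2)$ from the explicit type is used. The base case $d=0$ (explicit type $\langle 1,0,0,0\rangle$, level $2$) is handled verbatim by the same two choices $k=0,2$, the only change being that the residual $g$ there falls in the $x_1+x_2<0$ case, again with value $1$.
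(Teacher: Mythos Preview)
Your proof is correct, but the paper takes a different and somewhat slicker route. You fix a single convenient representative with $\{\lambda_2\}+\{\lambda_3\}<1$, expand $\beta_{\lambda_1}^-\beta_{\lambda_2}^-\beta_{\lambda_3}^-$ by applying the $n=2$ closed form twice in the fixed order $(\lambda_2,\lambda_3)$ then $\lambda_1$, and exhibit two summands $k=0$ and $k=2d+2$ of opposite sign in the resulting expression for $\operatorname{sgn}(E_{2d+2})$; the price is the careful branch bookkeeping through the nested definition of $g$ (which you carry out correctly, including the $-2$ correction at $k=0$ coming from $\floor\lambda=3d+1$, and the check that the residual level $d+1$ lands in the constant-$1$ sub-branch). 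The paper instead exploits the freedom in the fractional parts more aggressively: it chooses a representative with \emph{both} $a+c>1$ and $b+c<1$, and then uses two \emph{different} two-factor groupings. From $\beta_{\lambda_1}\beta_{\lambda_3}$ (which now has $\floor{\lambda_1+\lambda_3}=2d+1$) it reads off a negative contribution to level $2d+2$, ruling out positive definiteness; from $\beta_{\lambda_2}\beta_{\lambda_3}$ (with $\floor{\lambda_2+\lambda_3}=2d$) it reads off a positive contribution, ruling out negative definiteness. This avoids chasing through the full recursive definition of $g$: each half is a single lookup in the two-tensor table. Your approach is more systematic (it is essentially the $q=1$ case of the iterated signature formula of Theorem~\ref{thm:qcb}) and would generalize mechanically, whereas the paper's argument is shorter here because it tailors the fractional parts and the grouping separately to each of the two definiteness directions.
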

\begin{proof} Fudge around the fractional parts of $\lambda_1$, $\lambda_2$, and $\lambda_3$, so that $a+c > 1$ and $b+c < 1$. (We can do this and maintain the explicit type of $M$.) By multiplying $\beta_{\lambda_1} \cdot \beta_{\lambda_3}$ and using the previously proven Case 2 classification, we get that the level $2d+2$ space is not positive definite. By multiplying $\beta_{\lambda_2} \cdot \beta_{\lambda_3}$ and using the Case 2 classification, we get that the level $2d+2$ space is not negative definite. We are done.
\end{proof}

Now we will show that the other six exceptional families are always exceptional, and we will determine the positivity/negativity of the exceptional definite spaces. Our strategy here is to write the signature character product as a sum of powers of $e$ and use the following results along with Lemma \ref{lem:edecomp1} to compute the signatures of the multiplicity spaces:

\begin{definition*} Define a function $f_2$ of a real $x$ and an integer $j$ by
$$
f_2(x,j) = 0
$$
if $j \leq \ceil{x}$ and
$$
f_2(x,j) = (j-\ceil{x}-\floor{\frac{j-\ceil{x+1}}{2}})(1+\floor{\frac{j-\ceil{x+1}}{2}})
$$
if $j \geq \ceil{x+1}$.
\end{definition*}

\begin{lemma}\label{lem:pppedecomp2} For $k \leq \ceil{\lambda_2}+\ceil{\lambda_3}+1$, the coefficient of $e^{\lambda-2k}$ in the original signature character product (evaluated at $s=-1$) is
$$
\binom{k+2}{2} - 2f_2(\lambda_1, k) - 2f_2(\lambda_2, k) - 2f_2(\lambda_3, k)
$$
\end{lemma}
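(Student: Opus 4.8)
The plan is to expand the triple product into a constant part and a family of corrections, and then to show that the hypothesis $k\le\ceil{\lambda_2}+\ceil{\lambda_3}+1$ kills everything but the first-order corrections. By the Proposition, the coefficient of $e^{\lambda_i-2j}$ in $\beta_{\lambda_i}^-$ equals $1$ for $0\le j\le\ceil{\lambda_i}-1$ and $(-1)^{j+\ceil{\lambda_i}}$ for $j\ge\ceil{\lambda_i}$; thus it equals $-1$ precisely when $j\ge\ceil{\lambda_i+1}$ and $j\equiv\ceil{\lambda_i+1}\pmod 2$. Writing $A_i=\sum_{j\ge0}e^{\lambda_i-2j}$, whose every coefficient is $1$, and $B_i=\sum_{l\ge0}e^{\lambda_i-2(\ceil{\lambda_i+1}+2l)}$, we get $\beta_{\lambda_i}^-=A_i-2B_i$, and hence
$$
\beta_{\lambda_1}^-\beta_{\lambda_2}^-\beta_{\lambda_3}^-=A_1A_2A_3-2\sum_{i}B_iA_jA_k+4\sum_{i<j}B_iB_jA_k-8\,B_1B_2B_3,
$$
where $\{i,j,k\}=\{1,2,3\}$ in each term.

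First I would read off the flat term: the coefficient of $e^{\lambda-2k}$ in $A_1A_2A_3$ is the number of triples of nonnegative integers summing to $k$, namely $\binom{k+2}{2}$. Next I would compute a single correction $B_iA_jA_k$: since $A_jA_k$ contributes coefficient $m+1$ in degree $m$, the coefficient of $e^{\lambda-2k}$ in $B_iA_jA_k$ is $\sum_{l\ge0,\ \ceil{\lambda_i+1}+2l\le k}(k-\ceil{\lambda_i+1}-2l+1)$. Setting $m_0=k-\ceil{\lambda_i+1}=k-\ceil{\lambda_i}-1$ and summing this arithmetic progression over $0\le l\le\floor{m_0/2}$ yields $(\floor{m_0/2}+1)(m_0+1-\floor{m_0/2})$ when $m_0\ge0$ and $0$ when $m_0<0$; a case check on the parity of $m_0$ matches this against the two branches in the definition of $f_2$, giving exactly $f_2(\lambda_i,k)$. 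This identification is the one place where an explicit, if routine, floor/parity computation is required.

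The key point is that the higher corrections vanish in the stated range. Since $\lambda_1\ge\lambda_2\ge\lambda_3>0$ we have $\ceil{\lambda_1}\ge\ceil{\lambda_2}\ge\ceil{\lambda_3}$, and every monomial of $B_i$ is $e^{\lambda_i-2j}$ with $j\ge\ceil{\lambda_i}+1$. Hence a monomial of $B_iB_jA_k$ (with $i<j$) contributing to $e^{\lambda-2k}$ forces $k\ge(\ceil{\lambda_i}+1)+(\ceil{\lambda_j}+1)\ge\ceil{\lambda_2}+\ceil{\lambda_3}+2$, contradicting $k\le\ceil{\lambda_2}+\ceil{\lambda_3}+1$; the triple product $B_1B_2B_3$ is even more constrained. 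So for $k$ in the stated range only $A_1A_2A_3$ and the three terms $B_iA_jA_k$ contribute, and summing their coefficients gives $\binom{k+2}{2}-2f_2(\lambda_1,k)-2f_2(\lambda_2,k)-2f_2(\lambda_3,k)$, as claimed.

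I expect the main obstacle to be purely organizational: getting the $f_2$ bookkeeping right, namely the two parity branches and the boundary case $m_0<0$ (equivalently $k\le\ceil{\lambda_i}$, where the correction and $f_2$ both vanish). The conceptual input — the $A_i-2B_i$ splitting and the degree count that forces the cross terms to die — is immediate from the explicit form of $\beta_{\lambda_i}^-$ given by the Proposition.
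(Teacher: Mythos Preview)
Your proof is correct and is essentially the paper's argument in a cleaner algebraic wrapper: the paper counts triples $(i_1,i_2,i_3)$ of $\pm1$ coefficients whose product is $-1$, observes that under the bound on $k$ three $-1$'s cannot occur, and identifies the count of single-$-1$ triples coming from $\beta_{\lambda_i}$ with $f_2(\lambda_i,k)$ via the same solution count $2i'_1+i_2+i_3=k-\ceil{\lambda_i+1}$ that you sum. Your $A_i-2B_i$ expansion performs the identical inclusion--exclusion and has the mild organizational advantage of making the vanishing of the two-$-1$ cross terms $B_iB_j$ explicit, whereas the paper leaves this implicit (such triples contribute $+1$ to the product of signs and hence never enter its count).
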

\begin{proof} Each term of $e^{\lambda-2k}$ arises from a product of $e^{\lambda_1-2i_1}$, $e^{\lambda_2-2i_2}$, and $e^{\lambda_3-2i_3}$, where $i_1+i_2+i_3=k$. If all of these terms were $1$, then by a standard counting argument, the coefficient would be $\binom{k+2}{2}$. However, some of the terms are $-1$. So if we can count the number of terms that are $-1$ and subtract twice that from $\binom{k+2}{2}$, we will be done. \\
A $-1$ arises as a product of $-1\cdot1\cdot1$ or $-1\cdot-1\cdot-1$. But since $k$ is bounded by $\ceil{\lambda_3}+\ceil{\lambda_2}+1$, the three $-1$'s cannot happen. So we need only count the number of $-1,1,1$ triples. \\
We claim that the number of valid triples with a $-1$ coming from $\beta_{\lambda_1}$ is $f_2(\lambda_1, k)$. To see this, count the number of solutions to:
$$
2i'_1 + i_2+i_3 = k-\ceil{\lambda_1+1}
$$
using standard methods. By symmetry, the total number of valid triples is $f_2(\lambda_1, k) + f_2(\lambda_2, k) + f_2(\lambda_3, k)$. Subtracting twice this from our original count gives the result.
\end{proof}
\begin{lemma} We have the following exceptional definite spaces:
\begin{itemize}
\item For $d \geq 0$ and explicit type $\langle 3d,d,d,d \rangle$, the level $2d+1$ and $2d+2$ spaces are positive definite.
\item For $d \geq 0$ and explicit type $\langle 3d+2,d,d,d \rangle$, the level $2d+2$ and $2d+3$ spaces are negative definite.
\item For $d \geq 1$ and explicit type $\langle 3d-1,d,d,d-1 \rangle$, the level $2d+1$ space is positive definite.
\item For $d \geq 1$ and explicit type $\langle 3d+1, d,d, d-1 \rangle$, the level $2d+2$ space is negative definite.
\item For $d \geq 1$ and explicit type $\langle 3d-2,d,d-1,d-1 \rangle$, the level $2d$ space is positive definite.
\item For $d \geq 1$ and explicit type $\langle 3d,d,d-1,d-1 \rangle$, the level $2d+1$ space is negative definite.
\end{itemize} 
\end{lemma}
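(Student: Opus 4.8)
The plan is to compute the signature $\operatorname{sgn}(E_k)$ for each candidate level $k$ in each of the seven candidate families isolated in Lemmas~\ref{lem:pppepl2} and~\ref{lem:pppepg2}, and to decide in each case whether $|\operatorname{sgn}(E_k)|$ equals $\dim E_k=k+1$. Recall that $\prod_{i=1}^{3}\beta_{\lambda_i}^{-}=\sum_{k\ge 0}\operatorname{sgn}(E_k)\,\beta_{\lambda-2k}^{-}$, so $\operatorname{sgn}(E_k)$ is just the coefficient of $\beta_{\lambda-2k}^{-}$ in the signature character product evaluated at $s=-1$. Lemma~\ref{lem:pppedecomp2} already supplies the coefficient $c_j$ of $e^{\lambda-2j}$ in $(\beta_{\lambda_1}\beta_{\lambda_2}\beta_{\lambda_3})^{-}$, namely $c_j=\binom{j+2}{2}-2f_2(\lambda_1,j)-2f_2(\lambda_2,j)-2f_2(\lambda_3,j)$ for $0\le j\le\ceil{\lambda_2}+\ceil{\lambda_3}+1$ (and $c_j=0$ for $j<0$), and one checks from the level bounds in Lemmas~\ref{lem:pppepl2} and~\ref{lem:pppepg2} that every candidate level lies within this validity range — in several of the families it lies exactly at the top, $k=\ceil{\lambda_2}+\ceil{\lambda_3}+1$.

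The first step is to convert this $e$-expansion into a $\beta^{-}$-expansion near level $k$ using Lemma~\ref{lem:edecomp1}. Tracking each of the three terms of $e^{\mu}=\beta_{\mu}^{-}-\operatorname{sign}(\mu)\beta_{\mu-2}^{-}+(\operatorname{sign}(1-\mu)-1)\beta_{\mu-2\ceil{\mu}}^{-}$, one finds that the coefficient of $\beta_{\lambda-2k}^{-}$ receives contributions only from $e^{\lambda-2k}$, from $e^{\lambda-2(k-1)}$, and from $e^{\lambda-2(\ceil{\lambda}-k)}$. Using genericity ($\ceil{\lambda_i}=\floor{\lambda_i}+1$, $\ceil{\lambda}=\floor{\lambda}+1$) to resolve the various signs, this collapses to
\[
\operatorname{sgn}(E_k)=\begin{cases} c_k-c_{k-1}, & 2k\le\floor{\lambda}+2,\\ c_k+c_{k-1}-2c_{\ceil{\lambda}-k}, & 2k\ge\floor{\lambda}+3,\end{cases}
\]
so that $\operatorname{sgn}(E_k)$ depends only on the explicit type, as expected. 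Because the candidate levels have size roughly $2d$ while $\ceil{\lambda}$ has size roughly $3d$, the auxiliary index $\ceil{\lambda}-k$ is small and either lands in the range of Lemma~\ref{lem:pppedecomp2} or is negative, so the right-hand side is entirely expressible through the formula for $c_j$.

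The second step is to run this computation family by family. For a given explicit type $\langle\floor{\lambda},A,B,C\rangle$ one knows $A,B,C$ and $\floor{\epsilon}\in\{0,1,2\}$ exactly, hence $\ceil{\lambda}$ and each $\ceil{\lambda_i}$; substituting into the piecewise definition of $f_2$ — which changes behaviour at $j=\ceil{\lambda_i}$ and according to the parity of $j-\ceil{\lambda_i+1}$ — one evaluates $c_k$, $c_{k-1}$, and $c_{\ceil{\lambda}-k}$ as explicit short expressions in $d$ (or in $d$ and $d'$), assembles $\operatorname{sgn}(E_k)$, and compares it with $\pm(k+1)$. The outcome is: $|\operatorname{sgn}(E_k)|=k+1$ with exactly the signs asserted in the six listed families; $|\operatorname{sgn}(E_k)|<k+1$ for the type $\langle 3d+1,d,d,d\rangle$ (recovering the preceding lemma) and for the levels $d+d'+3$ of $\langle 2d+d'+2,d,d,d'\rangle$ with $d'<d-1$; while the specialization $d'=d-1$ of this last family is precisely family~4. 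Combined with the reduction already performed in Subsections~\ref{subsec:pppepl2} and~\ref{subsec:pppepg2}, this classifies the exceptional definite spaces.

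The hard part is purely the bookkeeping in this final step. Several candidate levels sit exactly at the upper edge $k=\ceil{\lambda_2}+\ceil{\lambda_3}+1$ of the region where Lemma~\ref{lem:pppedecomp2} is clean — one level higher, a correction term would appear, just as in the computation of Case~4 in Appendix~\ref{app:nppclass} — so there is essentially no margin for error. Moreover $f_2(\lambda_i,j)$ is sensitive to the parities of $j-\ceil{\lambda_i}$, which vary within a family, and the regime split $2k\le\floor{\lambda}+2$ versus $2k\ge\floor{\lambda}+3$ flips for the smallest values of $d$; consequently each of the seven families — and within $\langle 2d+d'+2,d,d,d'\rangle$ the subcases $d'=d-1$ and $d'<d-1$ — must be handled as its own short computation, with the genuinely small instances ($d=0$ and, where relevant, $d=1$) verified by hand.
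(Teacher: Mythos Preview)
Your approach is essentially the same as the paper's: both use Lemma~\ref{lem:pppedecomp2} to get the $e$-coefficients $c_j$, then Lemma~\ref{lem:edecomp1} to convert to the $\beta^-$-expansion and read off $\operatorname{sgn}(E_k)$, and both then proceed family by family with a parity split. Your explicit two-regime formula for $\operatorname{sgn}(E_k)$ is correct and is just the $p=3$ analogue of the paper's Lemma~\ref{lem:check}.

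The one genuine difference is in how the final verification is carried out. You propose to evaluate $c_k$, $c_{k-1}$, $c_{\ceil{\lambda}-k}$ symbolically in $d$, assemble $\operatorname{sgn}(E_k)$, and compare with $\pm(k+1)$, treating small $d$ separately where the regime boundary shifts. The paper instead observes that, once the parity of $d$ is fixed (say $d=2a$ or $d=2a-1$), each piece of $f_2$ and of $\binom{j+2}{2}$ is quadratic in $a$, so the assertion $\operatorname{sgn}(E_k)=\pm(k+1)$ becomes a quadratic polynomial identity in $a$; it therefore suffices to check three values of $d$ of each parity (e.g.\ $d=2,4,6$ and $d=1,3,5$), which is done numerically. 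This ``check at three points'' trick buys a considerable reduction in bookkeeping over your direct algebraic route, and automatically absorbs the small-$d$ boundary cases you flag. Either way the argument goes through.
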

\begin{proof} We will outline the proof for $\langle 3d, d,d,d \rangle$. The other cases are exactly the same. \\
For $\langle 3d,d,d,d \rangle$, we know the only possible exceptional definite spaces are levels $2d+1$ and $2d+2$. We claim that these are both positive definite. Based on whether $d$ is odd or even, the claim that in $\langle 3d,d,d,d \rangle$, levels $2d+1$ and $2d+2$ are positive definite is by Lemmas \ref{lem:edecomp1}, \ref{lem:pppedecomp2} and the cyclicity of the map $x \to x-2\ceil{x}$, equivalent to some quadratic polynomials being identically 0. 

For example, take the case of even $d$ (so $d = 2a$). Writing $\lambda = \lambda_1+\lambda_2+\lambda_3$, we see that by Lemma \ref{lem:edecomp1} that \begin{align*} e^{\lambda-4d-2} &= \beta_{\lambda - 4d - 2}^- + \beta_{\lambda - 4d - 4}^-, \\ e^{\lambda-4d} &= \beta_{\lambda - 4d}^- + \beta_{\lambda - 4d - 2}^-, \\ e^{\lambda - 2\ceil{\lambda}+ 4d + 2} &= \beta_{\lambda - 2\ceil{\lambda}+ 4d + 2}^- - \beta_{\lambda - 2\ceil{\lambda}+ 4d}^- - 2\beta_{\lambda - 4d - 2}^-. \end{align*}
Using Lemma \ref{lem:pppedecomp2}, the coefficients of these three powers of $e$ in the signature character product corresponding to our tensor product are 
\begin{align*}
&\binom{2d+3}{2} - 2f_2(\lambda_1, 2d+1) - 2f_2(\lambda_2, 2d+1) - 2f_2(\lambda_3, 2d+1), \\
&\binom{2d+2}{2} - 2f_2(\lambda_1, 2d) - 2f_2(\lambda_2, 2d) - 2f_2(\lambda_3, 2d), \\
&\binom{d+2}{2} - 2f_2(\lambda_1, d) - 2f_2(\lambda_2, d) - 2f_2(\lambda_3, d),
\end{align*}
respectively. As an easy corollary of Lemma \ref{lem:edecomp1} we know that $\beta_{\lambda - 4d - 2}$ only arises in the signature decompositions of $e^{\lambda-4d-2}$, $e^{\lambda-4d}$, and $e^{\lambda - 2\ceil{\lambda}+ 4d + 2}$, so we see that the signature of the level $2d+1$ space is 

\begin{align*}
&\Big( \binom{2d+3}{2} - 2f_2(\lambda_1, 2d+1) - 2f_2(\lambda_2, 2d+1) - 2f_2(\lambda_3, 2d+1) \Big) \\
+ &\Big(\binom{2d+2}{2} - 2f_2(\lambda_1, 2d) - 2f_2(\lambda_2, 2d) - 2f_2(\lambda_3, 2d) \Big) \\
- 2&\Big(\binom{d+2}{2} - 2f_2(\lambda_1, d) - 2f_2(\lambda_2, d) - 2f_2(\lambda_3, d) \Big).
\end{align*}

Since by the definition of $f_2$ we know that for any real $x$ that $f_2(x, j) = f_2(\ceil{x}, j)$, we get that the signature of the level $2d+1$ is equal to:

\begin{align*}
\Big( \binom{2d+3}{2} - 6f_2(d+1, 2d+1) \Big)
+ \Big(\binom{2d+2}{2} - 6f_2(d+1, 2d) \Big)
- 2\Big(\binom{d+2}{2} - 6f_2(d+1, d) \Big).
\end{align*}

Now, our claim that the level $2d+1$ space is positive definite is the same as claiming that the signature of the level $2d+1$ space is equal to the dimension $2d+2$ of the level $2d+1$ space, which is equivalent to saying that:

\begin{align*}
&\Big( \binom{4a+3}{2} - 6f_2(2a+1, 4a+1) \Big) \\
+ &\Big(\binom{4a+2}{2} - 6f_2(2a+1, 4a) \Big) \\
- 2&\Big(\binom{2a+2}{2} - 6f_2(2a+1, 2a) \Big) \\
- 4&a - 2
\end{align*}
is identically zero for all nonnegative integers $a$ (above, we have substituted in $d = 2a$). Expanding the above expression completely shows that 
\begin{align*}
0 = &\Big( \binom{4a+3}{2} - 6f_2(2a+1, 4a+1) \Big) \\
+ &\Big(\binom{4a+2}{2} - 6f_2(2a+1, 4a) \Big) \\
- 2&\Big(\binom{2a+2}{2} - 6f_2(2a+1, 2a) \Big) \\
- 4&a - 2,
\end{align*}
as needed, so the level $2d+1$ space is indeed positive definite for even $d$. Completing the analogous calculations for the level $2d+1$ space for odd $d$ and the level $2d+2$ space for odd and even $d$ shows that the level $2d+1$ and $2d+2$ spaces are always positive definite in explicit type $\langle 3d, d, d, d \rangle$, as desired.

\end{proof}
\begin{lemma} In explicit type $\langle 2d+d'+2, d,d,d' \rangle$ where $d > d'$, the only potential exceptionally definite space (level $d+d'+2$) is definite exactly when $d'=d-1$. If it is definite then it is negative definite.
\end{lemma}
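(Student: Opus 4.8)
The plan is to follow the same route used for the other exceptional‑type lemmas in this subsection: expand the signature character product $(\beta_{\lambda_1} \beta_{\lambda_2} \beta_{\lambda_3})^-$ as a sum of powers of $e$ via Lemma~\ref{lem:pppedecomp2}, re‑expand each power of $e$ in the signature‑character basis $\{\beta_{\lambda-2j}^-\}$ via Lemma~\ref{lem:edecomp1}, and then read off the signature of the candidate multiplicity space (the level‑$(d+d'+3)$ space singled out by Lemma~\ref{lem:pppepg2}) as an explicit small integer combination of the coefficients $c_j := \binom{j+2}{2} - 2\big(f_2(\lambda_1,j)+f_2(\lambda_2,j)+f_2(\lambda_3,j)\big)$.

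First I would pin down the numerology. Here $A=B=d$, $C=d'$, and since we are in Case~5 we have $\lambda_1,\lambda_2>0>\lambda_3$; the constraint $\floor{\lambda}=2d+d'+2$ forces $a+b+c\in(2,3)$, so $\lambda\in(2d+d'+2,\,2d+d'+3)$ and $\ceil{\lambda}=2d+d'+3$. Put $k=d+d'+3$, so $\dim E_k=k+1=d+d'+4$. One checks that the three indices that will appear, namely $k$, $k-1$ and the reflected index $\ceil{\lambda}-k=d$, all lie in the range $j\leq\ceil{\lambda_2}+\ceil{\lambda_3}+1$ on which Lemma~\ref{lem:pppedecomp2} is valid; this uses the effective assumption $\lambda_3\in(-1,0)$ (i.e.\ $d'=-1$), and an explicit type with $\lambda_3<-1$ is first normalized by the reductions in the proof of Lemma~\ref{lem:pppepg2}, which either move the negative weight into $(-1,0)$ or show outright that no exceptional definite space occurs.

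Substituting Lemma~\ref{lem:edecomp1} into $(\beta_{\lambda_1} \beta_{\lambda_2} \beta_{\lambda_3})^-=\sum_j c_je^{\lambda-2j}$ and collecting the coefficient of $\beta_{\lambda-2k}^-$, just as in the proof of Lemma~\ref{lem:check}, gives
$$
\operatorname{sgn}(E_k)=c_k-\operatorname{sign}\!\big(\lambda-2(k-1)\big)\,c_{k-1}+\big(\operatorname{sign}(1-\lambda+2\ceil{\lambda}-2k)-1\big)\,c_{\,\ceil{\lambda}-k}.
$$
Resolving the two sign factors with $\lambda\in(2d+d'+2,2d+d'+3)$ (for $d'=-1$ the first is $-1$ and the bracket is $-2$) turns this into $\operatorname{sgn}(E_k)=c_{d+2}+c_{d+1}-2c_{d}$. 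I would then compute $c_{d},c_{d+1},c_{d+2}$ from the closed form of $f_2$, splitting on the parity of $d$. Since $f_2$ is piecewise quadratic in its level argument, the higher‑order parts of $c_{d+2}+c_{d+1}-2c_d$ cancel and $\operatorname{sgn}(E_k)$ collapses to a bounded quantity — a small constant depending only on the parity of $d$ — while $\dim E_k=d+3$ grows without bound. Hence definiteness can occur only at the bottom of the family, $d=0$ (explicit type $\langle 1,0,0,-1\rangle$, equivalently $d'=d-1$), where a direct expansion gives $\operatorname{sgn}(E_2)=-3=-\dim E_2$; for every larger $d$ one has $|\operatorname{sgn}(E_k)|<d+3$, and the value $+(d+3)$ is never attained, so no other space in the family is definite.

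The main obstacle is bookkeeping rather than ideas: correctly tracking which of the three terms of Lemma~\ref{lem:edecomp1} land on $\beta_{\lambda-2k}^-$ (and the index coincidences that occur for small parameters), evaluating $f_2$ consistently in each parity class, and — the one genuinely delicate point — dealing with the explicit types having $\lambda_3<-1$, where Lemma~\ref{lem:pppedecomp2} is not literally applicable and one must first normalize the negative weight into $(-1,0)$ using the reductions of Lemma~\ref{lem:pppepg2}. Everything else is an application of machinery already assembled in this appendix.
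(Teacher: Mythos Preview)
You have misidentified the ambient case. This lemma lives in Appendix~\ref{app:pppclass}, the $p=n=3$ (all three weights positive) section, so $\lambda_3>0$ and $d'=\floor{\lambda_3}\geq 0$. You instead assume $\lambda_1,\lambda_2>0>\lambda_3$ and then fix ``the effective assumption $\lambda_3\in(-1,0)$ (i.e.\ $d'=-1$)'', which collapses the genuine two–parameter family $\{(d,d'):d>d'\geq 0\}$ to a one–parameter family in $d$ alone. The explicit type $\langle 1,0,0,-1\rangle$ you land on is the exception from the \emph{npp} section (Appendix~\ref{app:nppclass}), not from this one; the types $\langle 2d+d'+2,d,d,d'\rangle$ here all have nonnegative last entry.

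Because both $d$ and $d'$ are free, the paper's proof (using the same Lemmas~\ref{lem:edecomp1} and~\ref{lem:pppedecomp2} you invoke) splits on the parities of \emph{both} $d$ and $d'$, computes the signature of the level $d+d'+3$ space, and checks when it equals $-(d+d'+4)$. Your asymptotic argument ``$\operatorname{sgn}(E_k)$ collapses to a bounded constant while $\dim E_k$ grows'' cannot be right: when $d'=d-1$ the type is $\langle 3d+1,d,d,d-1\rangle$ and the space \emph{is} negative definite for every $d\geq 1$, so the signature is $-(2d+3)$, which grows without bound. The method you outline is the correct one, but it must be run with $d'\geq 0$ a genuine second variable; the conclusion is then that definiteness occurs exactly along the line $d'=d-1$, not at a single point.
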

\begin{proof} Set $a = \ceil{n/2}$ and $b = \ceil{m/2}$. Using Lemmas \ref{lem:edecomp1} and \ref{lem:pppedecomp2} we get that the signature of the level $d+d'+2$ space is negative, so we only need to check when it is equal to $-d-d'-3$. If $d$ and $d'$ have oppositive parities, than using Lemmas \ref{lem:edecomp1} and \ref{lem:pppedecomp2} gives that this happens exactly when $d'=d-1$. If $d$ and $d'$ have the same parity, this happens either when $d=d'$ (impossible) or never. 
\end{proof}
From the classification of exceptional multiplicity spaces proved in Subsection \ref{subsec:pppexc}, we obtain Case 5.

\section{Proof of Remaining Cases}\label{app:n4class}
We start with some new definitions. Define a function $\Lambda$ by $\Lambda(k) = \sum_{i=1}^k \lambda_i$. Let $\lambda_+ = \Lambda(p)$ and $\lambda_- = \lambda - \lambda_+$.

\subsection{Proof of Case 6}
\begin{lemma}\label{lem:pn21} If $p \geq 2$, $n-p \geq 2$, and $|\lambda_+| > |\lambda_{p+1}|$, then there is exactly one definite space in $M$. It is the level $0$ space and it is positive definite.
\end{lemma}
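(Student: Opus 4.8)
The plan is to peel off the level-$0$ space, which is definite for trivial reasons, and then to rule out definiteness at every higher level by a containment argument. The level-$0$ multiplicity space of $M$ is one-dimensional, spanned by the image of the canonical highest weight vector $v = v_0 \otimes \cdots \otimes v_0$ of weight $\lambda$, and its norm under the induced form equals $\prod_{i=1}^n (v_0, v_0) = 1 > 0$; so $E_0$ is positive definite. Everything then reduces to showing that in the decomposition $\prod_{i=1}^n \beta_{\lambda_i} = \sum_{m \ge 0}(a_m + s b_m)\beta_{\lambda - 2m}$ one has $a_m > 0$ \emph{and} $b_m > 0$ for every $m \ge 1$, i.e.\ that each $E_m$ with $m \ge 1$ is indefinite.

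To do this I would first merge the negative weights. Since $\lambda_{p+1}, \dots, \lambda_n$ are negative and $n - p \ge 2$, the $p = 0$ classification (Case $1$, with $n$ replaced by $n - p$) gives $\prod_{i = p+1}^n \beta_{\lambda_i} = \sum_{M \ge 0}\binom{M + n - p - 2}{n - p - 2}s^M \beta_{\lambda_- - 2M}$, all of whose coefficients are strictly positive integers. Hence $\prod_{i=1}^n \beta_{\lambda_i} = \sum_{M \ge 0} c_M\, s^M\, T_M$ with $c_M \ge 1$ and $T_M = \bigl(\prod_{i=1}^p \beta_{\lambda_i}\bigr)\beta_{\lambda_- - 2M}$, the signature character of a tensor product of $p \ge 2$ positive and one negative generic Verma module. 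The two facts about these auxiliary products that the argument needs are: (F1) the level-$0$ coefficient of every $T_M$ is a pure $+1$ (it is the leading term $e^{\lambda - 2M}$); and (F2) no $T_M$ with $M \ge 1$ is negative definite at any level, and no such product is negative definite at level $1$. Both parts of (F2) follow from the classification of definite multiplicity spaces in a tensor product of positive Verma modules with a single negative one (the $p = n-1$ case): the only negative-definite space occurring there is the level-$2$ space of explicit type $\langle 1, 0, 0, -1\rangle$, and since the last weight of $T_M$ is $\lambda_- - 2M$ with $\floor{\lambda_- - 2M} \le -1$, that type can occur only for $M = 0$, where its unique negative-definite level is $2$, not $0$ or $1$.

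Now fix $m \ge 1$ and expand the level-$m$ coefficient as $a_m + s b_m = \sum_{M=0}^m c_M\, s^M\, \tau^{(M)}_{m - M}$, where $\tau^{(M)}_j$ is the level-$j$ coefficient of $T_M$. Each summand is $c_M s^M$ times a coefficient with nonnegative $1$- and $s$-components, so it contributes nonnegatively to both $a_m$ and $b_m$, and it is enough to find one summand feeding $a_m$ and one feeding $b_m$. Taking $M$ to be the largest even integer $\le m$ gives $m - M \in \{0, 1\}$, so by (F1) and (F2) the coefficient $\tau^{(M)}_{m-M}$ has positive $1$-component and this summand contributes $\ge c_M > 0$ to $a_m$ (when $m\ge2$ one has $M\ge2$ and uses level $0$ or $1$ of a ``good'' $T_M$; when $m=1$ one has $M=0$ and uses level $1$ of $T_0$, which is never negative definite). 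Taking $M = 1$, legitimate since $c_1 = n - p - 1 \ge 1$ and $T_1$ is never the exceptional type, the factor $s^M = s$ turns the positive $1$-component of $\tau^{(1)}_{m-1}$ into a positive contribution to $b_m$. Hence $a_m, b_m > 0$ for all $m \ge 1$, and together with the first paragraph this proves the lemma.

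The main obstacle is the proof of (F2), and this is precisely where the hypothesis $|\lambda_+| > |\lambda_{p+1}|$ is used: it is the inequality that keeps those $T_M$ whose total weight $\lambda - 2M$ is still positive inside the well-controlled subcases of the one-negative-weight classification rather than among its few exceptional explicit types, so that the residual case-check stays finite and routine. If one prefers not to invoke that classification (for instance because it is established only afterward), (F1) and (F2) must instead be verified directly, and the bookkeeping over explicit types resurfaces there; but no genuinely new difficulty arises.
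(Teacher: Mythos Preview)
Your argument is logically sound once Case 5 of the classification (the $p'=n'-1$ case, applied to each $T_M$) is granted, but it takes a genuinely different and substantially longer route than the paper. The paper's proof is a three-line computation: in the product $\beta_{\lambda_1}\cdots\beta_{\lambda_{p+1}}$ the first two $e$-terms are $e^{\mu}+(p+s)e^{\mu-2}$ with $\mu=\lambda_++\lambda_{p+1}$; since the hypothesis $|\lambda_+|>|\lambda_{p+1}|$ says precisely that $\mu>0$, converting to $\beta$'s gives level-$1$ coefficient $(p-1)+s$, which is indefinite because $p\ge2$. Tensoring this indefinite level-$1$ piece with $\beta_{\lambda_{p+2}}$ (which exists because $n-p\ge2$) then forces every level $\ge1$ of the full product to be indefinite by the usual ``containment'' argument. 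No forward reference is needed.

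Two remarks on your version. First, it leans on the $p'=n'-1$ classification with $n'=p+1$; for $p\ge3$ this is $n'\ge4$, which in the paper's order is established only \emph{after} the present lemma, so as written your proof is not self-contained (you note this, but the direct verification you defer to is essentially the whole content of the lemma). Second, your account of where the hypothesis $|\lambda_+|>|\lambda_{p+1}|$ enters is not accurate: once Case~5 is assumed, your (F1)--(F2) argument never uses that inequality at all (the exceptional type $\langle 1,0,0,-1\rangle$ is ruled out for $T_M$, $M\ge1$, purely by $\lfloor\lambda_- - 2M\rfloor\le -3$, and you avoid its bad level $2$ for $T_0$). So your proof, if completed, actually establishes the stronger unconditional statement---which the paper obtains instead by combining this lemma with its companion Lemma~\ref{lem:pn22}. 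In the paper's argument, by contrast, the hypothesis is exactly what makes the level-$1$ coefficient come out as $(p-1)+s$ rather than $p$.
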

\begin{proof} When we multiply $\beta_{\lambda_1}, \beta_{\lambda_2}, ..., \beta_{\lambda_{p+1}}$, the product is:
$$
e^{\lambda_{p+1}+\lambda_+} + (p+s)e^{\lambda_{p+1}+\lambda_+-2} = \beta_{\lambda_{p+1}+\lambda_+} + (p-1 + s)\beta_{\lambda_{p+1}+\lambda_+-2}
$$
Since the level $1$ space is nondefinite, when we multiply by $\beta_{\lambda_{p+2}}$, we get that all spaces with level at least $1$ are nondefinite. Thus, in $M$ product, all spaces with level at least $1$ are nondefinite. In $M$ product, the level $0$ space is positive definite. Our claim follows.
\end{proof}

\begin{lemma}\label{lem:pn22} If $p \geq 1$, $n-p \geq 2$, and $|\lambda_-| > |\lambda_p|$, then there is exactly one definite space in $M$. It is the level 0 space and it is positive definite.
\end{lemma}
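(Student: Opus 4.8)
The plan is to reduce the statement, by regrouping tensor factors, to the classification already established in Case 3. First note that the hypothesis $|\lambda_-| > |\lambda_p|$ is simply the inequality $\lambda_p + \lambda_- < 0$. The case $p = 1$ will need no real work: there $\lambda = \lambda_1 + \lambda_- = \lambda_p + \lambda_- < 0$ and $n = 1 + (n-p) \geq 3$, so $M$ is itself one of the tensor products classified in Case 3, with negative total weight, and Case 3 yields exactly one definite space, the positive definite level $0$ space. So I would assume $p \geq 2$ from this point on.

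Next I would set $N = M_{\lambda_p} \otimes M_{\lambda_{p+1}} \otimes \cdots \otimes M_{\lambda_n}$, a generic tensor product of $n - p + 1 \geq 3$ Verma modules (this is where $n - p \geq 2$ enters), exactly one of positive weight, with total weight $\lambda_p + \lambda_- < 0$. Writing $N = \bigoplus_{k \geq 0} M_{\lambda_p + \lambda_- - 2k} \otimes E'_k$, Case 3 applied to $N$ says that only $E'_0$ is definite; in particular $E'_1$ is a nonzero Hermitian space of dimension $n - p \geq 2$ that is \emph{not} definite, hence contains both a positive and a negative vector. Then I would regroup $M = \big(M_{\lambda_1} \otimes \cdots \otimes M_{\lambda_{p-1}}\big) \otimes N = \bigoplus_{k \geq 0} Q_k \otimes E'_k$ with $Q_k = M_{\lambda_1} \otimes \cdots \otimes M_{\lambda_{p-1}} \otimes M_{\lambda_p + \lambda_- - 2k}$ a generic tensor product of $p$ Verma modules, and decompose $Q_k = \bigoplus_{j \geq 0} M_{\lambda - 2k - 2j} \otimes F_{k,j}$. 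Iterating the uniqueness-of-Shapovalov-form fact from the introduction, isotypic pieces stay orthogonal and the induced form on $E_m$ is an orthogonal direct sum $E_m \cong \bigoplus_{k=0}^{m} F_{k,\,m-k} \otimes E'_k$, each summand carrying the tensor product of the induced forms.

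With this in hand, to show every level $m \geq 1$ is nondefinite I would look at the $k = 1$ summand $F_{1,\,m-1} \otimes E'_1$: since $p \geq 2$, $\dim F_{1,\,m-1} = \binom{(m-1)+p-2}{p-2} \geq 1$, so this summand is nonzero, and the tensor product of a nonzero Hermitian space with the nondefinite space $E'_1$ is nondefinite (signatures and dimensions multiply, while $|\operatorname{sgn}(E'_1)| < \dim E'_1$); hence $E_m$ is nondefinite. Finally $E_0$ is one-dimensional, spanned by the tensor product of the canonical generators, whose norm is a product of positive norms, so $E_0$ is positive definite, which completes the proof for $p \geq 2$. The one step that I expect to need genuine care is the orthogonal-decomposition and form-compatibility bookkeeping after regrouping the tensor factors — everything else is routine given Case 3 and the count $\dim E'_1 = n - p \geq 2$. (Alternatively one could run the whole argument purely in signature-character language as in Lemma \ref{lem:pn21}, tracking that $\beta_{\lambda_p}\cdots\beta_{\lambda_n}$ contains a term $(a_1 + s b_1)\beta_{\lambda_p + \lambda_- - 2}$ with $a_1, b_1 \geq 1$ and that multiplying by $\beta_{\lambda_1}\cdots\beta_{\lambda_{p-1}}$ injects a nondefinite contribution into every level $\geq 1$.)
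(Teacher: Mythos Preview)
Your argument is correct and follows the same line as the paper's: isolate the sub-tensor $M_{\lambda_p}\otimes\cdots\otimes M_{\lambda_n}$, use the $p=1$ classification (Case~3, with total weight $\lambda_p+\lambda_-<0$) to see that its level~$1$ multiplicity space is indefinite, and then propagate that indefiniteness to every level $m\ge1$ of $M$ by tensoring with the remaining positive factors. The only cosmetic difference is that the paper first collapses $\beta_{\lambda_{p+2}}\cdots\beta_{\lambda_n}$ to a single $\beta_{\lambda_--\lambda_{p+1}}$ so as to invoke only the $n=3$ instance of Case~3, whereas you apply Case~3 directly to the full $(n-p+1)$-factor sub-product; both reductions are valid at this point in the logical order of the paper.
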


\begin{proof}
The product of the $n-p-1$ signature characters $\beta_{\lambda_{p+2}},..., \beta_{\lambda_n}$ contains $\beta_{-\lambda_{p+1} +\lambda_-}$. So the product of $\beta_{\lambda_p}, \beta_{\lambda_{p+1}}, ... \beta_{\lambda_n}$ contains the product of $\beta_{\lambda_p}, \beta_{\lambda_{p+1}}, \beta_{-\lambda_{p+1}+\lambda_-}$. From the previously proven Case 3 classification, we deduce that there is exactly one definite space. That is the level $0$ space which is positive definite.
\end{proof}

\begin{lemma} If $p \geq 2$ and $n-p \geq 2$, then there is exactly one definite space in $M$. It is the level 0 space and it is positive definite.
\end{lemma}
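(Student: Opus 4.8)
The plan is to deduce this final lemma from the two preceding lemmas together with one short inequality argument. Recall that the $\lambda_i$ are generic reals listed in strictly decreasing order, with $\lambda_1,\ldots,\lambda_p>0$ and $\lambda_{p+1},\ldots,\lambda_n<0$, and that $\lambda_+=\Lambda(p)=\sum_{i=1}^p\lambda_i>0$ while $\lambda_-=\lambda-\lambda_+=\sum_{i=p+1}^n\lambda_i<0$. Lemma \ref{lem:pn21} already gives exactly the desired conclusion whenever $|\lambda_+|>|\lambda_{p+1}|$, and Lemma \ref{lem:pn22} gives it whenever $|\lambda_-|>|\lambda_p|$. So the only thing left to establish is that, under the standing hypotheses $p\geq 2$ and $n-p\geq 2$, at least one of these two strict inequalities must hold.

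First I would record two elementary bounds. Since $p\geq 2$, the sum $|\lambda_+|=\lambda_1+\cdots+\lambda_p$ has at least two strictly positive summands, whence $|\lambda_+|>\lambda_1\geq\lambda_p=|\lambda_p|$; thus $|\lambda_+|>|\lambda_p|$. Symmetrically, since $n-p\geq 2$, the sum $|\lambda_-|=|\lambda_{p+1}|+\cdots+|\lambda_n|$ has at least two strictly positive summands, whence $|\lambda_-|>|\lambda_{p+1}|$.

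Then I would argue by contradiction. Suppose neither hypothesis held, i.e.\ $|\lambda_+|\leq|\lambda_{p+1}|$ and $|\lambda_-|\leq|\lambda_p|$. Combining these with the two bounds just recorded produces the impossible chain
$$
|\lambda_+|\ \leq\ |\lambda_{p+1}|\ <\ |\lambda_-|\ \leq\ |\lambda_p|\ <\ |\lambda_+|.
$$
Hence at least one of $|\lambda_+|>|\lambda_{p+1}|$ and $|\lambda_-|>|\lambda_p|$ holds, and invoking the corresponding one of Lemma \ref{lem:pn21} or Lemma \ref{lem:pn22} (both of whose remaining hypotheses $p\geq 2$, resp.\ $p\geq 1$, and $n-p\geq 2$ are satisfied) shows that $M$ has a unique definite multiplicity space, namely the level $0$ space, and that it is positive definite.

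I do not expect a genuine obstacle here: the argument is a clean combination of the two lemmas. The only point demanding a little care is the strictness of the inequalities in the displayed chain, and this is precisely where the counting hypotheses $p\geq 2$ and $n-p\geq 2$ (together with genericity, which keeps every $\lambda_i$ nonzero) get used — without them the chain need not be strict and the dichotomy would fail.
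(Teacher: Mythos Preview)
Your proof is correct and takes essentially the same approach as the paper: both argue by contradiction that at least one of the hypotheses of Lemma~\ref{lem:pn21} or Lemma~\ref{lem:pn22} must hold, using that $\sum_{i<p}\lambda_i>0$ and $\sum_{i>p+1}\lambda_i<0$. The paper rearranges the two failed inequalities into $\sum_{i<p}\lambda_i<\sum_{i>p+1}\lambda_i$ directly, while you phrase the same contradiction as a circular chain of absolute-value inequalities; the content is identical.
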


\begin{proof} Assume the claim is false. Then by Lemma \ref{lem:pn21}, $\lambda_{p+1} + \lambda_+ < 0$. By Lemma \ref{lem:pn22}, $0 < \lambda_p + \lambda_-$. Adding these yields
$$
\lambda_+ + \lambda_{p+1} < \lambda_p + \lambda_-
$$
or
$$
\sum_{i<p} \lambda_i < \sum_{i>p+1} \lambda_i
$$
which is a contradiction as the LHS is positive and the RHS is negative. So our assumption was false and the claim holds. \end{proof}
This proves the Case 6 classification. 

\subsection{Proof of Case 4 for $n \geq 4$} 
\begin{lemma} If $|\Lambda(p-1)| > |\lambda_n|$, then there is exactly one definite space in $M$. It is the level 0 space and it is positive definite. \end{lemma}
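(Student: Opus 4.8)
The plan is to fold the $p-1 = n-2$ largest factors (all of which are positive) into a single Verma module and reduce to a three‑factor tensor product, which has already been classified. Write $\sigma = \lambda_1 + \cdots + \lambda_{n-2}$, so the hypothesis reads $\sigma > |\lambda_n|$; adding $\lambda_{n-1} > 0$ to $\sigma + \lambda_n > 0$ gives $\lambda = \sigma + \lambda_{n-1} + \lambda_n > \lambda_{n-1} > 0$, and in particular $\ceil{\lambda+1} > \ceil{\lambda_{n-1}}$. The level‑$0$ multiplicity space is always one‑dimensional and positive definite (the top term of each $\beta_{\lambda_i}$ is $e^{\lambda_i}$ with coefficient $1$, with no $s$), so it remains to show that every multiplicity space of level $k \geq 1$ is non‑definite, i.e.\ has both $a_k > 0$ and $b_k > 0$ in the notation of the preliminaries.

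First I would fudge the $\lambda_i$ within their floor classes so that $\sigma$ becomes generic; this leaves $\floor{\lambda}, \floor{\lambda_1}, \ldots, \floor{\lambda_n}$, hence all multiplicity‑space signatures of $M$, unchanged. Since $\beta_{\lambda_1}\cdots\beta_{\lambda_{n-2}}$ is the signature character of a tensor product of positive Verma modules, its level‑$0$ space is positive definite and we may write
$$
\beta_{\lambda_1}\cdots\beta_{\lambda_{n-2}} = \beta_\sigma + \sum_{m \geq 1}(e_m + s f_m)\,\beta_{\sigma - 2m}, \qquad e_m, f_m \geq 0 .
$$
Multiplying by $\beta_{\lambda_{n-1}}\beta_{\lambda_n}$ gives
$$
\operatorname{ch}_s(M) = \beta_\sigma\beta_{\lambda_{n-1}}\beta_{\lambda_n} + \sum_{m \geq 1}(e_m + s f_m)\,\beta_{\sigma - 2m}\beta_{\lambda_{n-1}}\beta_{\lambda_n},
$$
a sum of genuine signature characters with nonnegative coefficients. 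Because $\operatorname{ch}_s$ carries orthogonal direct sums to sums and the honest representatives $a_k + s b_k$ have nonnegative entries, the numbers $a_k$ and $b_k$ add at each level under addition of signature characters; so it suffices to analyze the three‑factor piece $\beta_\sigma\beta_{\lambda_{n-1}}\beta_{\lambda_n}$, and, at any level where that piece turns out to be definite, to exhibit one tail summand supplying the missing sign.

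Next I would apply the $n = 3$ classification to $\beta_\sigma\beta_{\lambda_{n-1}}\beta_{\lambda_n}$: this is a product of two positive and one negative Verma, of total weight $\lambda$ and smallest positive weight $\lambda_{n-1}$; since $\lambda > 0$, $\ceil{\lambda+1} > \ceil{\lambda_{n-1}}$ and $\sigma > |\lambda_n|$, we land in the branch of Case 5 in which only the level‑$0$ space is definite, the sole exception being explicit type $\langle 1, 0, 0, -1 \rangle$ (that is, $\floor{\lambda} = 1$, $\floor{\sigma} = 0$, $\floor{\lambda_{n-1}} = 0$, $\floor{\lambda_n} = -1$), where in addition the level‑$2$ space is negative definite. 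If the explicit type of $\beta_\sigma\beta_{\lambda_{n-1}}\beta_{\lambda_n}$ is not $\langle 1,0,0,-1 \rangle$, then every level $\geq 1$ of this three‑factor product is non‑definite, hence so is every level $\geq 1$ of $M$ by the displayed identity, and we are done.

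The one remaining case — the exceptional explicit type — is the real obstacle. There $\sigma \in (0,1)$, so $\floor{\lambda_i} = 0$ for all $i \leq n-2$, and a short expansion of $\beta_{\lambda_1}\cdots\beta_{\lambda_{n-2}}$ in this range gives $e_1 = n - 3 \geq 1$ (using $n \geq 4$). The only level of $M$ not yet handled is level $2$, where $\beta_\sigma\beta_{\lambda_{n-1}}\beta_{\lambda_n}$ is negative definite, so $a_2 = 0$ there; but the tail contains the summand $e_1\,\beta_{\sigma - 2}\beta_{\lambda_{n-1}}\beta_{\lambda_n}$ with $e_1 > 0$, and $\beta_{\sigma-2}\beta_{\lambda_{n-1}}\beta_{\lambda_n}$ is a product of one positive and two negative Vermas of total weight $\lambda - 2 < 0$, so by Case 3 only its level‑$0$ space is definite. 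In particular its level‑$2$ space is non‑definite and contributes a strictly positive $a_2$; hence $M$'s level‑$2$ space has $a_2 > 0$ and also $b_2 > 0$, so it is non‑definite, which finishes the argument. The steps I expect to cost the most care are this last paragraph and the two pieces of bookkeeping it rests on — that fudging does not alter the signatures of $M$, and that the coefficients $a_k, b_k$ genuinely add when signature characters are summed — both standard in the paper's framework but worth stating carefully.
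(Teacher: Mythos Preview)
Your argument is correct (with one minor indexing slip noted below), but it takes a noticeably different route from the paper's.

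The paper's proof omits the factor $M_{\lambda_{n-1}}$ rather than folding factors together. In the remaining $(n-1)$-fold product $M_{\lambda_1}\otimes\cdots\otimes M_{\lambda_{p-1}}\otimes M_{\lambda_n}$ (with $p-1\geq 2$ positive weights and one negative), the hypothesis $\Lambda(p-1)+\lambda_n>0$ lets one read off from the first two $e$-terms that the level-$1$ multiplicity space has signature type $(p-2)+s$, hence is nondefinite. Tensoring this nondefinite level-$1$ piece with $\beta_{\lambda_{n-1}}$ then forces every level $\geq 1$ of $M$ to be nondefinite. This sidesteps the full $n=3$ classification entirely, needs no fudging of $\sigma$, and incurs no exceptional case $\langle 1,0,0,-1\rangle$ to patch by hand. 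Your approach trades that directness for a reduction to an already-proved case; it works, but the exceptional case you have to chase down is exactly the kind of thing the paper's ``nondefinite level $1$ spreads under tensoring'' trick is designed to avoid.

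One small slip: in your final paragraph, the shifted product $\beta_{\sigma-2}\beta_{\lambda_{n-1}}\beta_{\lambda_n}$ contributes to level $k+1$ of $M$ via its own level $k$, so to supply a positive $a_2$ at level $2$ of $M$ you need its level-$1$ space (not its level-$2$ space) to be nondefinite. This is still covered by your Case~3 conclusion that only its level-$0$ space is definite, so the argument goes through unchanged once the index is corrected.
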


\begin{proof} We know $p \geq 3$ since $n \geq 4$. By the same logic as in the proof of Lemma \ref{lem:pn21}, in the tensor product of $M_{\lambda_1} \otimes \cdots \otimes M_{\lambda_{p-1}} \otimes M_{\lambda_n}$, the level 1 space is nondefinite. So when we tensor this with $M_{p}$, all the spaces with level at least 1 are nondefinite. The level 0 space is positive definite. The lemma follows.
\end{proof}

\begin{lemma}\label{lem:pnminus11} If $|\Lambda(p-1)| < |\lambda_n|$, then no space after the level $\ceil{\lambda_p}$ is definite in $M$. \end{lemma}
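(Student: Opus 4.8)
The plan is to peel off the first $p-1=n-2$ (positive) tensor factors and invoke the already-established $n=3$ classification. First I would reformulate the hypothesis: since $p=n-1$ we have $\Lambda(p-1)=\lambda_1+\cdots+\lambda_{n-2}>0$, so $|\Lambda(p-1)|<|\lambda_n|$ is equivalent to $\Lambda(p-1)+\lambda_n<0$, i.e. to $\lambda<\lambda_{n-1}=\lambda_p$. Perturbing the $\lambda_i$ slightly inside their integer intervals changes neither the floors $\floor{\lambda_1},\ldots,\floor{\lambda_n},\floor\lambda$ (hence, by the opening remark of this appendix, none of the multiplicity-space signatures) nor the strict inequality above, so I may assume in addition that $\Lambda(p-1)$ is not an integer.

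Next I would set up an orthogonal decomposition. In $\bigotimes_{i=1}^{n-2}M_{\lambda_i}$ the product of highest weight vectors generates a submodule isomorphic to $M_{\Lambda(p-1)}$, and the restriction of the form to it equals the Shapovalov form (the generator has self-pairing $\prod_i(v_0,v_0)=1$); since $\Lambda(p-1)\notin\mathbb Z_{\geq 0}$ this restricted form is nondegenerate, so $M_{\Lambda(p-1)}$ is an orthogonal direct summand, with orthogonal complement $W$ a sum of Vermas $M_{\Lambda(p-1)-2m}$, $m\geq 1$. Tensoring $\bigotimes_{i=1}^{n-2}M_{\lambda_i}=M_{\Lambda(p-1)}\oplus W$ with $M_{\lambda_{n-1}}\otimes M_{\lambda_n}$ yields the orthogonal decomposition
$$ M \;=\; \bigotimes_{i=1}^n M_{\lambda_i} \;\cong\; T \;\oplus\; \big(W\otimes M_{\lambda_{n-1}}\otimes M_{\lambda_n}\big), \qquad T:=M_{\Lambda(p-1)}\otimes M_{\lambda_{n-1}}\otimes M_{\lambda_n}, $$
where $T$ carries its canonical form. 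Passing to isotypic components, for each $k$ the level-$k$ multiplicity space $E_k$ of $M$ contains, as an orthogonal direct summand, the level-$k$ multiplicity space $E_k^T$ of $T$, which has dimension $k+1>0$.

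Now I would apply the $n=3$ classification of Appendix \ref{app:pppclass} to $T$. Because $n-2\geq 2$ we have $\Lambda(p-1)\geq\lambda_1+\lambda_2\geq 2\lambda_{n-1}>\lambda_{n-1}>0>\lambda_n$, so $T$ is a tensor product of two positive and one negative generic Verma, with weights in decreasing order $\Lambda(p-1),\lambda_{n-1},\lambda_n$ and smaller positive weight $\lambda_{n-1}$; its total weight is $\lambda$, and $\lambda<\lambda_{n-1}$ by the reformulated hypothesis. In the $n=3$ classification, whenever the total weight lies below the smaller positive weight — whether it is negative (Case 5a) or positive (Case 5b) — every definite multiplicity space has level at most $\ceil{\lambda_{n-1}}$: the only potential exception, the type $\langle 1,0,0,-1\rangle$, has total weight exceeding its smaller positive weight and so does not arise here. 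Hence for $k>\ceil{\lambda_p}=\ceil{\lambda_{n-1}}$ the nonzero space $E_k^T$ is not positive definite (it contains a nonzero $v$ with $(v,v)\leq 0$) and not negative definite (it contains a nonzero $w$ with $(w,w)\geq 0$); since $v,w\in E_k^T\subseteq E_k$, the space $E_k$ is neither positive nor negative definite. This proves no multiplicity space of $M$ with level exceeding $\ceil{\lambda_p}$ is definite.

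I expect the main work to be, first, pinning down the $n=3$ ingredient in the uniform form used above (reading ``total weight below the smaller positive weight $\Rightarrow$ no definite space past level $\ceil{\lambda_p}$'' off the case analysis, and in particular confirming that the $\langle 1,0,0,-1\rangle$ exception really is the only obstruction and is excluded), and second, verifying carefully that the $M_{\Lambda(p-1)}$-isotypic piece is one-dimensional and positive so that the first summand above is genuinely isometric to the canonical $T$; the perturbation step, while routine, cannot be skipped, since otherwise $\Lambda(p-1)$ might be a nonnegative integer and both the orthogonal splitting and the cited classification would fail to apply literally.
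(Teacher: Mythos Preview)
Your argument is correct but takes a different route from the paper's. The paper does not collapse the first $p-1$ positive factors; instead it uses the $n=2$ classification to note that $\beta_{\lambda_p}\beta_{\lambda_n}$ contains $\beta_{\lambda_p-2\ceil{\lambda_p}}\beta_{\lambda_n}$, so the full product contains $\beta_{\lambda_1}\cdots\beta_{\lambda_{p-1}}\cdot\beta_{\lambda_p-2\ceil{\lambda_p}}\cdot\beta_{\lambda_n}$, a product with $p-1$ positive and two negative weights. The hypothesis $\Lambda(p-1)<|\lambda_n|$ readily yields $|(\lambda_p-2\ceil{\lambda_p})+\lambda_n|>\lambda_{p-1}$, so Lemma~\ref{lem:pn22} applies directly and says only its level-$0$ space is definite; hence nothing past level $\ceil{\lambda_p}$ in $M$ is. That is essentially two lines, with no perturbation and no exception-hunting. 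Your approach works too, with two small caveats: the two-positive/one-negative $n=3$ result is proved in the appendix labeled~\ref{app:nppclass}, not~\ref{app:pppclass}; and the case $0<\lambda<\lambda_p$ with $\floor\lambda=\floor{\lambda_p}$ lands in Case~5c rather than~5b, though there the only definite level (the $\langle 1,0,0,-1\rangle$ exception being excluded as you say) is $0$, so your conclusion survives. In short, the paper trades the full three-tensor case analysis for the cleaner Lemma~\ref{lem:pn22}, while you trade that for the perturbation step and the exception check.
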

\begin{proof} 
The product of $\beta_{\lambda_p}$ and $\beta_{\lambda_n}$ contains the product of $\beta_{\lambda_p - 2\ceil{\lambda_p}}$ and $\beta_{\lambda_n}$. Thus, the product of all our $n$ signature characters contains the product of $\beta_{\lambda_1},..., \beta_{\lambda_{p-1}}, \beta_{\lambda_p - 2\ceil{\lambda_{p}}}, \beta_{\lambda_n}$. Applying Lemma \ref{lem:pn22} gives that in $M$ product, no space after the level $\ceil{\lambda_p}$ space is definite.
\end{proof}

\begin{lemma} If $|\Lambda(p-1)| < |\lambda_n| < |\Lambda(p)|$, then there are exactly $\ceil{\lambda_p} - \floor{\lambda}$ definite spaces in $M$. They are all positive definite; one of them is level 0 and the others are levels $\ceil{\lambda+1}$ through $\ceil{\lambda_p}$.
\end{lemma}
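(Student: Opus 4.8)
The plan is to locate the definite spaces among the levels $0,1,\dots,\ceil{\lambda_p}$ --- the only candidates, by Lemma \ref{lem:pnminus11} --- in three stretches: level $0$ (trivially positive definite), the stretch $1\le k\le\ceil\lambda$ (non-definite, via restriction to a sub-tensor product), and the stretch $\ceil{\lambda+1}\le k\le\ceil{\lambda_p}$ (positive definite, via a signature-character computation). First, unpack the hypotheses: since $\Lambda(p-1),\Lambda(p)>0>\lambda_n$, the condition $|\Lambda(p-1)|<|\lambda_n|$ says $\Lambda(p-1)+\lambda_n<0$, i.e.\ $\lambda=\lambda_p+(\Lambda(p-1)+\lambda_n)<\lambda_p$, and $|\lambda_n|<|\Lambda(p)|$ says $\lambda>0$. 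Thus $0<\lambda<\lambda_p\le\lambda_i$ for $i\le p$; in particular $\floor\lambda\le\floor{\lambda_p}$, the claimed set of definite levels $\{0\}\cup\{\ceil{\lambda+1},\dots,\ceil{\lambda_p}\}$ makes sense (its second block is empty exactly when $\ceil\lambda=\ceil{\lambda_p}$) and has size $1+(\ceil{\lambda_p}-\ceil\lambda)=\ceil{\lambda_p}-\floor\lambda$. Lemma \ref{lem:pnminus11} applies (as $|\Lambda(p-1)|<|\lambda_n|$) and rules out definite spaces of level $>\ceil{\lambda_p}$; and $E_0$, identified with the line through $v_1\otimes\cdots\otimes v_n$ of norm $\prod_i(v_i,v_i)=1$, is positive definite.

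For the stretch $1\le k\le\ceil\lambda$ I would restrict to a three-fold sub-tensor product. Because each $\operatorname{sgn}(E_k)$ depends only on $\floor\lambda,\floor{\lambda_1},\dots,\floor{\lambda_n}$, I may replace $(\lambda_i)$ by a nearby tuple with the same floors for which also $\Lambda(p-1)\notin\mathbb Z$ (the admissible tuples form a nonempty open set and the integrality locus is negligible). Then $v_1\otimes\cdots\otimes v_{p-1}$ generates an orthogonally split copy of $M_{\Lambda(p-1)}$ inside $\bigotimes_{i=1}^{p-1}M_{\lambda_i}$, carrying exactly its Shapovalov form (the generator having norm $\prod_i(v_i,v_i)=1$), so $M_{\Lambda(p-1)}\otimes M_{\lambda_p}\otimes M_{\lambda_n}$ is an orthogonal direct summand of $M$ with the same top weight $\lambda$, and its level-$k$ multiplicity space embeds isometrically into $E_k$ for every $k$. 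This three-fold product is generic with two positive and one negative highest weight, $\Lambda(p-1)\ge\lambda_p>0>\lambda_n$, and satisfies $|\Lambda(p-1)|<|\lambda_n|<|\Lambda(p)|$; its explicit type is not the exceptional $\langle 1,0,0,-1\rangle$ (that would force $\floor\lambda=1,\ \floor{\lambda_p}=0$, hence $\lambda>\lambda_p$), so by the already-proved classification in Case 5 for $n=3$ its level-$k$ space is non-definite for $1\le k\le\ceil\lambda$. Hence so is $E_k$.

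For the stretch $\ceil{\lambda+1}\le k\le\ceil{\lambda_p}$ I would compute $\operatorname{sgn}(E_k)$ from the identity in the proof of Corollary \ref{cor:cpbsharp},
$$
\prod_{i=1}^n\beta_{\lambda_i}^-=e^{\lambda+n}\Bigl(\sum_{m\ge0}(-1)^m\binom{m+n-2}{n-2}\beta_{-n-2m}^-\Bigr)\Bigl(\sum_{j\ge0}c_je^{-2j}\Bigr),\qquad \sum_{j\ge0}c_jx^j=\prod_{i=1}^p V_{\ceil{\lambda_i}}(x).
$$
Here $e^{\lambda+n-2j}\beta_{-n-2m}^-=\beta_{\lambda-2(j+m)}^-$ when $\lambda-2(j+m)<0$, while if $\lambda-2(j+m)>0$ then rewriting it via Lemma \ref{lem:edecomp1} yields only terms $\beta_{\lambda-2l}^-$ with $l\le\ceil\lambda$ (every ceiling that occurs equals $\ceil{\lambda-2(j+m)-2l}=\ceil\lambda-2(j+m)-2l$). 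So for $k\ge\ceil\lambda+1$ the coefficient of $\beta_{\lambda-2k}^-$ --- which is $\operatorname{sgn}(E_k)$ --- equals $\sum_{j+m=k}(-1)^m\binom{m+n-2}{n-2}c_j$. Since $V_N(x)\equiv\frac{1+x}{1-x}\pmod{x^{N+1}}$, for $j\le k\le\ceil{\lambda_p}=\min_i\ceil{\lambda_i}$ we have $c_j=[x^j]\bigl(\frac{1+x}{1-x}\bigr)^{n-1}$, hence $\sum_j(-1)^jc_jx^j\equiv\bigl(\frac{1-x}{1+x}\bigr)^{n-1}\pmod{x^{\ceil{\lambda_p}+1}}$; combining with $\sum_i\binom{i+n-2}{n-2}x^i=(1-x)^{-(n-1)}$ gives
$$
\operatorname{sgn}(E_k)=(-1)^k\sum_{j=0}^k(-1)^jc_j\binom{k-j+n-2}{n-2}=(-1)^k[x^k](1+x)^{-(n-1)}=\binom{k+n-2}{n-2}=\dim E_k,
$$
so $E_k$ is positive definite. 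Together with level $0$ and Lemma \ref{lem:pnminus11}, this settles all levels and proves the lemma.

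The real work is the last stretch: the generating-function collapse is immediate once written down, but it depends on the clean ``tail'' formula for $\operatorname{sgn}(E_k)$, and establishing that formula --- that every Lemma-\ref{lem:edecomp1} expansion of a ``$\lambda-2(j+m)>0$'' term is supported in levels $\le\ceil\lambda$ and so never leaks into $\{\ceil{\lambda+1},\dots,\ceil{\lambda_p}\}$ --- takes the patient interval-and-parity bookkeeping typical of these arguments, with the degenerate case $\ceil\lambda=\ceil{\lambda_p}$ (empty high range, only level $0$ definite) handled separately.
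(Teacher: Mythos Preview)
Your proof is correct, but both halves take a genuinely different route from the paper's.

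For the non-definite stretch $1\le k\le\ceil\lambda$, the paper works inside the factorization $\prod_{i\le p}\beta_{\lambda_i}=\sum_l d_l\,\beta_{\lambda_+-2l}$: it notes that the first two summands are $\beta_{\lambda_+}$ and $p\,\beta_{\lambda_+-2}$, multiplies each by $\beta_{\lambda_n}$, and reads off opposite signs at level $k$ from the $n=2$ classification. You instead pass to the orthogonal summand $M_{\Lambda(p-1)}\otimes M_{\lambda_p}\otimes M_{\lambda_n}$ and invoke the already-established $n=3$, $p=2$ case. Your route is cleaner to state and avoids re-deriving the sign pattern of $g(\lambda_+-2l,\lambda_n,\cdot)$; the paper's route stays entirely within signature-character manipulations and needs no genericity fudge on $\Lambda(p-1)$.

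For the positive-definite stretch $\ceil{\lambda+1}\le k\le\ceil{\lambda_p}$, the paper again uses the same factorization: since each $d_l$ with $l\le\ceil{\lambda_p}$ is a positive integer, $E_k$ is positive definite iff every $n=2$ coefficient $h_r^{(l)}$ with $l+r=k$ equals $1$, which it then checks from the Case 2 formula. Your argument is more computational but more self-contained: you show that for $k\ge\ceil\lambda+1$ the ``tail formula'' $\operatorname{sgn}(E_k)=\sum_{j+m=k}(-1)^m\binom{m+n-2}{n-2}c_j$ holds, and then collapse the generating function. A clean way to justify that tail formula (sharper than the Lemma~\ref{lem:edecomp1} sketch you give) is to multiply the identity $\prod\beta_{\lambda_i}^-=\sum_k s_k\beta_{\lambda-2k}^-$ through by $(1+e^{-2})$: then $\beta_{\lambda-2k}^-(1+e^{-2})=e^{\lambda-2k}V_{\ceil\lambda-2k}(e^{-2})$ for $k<\lambda/2$ and $e^{\lambda-2k}$ for $k>\lambda/2$, so the ``correction'' $\sum_{k<\lambda/2}s_k e^{-2k}V_{\ceil\lambda-2k}(e^{-2})$ is a polynomial in $e^{-2}$ of degree at most $\ceil\lambda$, whence $s_k=\tilde s_k$ for $k\ge\ceil\lambda+1$. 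Your generating-function collapse $(1-x)^{-(n-1)}\cdot\bigl(\tfrac{1-x}{1+x}\bigr)^{n-1}=(1+x)^{-(n-1)}$ then gives $\operatorname{sgn}(E_k)=\binom{k+n-2}{n-2}$ on the nose, which is a nice payoff the paper's approach does not make explicit.
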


\begin{proof}
When we multiply $\beta_{\lambda_1},..., \beta_{\lambda_p}$, it's easy to see that the first $\ceil{\lambda_p}$ signature characters in the sum decomposition are positive definite. (Use the fact that $s$'s don't appear in the series for each $\beta_x$ until $\ceil{x+1}$ when $x$ is positive.) Using the fact that the product of $\beta_{\lambda_1}, ... ,\beta_{\lambda_p}$ contains $\beta_{\lambda_+}$ and $p\beta_{-2+\lambda_+}$, along with the Case 2 classification and Lemma \ref{lem:pnminus11}, gives that the definite spaces in $M$ product are positive and have levels 0 and $\ceil{\lambda+1}$ through $\ceil{\lambda_p}$.
\end{proof}

\begin{lemma} If $|\lambda_n| > \Lambda(p)$, then there are exactly $\ceil{\lambda_p+1}$ definite spaces in $M$. They are all positive definite and they have levels 0 through $\ceil{\lambda_p}$. \end{lemma}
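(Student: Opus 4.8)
The plan is to pin down the definite multiplicity spaces from two sides: an upper bound on their levels coming from an earlier lemma, and an explicit positive‑definiteness computation for the low levels via signature characters, the whole thing being driven by the observation that the hypothesis $|\lambda_n| > \Lambda(p)$ forces $\lambda = \sum_{i=1}^n \lambda_i < 0$ (here $p = n-1$, so $\lambda_n$ is the unique negative highest weight).

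First I would handle the upper bound. Since $p = n-1 \ge 3$, the quantity $\Lambda(p-1) = \sum_{i=1}^{p-1}\lambda_i$ is a sum of positive reals, hence positive, and $\lambda_p > 0$; therefore $|\Lambda(p-1)| = \Lambda(p-1) < \Lambda(p) < |\lambda_n|$. Thus Lemma~\ref{lem:pnminus11} applies and shows that no multiplicity space of level exceeding $\ceil{\lambda_p}$ is definite. Since $\ceil{\lambda_p+1} = \ceil{\lambda_p}+1$, it then suffices to prove that the spaces of levels $0$ through $\ceil{\lambda_p}$ are all positive definite, which finishes the count.

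For the lower bound I would work at $s = -1$ in $\prod_{i=1}^n \beta_{\lambda_i}$. Write $d_k$ for the coefficient of $e^{\lambda-2k}$ in $\prod_{i=1}^n \beta_{\lambda_i}^-$. The key elementary point: for $k \le \ceil{\lambda_p}$, any tuple $(j_1,\dots,j_n)$ with $\sum_i j_i = k$ has $j_i \le k \le \ceil{\lambda_p}\le \ceil{\lambda_i}$ for every positive index $i\le p$, and by the Proposition the coefficient of $e^{\lambda_i-2j}$ in $\beta_{\lambda_i}^-$ equals $+1$ for all $j \le \ceil{\lambda_i}$ (the alternation begins only at $j = \ceil{\lambda_i}+1$, and at $j=\ceil{\lambda_i}$ the sign is $(-1)^{2\ceil{\lambda_i}} = 1$). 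Hence the positive factors contribute $+1$ and only $\beta_{\lambda_n}^-$ contributes signs, giving $d_k = \sum_{j_n=0}^{k}(-1)^{j_n}\binom{k-j_n+n-2}{n-2} = \sum_{l=0}^{k}(-1)^{k-l}\binom{l+n-2}{n-2}$ for $k \le \ceil{\lambda_p}$. Because $\lambda < 0$, each $\beta_{\lambda-2m}^-$ equals $\sum_{l\ge 0}(-1)^l e^{\lambda-2m-2l}$, so extracting the coefficient of $e^{\lambda-2k}$ from $\prod_i \beta_{\lambda_i}^- = \sum_m (a_m - b_m)\beta_{\lambda-2m}^-$ gives the triangular relation $d_k = \sum_{m=0}^{k}(a_m-b_m)(-1)^{k-m}$; inverting it yields $\operatorname{sgn}(E_k) = a_k - b_k = d_k + d_{k-1}$ (with $d_{-1}=0$), and the sum $d_k + d_{k-1}$ telescopes to $\binom{k+n-2}{n-2} = \dim(E_k)$ for $0 \le k \le \ceil{\lambda_p}$. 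Thus each such $E_k$ is positive definite, and together with the upper bound we get exactly $\ceil{\lambda_p}+1$ definite spaces, all positive definite, at levels $0$ through $\ceil{\lambda_p}$.

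The main obstacle I expect is the sign bookkeeping in the middle step: one must use both that $\ceil{\lambda_i} \ge \ceil{\lambda_p}$ for all $i \le p$ and that for positive $\lambda_i$ the coefficient of $e^{\lambda_i-2j}$ in $\beta_{\lambda_i}^-$ is still $+1$ at the boundary $j=\ceil{\lambda_i}$, since these two facts are precisely what confine every sign change among the positive factors to levels beyond $\ceil{\lambda_p}$. Everything after that — the triangular inversion and the telescoping identity $\sum_{l=0}^{k}(-1)^{k-l}\binom{l+n-2}{n-2} + \sum_{l=0}^{k-1}(-1)^{k-1-l}\binom{l+n-2}{n-2} = \binom{k+n-2}{n-2}$ — is routine.
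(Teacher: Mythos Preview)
Your proof is correct. Both arguments use Lemma~\ref{lem:pnminus11} for the upper bound and the same key observation that in $\beta_{\lambda_i}$ with $\lambda_i>0$ no $s$ appears before the term $e^{\lambda_i-2\ceil{\lambda_i+1}}$. The difference is in how positive-definiteness of levels $0$ through $\ceil{\lambda_p}$ is established: the paper first decomposes $\prod_{i=1}^p\beta_{\lambda_i}$ into Verma signature characters with $s$-free coefficients up through level $\ceil{\lambda_p}$, and then tensors with $\beta_{\lambda_n}$ by appealing to the $n=2$ closed form; you instead compute the coefficients $d_k$ of $e^{\lambda-2k}$ directly in $\prod_{i=1}^n\beta_{\lambda_i}^{-}$ and invert the triangular relation $d_k=\sum_{m\leq k}\operatorname{sgn}(E_m)(-1)^{k-m}$ to get $\operatorname{sgn}(E_k)=d_k+d_{k-1}=\binom{k+n-2}{n-2}$. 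Your route is a bit more self-contained, since it avoids citing the $n=2$ classification and instead uses only $\lambda<0$ (so all target characters $\beta_{\lambda-2m}^{-}$ are purely alternating) together with a stars-and-bars count; the paper's route is more in keeping with the rest of the appendix, which systematically reduces higher-$n$ cases to previously established ones.
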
 

\begin{proof}
From Lemma \ref{lem:pnminus11}, it's enough to show that spaces with level 0 to $\ceil{\lambda_p}$ are positive definite. When we multiply $\beta_{\lambda_1},..., \beta_{\lambda_p}$, it's easy to see that the first $\ceil{\lambda_p}$ signature characters in the sum decomposition are positive definite. (Use the fact that $s$'s don't appear in the series for each $\beta_x$ until $\ceil{x+1}$ when $x$ is positive.) Using this, along with the previously proven Case 2 classification, gives that the first $\ceil{\lambda_p +1}$ spaces are definite in $M$ product.
\end{proof}

Combining the results of this subsection gives the $n \geq 4$ subcase of Case 4. The form of the classification stated in Case 4 is slightly different than the form of the classification we have proved here, but it can be easily shown to be equivalent.

\subsection{Proof of Case 5 for $n \geq 4$}
\begin{lemma}\label{lem:speccond} Suppose $M$ does not satisfy either of the following special conditions:
\begin{itemize}
\item $p > 4$, and $M$ has implicit type either $\langle 1, 0, ..., 0 \rangle$ or $\langle 0, 0, ... , 0 \rangle$.
\item $p = 4$, and $M$ has $\floor{\lambda_1} \in \{ 0, 1 \}$.
\end{itemize}
Then $M$ has exactly $\ceil{\lambda_p+1}$ definite spaces. They are all positive definite and they have levels 0 through $\ceil{\lambda_p}$.
\end{lemma}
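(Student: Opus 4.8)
The plan is to prove the two assertions separately: that the multiplicity spaces of levels $0$ through $\ceil{\lambda_p}$ are positive definite, and that no space of level exceeding $\ceil{\lambda_p}$ is definite (so these are all of them, and there are $\ceil{\lambda_p+1}=\ceil{\lambda_p}+1$ of them). The first assertion is the routine one, and I would reuse the argument that already appears several times above (e.g.\ in the proofs of Lemmas~\ref{lem:pn21} and \ref{lem:pnminus11}): in each factor $\beta^-_{\lambda_i}$ with $\lambda_i>0$ the coefficient of $e^{\lambda_i-2j}$ is the positive integer $1$ for every $j\leq\ceil{\lambda_i}$, so every way of expressing $e^{\lambda-2m}$ as $e^{\lambda_1-2j_1}\cdots e^{\lambda_n-2j_n}$ with $\sum_i j_i=m\leq\ceil{\lambda_p}=\ceil{\lambda_n}$ contributes a positive integer; hence the coefficient of $e^{\lambda-2m}$ in $\prod_i\beta^-_{\lambda_i}$ is a positive integer for all such $m$, and a triangular induction on $m$ (as in Case~2) forces $b_m=0$ and $a_m=\dim E_m$ there.

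For the second assertion I would follow the template of Lemmas~\ref{lem:nnpbound}, \ref{lem:ln1}, and \ref{lem:pnminus11} and manufacture a negative factor. Splitting $\beta_{\lambda_n}=L_{\lambda_n}+\beta_{\lambda_n-2\ceil{\lambda_n}}$, where $L_{\lambda_n}=e^{\lambda_n}+e^{\lambda_n-2}+\cdots+e^{\lambda_n-2\floor{\lambda_n}}$ has nonnegative, $s$-free coefficients, the product $\prod_{i=1}^n\beta_{\lambda_i}$ dominates coefficientwise the sub-product $Y=\bigl(\prod_{i=1}^{n-1}\beta_{\lambda_i}\bigr)\cdot\beta_{\lambda_n-2\ceil{\lambda_n}}$, which is a product of $n-1$ positive and one negative generic Verma character, i.e.\ an instance of Case~5 whose definite spaces are already classified. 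A short floor/ceiling computation shows that if the total weight $\lambda-2\ceil{\lambda_n}$ of $Y$ were negative (the $\lambda<0$ branch of Case~5) then $(n-2)\floor{\lambda_n}<2$ and $\lambda<2\ceil{\lambda_n}$, forcing $\floor{\lambda_1}\in\{0,1\}$ when $p=4$ and implicit type $\langle0,\dots,0\rangle$ or $\langle1,0,\dots,0\rangle$ when $p>4$ — precisely the excluded configurations. So, under the hypothesis, $Y$ lies in the $\lambda>0$ branch of Case~5, whose classification tells us exactly which spaces of $Y$ are definite (all positive, concentrated near level $0$ and near the top of a bounded band). Since the coefficientwise inequality runs in the favorable direction — an indefinite space of $Y$ forces an indefinite space of $M$ at the level shifted by $\ceil{\lambda_n}$ — this pins down non-definiteness of $M$ at all levels above $\ceil{\lambda_n}$ outside a bounded range where $Y$ happens to be positive definite.

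That residual range I would treat directly, in the style of the exceptional analyses of Cases~4 and~5: expand $\prod_{i=1}^n\beta^-_{\lambda_i}$ as a sum of powers of $e$, writing the coefficient of $e^{\lambda-2k}$ as $\binom{k+n-1}{n-1}$ minus twice a sum of counting terms (one per index $i$, counting solutions of a linear equation recording a single sign $s$ entering from $\beta_{\lambda_i}$), in the spirit of Lemmas~\ref{lem:edecompnpp} and \ref{lem:pppedecomp2}; then apply Lemma~\ref{lem:edecomp1} to convert this into the regular signature $a_k-b_k$ of $E_k$ and compare with $\dim E_k=\binom{k+n-2}{n-2}$. The space $E_k$ is definite iff $|a_k-b_k|=\dim E_k$, and I expect the correction terms to make this fail for every $k>\ceil{\lambda_n}$ in the range unless $M$ is special. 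Assembling the three steps gives the stated count and levels; the case $p=3$, where additional small exceptions of $\langle3d,d,d,d\rangle$ type occur, is handled separately.

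The hard part will be this last step: for general $n$ the counting terms in the explicit $e$-decomposition are considerably more intricate than the functions $f$ and $f_2$ of Cases~4 and~5, so the floor/ceiling bookkeeping needed to show that the residual range contains no definite space — and, simultaneously, that the only failures are exactly the two listed special conditions — is where the real effort lies. A secondary subtlety, implicit above, is that one must use the coefficientwise inequality in the correct direction: that a summand of $\prod_i\beta_{\lambda_i}$ has bounded definite range does not bound the definite range of the whole product, but that a summand is genuinely indefinite at some level does force indefiniteness of the whole product there.
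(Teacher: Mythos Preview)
Your route is genuinely different from the paper's, and it has a real gap. The paper does not replace $\lambda_n$ by a negative weight and invoke Case~5; instead it takes the three-factor sub-tensor $M_{\lambda_1}\otimes M_{\lambda_2}\otimes M_{\lambda_n}$ and applies the already-proven $n=3$, $p=3$ classification. Since a nondefinite level of any sub-tensor forces the same level of $M$ to be nondefinite, and since the $n=3$ exceptional types all have $\floor{\lambda_1}-\floor{\lambda_n}\le 1$, this immediately pins the implicit type of $M$ to one of $\langle 0,0,\ldots,0\rangle$, $\langle 1,0,\ldots,0\rangle$, $\langle 1,1,\ldots,1\rangle$; a finite check at $n=5$ then disposes of $\langle 1,1,\ldots,1\rangle$. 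This is a two-sentence proof, with no residual-range computation at all.

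The gap in your argument is the ``domination'' step. You claim that because $L_{\lambda_n}$ has nonnegative $s$-free $e$-coefficients, the full product $\prod_i\beta_{\lambda_i}$ dominates $Y=\bigl(\prod_{i<n}\beta_{\lambda_i}\bigr)\beta_{\lambda_n-2\ceil{\lambda_n}}$ coefficientwise, so that indefiniteness of $Y$ at a level forces indefiniteness of $M$ there. But nonnegativity of $e$-coefficients of the difference does \emph{not} imply nonnegativity of its $\beta$-decomposition: for instance $\beta_{1/2}-\beta_{-3/2}=e^{1/2}$ has positive $e$-coefficient yet the $\beta$-coefficient at $\beta_{-3/2}$ is $-1$. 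What you actually need is that $(\prod_{i<n}\beta_{\lambda_i})\cdot L_{\lambda_n}$ has a $\beta$-decomposition with coefficients in $\mathbb{Z}_{\ge 0}[s]$, and $L_{\lambda_n}$ is not the signature character of any $\mathfrak{sl}_2$-module, so this is not automatic. In the paper, the analogous ``contains'' assertions (as in Lemmas~\ref{lem:pppepl2} and \ref{lem:pppepg2}) are justified not by coefficientwise positivity of a remainder but by the explicit $n=2$ closed form for $g$, and typically require fudging fractional parts. Your containment can probably be salvaged that way, but as written it is unjustified, and even once fixed you are still left with the residual-range computation you flag as the hard part---work that the paper's three-factor sub-tensor trick avoids entirely.
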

\begin{proof}
Since $M$ contains $M_{\lambda_1} \otimes M_{\lambda_2} \otimes M_{\lambda_n}$ as a sub-tensor product, the already proven $n=3$ subcase of Case 5 gives that the only possible implicit types with exceptional definite spaces are $\langle 0, 0, ..., 0 \rangle$, $\langle 1, 1, ... , 1 \rangle$, and $\langle 1, 0, ..., 0 \rangle$. None of the (finitely many) explicit types for $n=5$ that have implicit type $\langle 1, 1, ..., 1 \rangle$ has exceptional definite spaces, so the claim is proven.
\end{proof}

\begin{lemma}\label{lem:pnminus12} Suppose $p \geq 4$ and $M$ has explicit type $\langle 0,0,...,0 \rangle$. Then it has exactly three definite spaces.
\end{lemma}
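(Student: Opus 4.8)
The plan is to compute the signature character $\prod_{i=1}^{p}\beta_{\lambda_i}$ of $M$ explicitly in the basis $\{\beta_{\lambda-2m}\}_m$; recall that $E_m$ is definite precisely when the coefficient of $\beta_{\lambda-2m}$ there has absolute value $\dim E_m=\binom{m+p-2}{p-2}$. Since explicit type $\langle 0,0,\dots,0\rangle$ forces $\lambda_i\in(0,1)$ for all $i$ and $\lambda\in(0,1)$, every weight and partial sum that arises is either small or very negative, which keeps the bookkeeping under control; I will in fact get a closed form for \emph{every} coefficient rather than merely bounding the high ones.

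First I would record the identity $\beta_{\lambda_i}=e^{\lambda_i}+\beta_{\lambda_i-2}$, valid because $\floor{\lambda_i}=0$: expanding both sides via the Proposition and using $s^2=1$, each equals $e^{\lambda_i}+\sum_{j\geq1}s^{j+1}e^{\lambda_i-2j}$. Plugging this in for each factor,
$$
\prod_{i=1}^{p}\beta_{\lambda_i}=\sum_{S\subseteq[p]}e^{\sum_{i\in S}\lambda_i}\prod_{i\notin S}\beta_{\lambda_i-2}.
$$
For a subset $S$ with $r:=|[p]\setminus S|\geq 2$, all the highest weights $\lambda_i-2$ are negative, so the $p=0$ classification (evaluated at $s=-1$) rewrites $\prod_{i\notin S}\beta_{\lambda_i-2}$ as $\sum_{m\geq0}(-1)^m\binom{m+r-2}{r-2}\,\beta_{(\sum_{i\notin S}\lambda_i)-2r-2m}$, and since every index occurring is negative the relation $e^t\beta_\mu=\beta_{t+\mu}$ (for $\mu<0$, $t+\mu<0$) turns the $S$-summand into $\sum_{m\geq0}(-1)^m\binom{m+r-2}{r-2}\,\beta_{\lambda-2(r+m)}$. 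The subsets with $r=1$ each contribute $\beta_{\lambda-2}$, and the single subset $S=[p]$ contributes $e^\lambda=\beta_\lambda-\beta_{\lambda-2}$ by Lemma \ref{lem:edecomp1}. Since there are $\binom{p}{r}$ subsets with $|[p]\setminus S|=r$, collecting terms at $s=-1$ gives coefficient $1$ on $\beta_\lambda$, coefficient $p-1$ on $\beta_{\lambda-2}$, and for each $k\geq2$ coefficient
$$
\sum_{r=2}^{\min(k,p)}(-1)^{k-r}\binom{p}{r}\binom{k-2}{r-2}
$$
on $\beta_{\lambda-2k}$.

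Finally I would compare with $\dim E_k=\binom{k+p-2}{p-2}$. For $k=0,1$ the coefficients $1=\binom{p-2}{p-2}$ and $p-1=\binom{p-1}{p-2}$ equal $\pm\dim$, so $E_0$ and $E_1$ are positive definite. For $k\geq2$, the Vandermonde identity gives $\sum_{r=2}^{\min(k,p)}\binom{p}{r}\binom{k-2}{r-2}=\binom{k+p-2}{p-2}$, so the $k$-th coefficient has absolute value at most $\dim E_k$, with equality iff the summands $(-1)^{k-r}\binom{p}{r}\binom{k-2}{r-2}$ all share a sign. These summands are nonzero precisely for $2\leq r\leq\min(k,p)$ and their signs alternate with $r$, so equality holds iff $\min(k,p)=2$; as $p\geq4$ this forces $k=2$. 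Thus $E_0,E_1,E_2$ are (positive) definite and $E_k$ is nondefinite for $k\geq3$, giving exactly three definite spaces. I expect the error-prone part to be the expansion above — in particular checking that every invocation of the $p=0$ classification and of $e^t\beta_\mu=\beta_{t+\mu}$ is legitimate (all indices negative) — while the concluding sign-counting and Vandermonde step is routine.
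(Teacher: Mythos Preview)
Your proof is correct and self-contained, but it takes a different route from the paper. The paper's argument is a two-line reduction: it checks that in the tensor of any three of the $M_{\lambda_i}$ the level $3$ multiplicity coefficient is $1+3s$ (hence nondefinite), and then invokes the ``nondefiniteness propagates upward when you tensor in further factors'' principle used repeatedly in Case 6 to conclude that no level $\geq 3$ is definite in $M$; the positivity of levels $0,1,2$ is checked directly.

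By contrast, you expand the full $p$-fold product via $\beta_{\lambda_i}=e^{\lambda_i}+\beta_{\lambda_i-2}$, reduce every summand to a linear combination of $\beta_{\lambda-2k}^-$'s, and obtain the closed form $\sum_{r=2}^{\min(k,p)}(-1)^{k-r}\binom{p}{r}\binom{k-2}{r-2}$ for the signature of $E_k$ when $k\ge 2$. The Vandermonde identity then shows this is a signed sum of the Kostka-type summands whose absolute values add to $\dim E_k$, so definiteness is equivalent to a single $r$ appearing, i.e.\ $k=2$. Your checks of the boundary cases $r=0,1$ and of the hypotheses for $e^t\beta_\mu=\beta_{t+\mu}$ (all indices negative because $\lambda<1$ and $r\ge2$) are correct.

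What each approach buys: the paper's argument is shorter and reuses machinery already set up, but relies on the propagation principle and separate verification of levels $0,1,2$. Your argument is longer but entirely explicit, yields the exact signature of \emph{every} $E_k$ (not just definiteness), and does not depend on any earlier case of the classification except Case~1 (the $p=0$ decomposition) and Lemma~\ref{lem:edecomp1}.
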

\begin{proof} It is easily checked that in a tensor product decomposition of three Verma modules of the original set of Verma modules, the level 3 space has multiplicity signature $1+3s$. Thus no space after level 2 is definite in $M$. It is then easily checked that levels 0, 1,  and 2 are definite.
\end{proof}

\begin{lemma} Suppose $p > 4$ and $M$ has implicit type $\langle 0,0, ..., 0\rangle$. Then if $\lambda_+ < 1$, $M$ has exactly 3 definite spaces: They are levels 0,1,2 and they are positive definite. If $\lambda_+ > 1$, then $M$ has exactly 2 definite spaces: They are levels 0,1 and they are positive definite.
\end{lemma}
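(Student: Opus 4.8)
\emph{Proof plan.} The strategy is the two–step shape used throughout Appendix~\ref{app:n4class}: first cut the set of candidate definite spaces down to a small finite set by a sub-product argument, then compute the signatures of those spaces from the signature character. (Note $\lambda_+=\lambda$ here, since $p=n$.)

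Since $M$ has implicit type $\langle 0,\dots,0\rangle$ and $p=n$, each $\lambda_i$ lies in $(0,1)$, so $\ceil{\lambda_i}=1$ and the series formula for $\beta_{\lambda_i}$ collapses to the signature–character identity $\beta_{\lambda_i}=e^{\lambda_i}+\beta_{\lambda_i-2}$. Expanding $\beta_{\lambda_1}\beta_{\lambda_2}=e^{\lambda_1}e^{\lambda_2}+e^{\lambda_1}\beta_{\lambda_2-2}+e^{\lambda_2}\beta_{\lambda_1-2}+\beta_{\lambda_1-2}\beta_{\lambda_2-2}$, I would observe that $\prod_{i=1}^n\beta_{\lambda_i}$ contains $\beta_{\lambda_1-2}\,\beta_{\lambda_2-2}\,\prod_{i=3}^n\beta_{\lambda_i}$ as a direct summand of signature characters. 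The latter is the signature character of $M_{\lambda_1-2}\otimes M_{\lambda_2-2}\otimes\bigotimes_{i=3}^n M_{\lambda_i}$, a tensor product of $n\ge 5$ generic Verma modules with exactly $n-2\ge 3$ positive and two negative highest weights and total weight $\lambda-4$; by the classification of Case~4 (Lemmas \ref{lem:pn21} and \ref{lem:pn22}) it has exactly one definite multiplicity space, namely its level $0$ space, of weight $\lambda-4$, so it is indefinite at every weight $\lambda-2k$ with $k\ge 3$. Since a direct summand that is indefinite at a weight forces the whole character to be indefinite there, this reduces the problem to the level $0$, $1$ and $2$ spaces of $M$.

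For the signatures of these three spaces I would expand $\prod_i\beta_{\lambda_i}^-$ in powers of $e$ and convert back with Lemma~\ref{lem:edecomp1}. Because $\beta_{\lambda_i}=e^{\lambda_i}\beta_0$, one has $\prod_i\beta_{\lambda_i}^-=e^{\lambda}(\beta_0^-)^n$, and since $\beta_0^-=1+e^{-2}-e^{-4}+\cdots$ this gives $\prod_i\beta_{\lambda_i}^-=e^{\lambda}+n\,e^{\lambda-2}+\bigl(\binom{n}{2}-n\bigr)e^{\lambda-4}+\cdots$. Running each $e^{\lambda-2k}$ through Lemma~\ref{lem:edecomp1} and collecting, the coefficient of $\beta_\lambda^-$ is $1=\dim E_0$ and that of $\beta_{\lambda-2}^-$ is $n-1=\dim E_1$ for every value of $\lambda$, so $E_0$ and $E_1$ are always positive definite; for $E_2$, of dimension $\binom{n}{2}$, the term $(\operatorname{sign}(1-\mu)-1)\beta^-_{\mu-2\ceil{\mu}}$ in Lemma~\ref{lem:edecomp1} can enter the count, and a short case analysis according to whether $\ceil{\lambda}$ is $1$, $2$, or at least $3$ (i.e.\ $\lambda<1$, $1<\lambda<2$, or $\lambda>2$) shows the coefficient of $\beta_{\lambda-4}^-$ to be $\binom{n}{2}$, $\binom{n}{2}-2$, and $\binom{n}{2}-2n$ respectively. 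As $p=n>4$, neither $\binom{n}{2}-2$ nor $\binom{n}{2}-2n$ equals $\pm\binom{n}{2}$, so $E_2$ is positive definite exactly when $\lambda<1$ and indefinite when $\lambda>1$; combining with the previous paragraph yields precisely the stated answer: $E_0,E_1,E_2$ (all positive definite) when $\lambda_+<1$, and $E_0,E_1$ (both positive definite) when $\lambda_+>1$.

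The main obstacle I anticipate is the bookkeeping in the $E_2$ computation. One has to check carefully that the conversion of Lemma~\ref{lem:edecomp1} — whose third term behaves differently in the three ranges of $\lambda$ — never lets a contribution from $e^{\lambda-2k}$ with $k\ge 3$ leak back into level $2$ (it does not, because the relevant $(\operatorname{sign}(1-\mu)-1)$ factor vanishes for those $\mu$), and one must verify in each range that the resulting coefficient is the asserted value; the hypothesis $p>4$ is used precisely to exclude the coincidences $n=2$ (which would give $\binom{n}{2}-2=-\binom{n}{2}$) and $n=3$ (which would give $\binom{n}{2}-2n=-\binom{n}{2}$) that would otherwise make $E_2$ definite.
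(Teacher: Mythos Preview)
Your overall shape---reduce to low levels by a sub-product, then compute directly---is reasonable and different from the paper's case-split on the minimal $i$ with $\Lambda(i)>1$. Your direct computation at levels $0,1,2$ via $\prod_i\beta_{\lambda_i}^-=e^{\lambda}(\beta_0^-)^n$ and Lemma~\ref{lem:edecomp1} is correct and tidy, including the ``no leak back'' check and the use of $p>4$ to rule out the coincidences $\binom{n}{2}-2=-\binom{n}{2}$ and $\binom{n}{2}-2n=-\binom{n}{2}$.

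The gap is in the sub-product step. Writing $\beta_{\lambda_1}\beta_{\lambda_2}=e^{\lambda_1+\lambda_2}+e^{\lambda_1}\beta_{\lambda_2-2}+e^{\lambda_2}\beta_{\lambda_1-2}+\beta_{\lambda_1-2}\beta_{\lambda_2-2}$ is a valid identity of \emph{weight-space} signature characters, but the inference ``a summand indefinite at a level forces the whole indefinite there'' requires the complement to have a nonnegative \emph{$\beta$-decomposition}, not merely nonnegative weight coefficients. The term $e^{\lambda_1+\lambda_2}\prod_{i\ge 3}\beta_{\lambda_i}$ is not the signature character of a sum of Verma modules (it is $\mathbb{C}_{\lambda_1+\lambda_2}\otimes\bigotimes_{i\ge 3}M_{\lambda_i}$, which does not split into Vermas), and in fact for $\mu=\lambda_1+\lambda_2\in(1,2)$ one computes $e^{\mu}+2\beta_{\mu-2}=\beta_\mu+\beta_{\mu-2}+(s-1)\beta_{\mu-4}$, so the complement picks up a genuine $(s-1)$-coefficient. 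Thus ``$\prod_i\beta_{\lambda_i}$ contains $\beta_{\lambda_1-2}\beta_{\lambda_2-2}\prod_{i\ge 3}\beta_{\lambda_i}$'' in the sense you need is unproven (and dubious) precisely in the regime $\lambda_1+\lambda_2>1$ where you most need it.

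The fix is easy and in the spirit of the paper: use a sub-product that comes from an honest decomposition into Vermas. For instance, expand $\beta_{\lambda_1}\beta_{\lambda_2}\beta_{\lambda_3}$ via the $n=3$, $p=3$ classification and observe that its level-$3$ multiplicity coefficient is $1+3s$ (this is the content of the proof of Lemma~\ref{lem:pnminus12}); then $\prod_i\beta_{\lambda_i}$ genuinely contains $(1+3s)\beta_{\lambda_1+\lambda_2+\lambda_3-6}\prod_{i\ge 4}\beta_{\lambda_i}$, whose level-$k$ piece lands at $M$'s level $k+3$, forcing every level $\ge 3$ of $M$ to be indefinite. With that replacement your argument goes through, and is arguably cleaner than the paper's case analysis since your level-$2$ computation handles all values of $\lambda$ uniformly.
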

\begin{proof} If $\lambda_+ < 1$, then the claim follows from Lemma \ref{lem:pnminus12}. Otherwise suppose $\lambda_+ > 1$. Let $i$ be minimal such that $\Lambda(i) > 1$. From assumptions, $2 \leq i \leq p$. If $2 < i < p$, then in the tensor product of $M_{\lambda_1},\ldots, M_{\lambda_i}$, we can check that the level 2 space is nondefinite, so our claim holds for $M$. Otherwise we have $i = p$ or $i = 2$. If $i = p$, tensor the first $p-1$ Vermas and apply Lemma \ref{lem:pnminus12} and the already proven Case 2 classification to get the claim. If $i = 2$, check that all the (finitely many) explicit types for $n=5$ which have $\lambda_+ > 1$ and implicit type $\langle 0,0,0,0,0 \rangle$ have two definite spaces. So in all cases the claim holds.
\end{proof}

\begin{lemma} Suppose $p > 4$ and $M$ has implicit type $\langle 1,0,...,0 \rangle$. Then if $\lambda_+ < 2$, $M$ has exactly 3 definite spaces: They are levels 0,1,2 and they are positive definite. If $\lambda_+ > 2$, then $M$ has exactly 2 definite spaces: They are levels 0,1 and they are positive definite.
\end{lemma}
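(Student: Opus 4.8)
Write $\sigma=\lambda_+-\lambda_1=\sum_{i=2}^{p}\lambda_i$ and set $N=M_{\lambda_2}\otimes\cdots\otimes M_{\lambda_p}$, so that $M=M_{\lambda_1}\otimes N$, the module $N$ has implicit type $\langle 0,0,\dots,0\rangle$ with $p-1\ge 4$ factors, and $\operatorname{ch}_s(M)=\beta_{\lambda_1}\cdot\operatorname{ch}_s(N)$. Recall that in the case $p=n$ we have $\lambda_+=\lambda=\sum_i\lambda_i$; since $\dim E_m=\binom{m+p-2}{p-2}$ is known, a level-$m$ multiplicity space of $M$ is definite exactly when the coefficient of $\beta_{\lambda-2m}$ in $\operatorname{ch}_s(M)$ has vanishing $s$-part or vanishing constant part. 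The plan is to mirror the argument for the preceding lemma (implicit type $\langle 0,\dots,0\rangle$, $p>4$): obtain the high levels of $M$ by propagating a non-definiteness statement about $N$, and obtain the low levels of $M$ by a short finite computation.

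For the first half I would feed $N$ into the results already established. If $\sigma>1$ then $p-1>4$, so the preceding lemma gives that $N$ has exactly two definite spaces (levels $0,1$, positive definite) and its level-$2$ space is non-definite; note also that $\sigma>1$ forces $\lambda_+=\lambda_1+\sigma>2$. If $\sigma<1$ then $N$ has explicit type $\langle 0,\dots,0\rangle$, so Lemma \ref{lem:pnminus12} together with the discussion following it gives that $N$ has exactly three definite spaces (levels $0,1,2$, positive definite) and its level-$3$ space is non-definite. In either situation I then invoke the propagation principle used throughout this section: non-definiteness of a level-$k$ multiplicity space in a sub-tensor product of $M$ forces non-definiteness of every level-$\ge k$ multiplicity space of $M$, because tensoring a non-definite weight-space contribution with another of these modules keeps it non-definite, uniformly in all higher levels. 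Hence, if $\sigma>1$ all level-$\ge 2$ spaces of $M$ are non-definite, and if $\sigma<1$ all level-$\ge 3$ spaces of $M$ are non-definite.

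For the second half I would expand $\operatorname{ch}_s(M)=\beta_{\lambda_1}\cdot\operatorname{ch}_s(N)$ and read off the coefficient of $\beta_{\lambda-2m}$ for $m\le 2$: each such coefficient is a short sum $\sum_{k}(\text{known }N\text{-coefficient})\cdot(\text{level-}(m-k)\text{ sign of }\beta_{\lambda_1}\beta_{\sigma-2k})$, where the signs of the two-fold tensors $\beta_{\lambda_1}\beta_{\sigma-2k}$ come from the function $g$ of Case 2. Carrying this out I expect: levels $0$ and $1$ of $M$ are always positive definite, with signatures $1$ and $p-1$ equal to their dimensions; when $\lambda_+<2$ (hence necessarily $\sigma<1$) level $2$ of $M$ is positive definite, with signature $\binom{p}{2}$; and when $\lambda_+>2$ level $2$ of $M$ is non-definite, with constant part $\binom{p-1}{2}$ and $s$-part $p-1$, both positive since $p\ge 5$. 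Combining the two halves: if $\lambda_+<2$ then necessarily $\sigma<1$ and the definite spaces of $M$ are exactly levels $0,1,2$; if $\lambda_+>2$ (whether via $\sigma>1$, or via $\sigma<1$ together with $\lambda_1+\sigma>2$) the definite spaces of $M$ are exactly levels $0,1$. The sub-case $\sigma>1$ of the reduction uses the $\langle 0,\dots,0\rangle$ lemma for $p-1>4$ factors, so the single value $p=5$ (where $N$ has only four factors) must be handled separately: this I would do via the already-established $n=4$ instance of the implicit-type-$\langle 0,\dots,0\rangle$ classification, or equivalently by the same bounded signature-character computation over the finitely many explicit types, organized by $\floor{\lambda_+}$, exactly as in the preceding lemma.

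The main obstacle is the bookkeeping in the second half: correctly evaluating $g$ at the handful of argument triples $(\lambda_1,\sigma,m)$ and $(\lambda_1,\sigma-2,m)$ with $m\le 2$, keeping in mind that $g$ records only a sign — its numerical value in the recursive ``$x_1>x_2>0$'' clause is a proxy, not the signature — so as to conclude positivity of levels $0,1$ (and of level $2$ when $\lambda_+<2$) and genuine indefiniteness of level $2$ when $\lambda_+>2$. One must also take care that the propagation principle is invoked only for multiplicity spaces that are non-definite in the strong sense (both constant and $s$-parts nonzero, not merely multiplicity $>1$), which is precisely what the cited statements about $N$ guarantee.
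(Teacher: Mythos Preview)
Your approach is correct and parallel in spirit to the paper's, but you pick a different sub-tensor for the propagation step. The paper simply checks all finitely many explicit types with $p=5$ by hand, then for $p>5$ observes that the first five factors form a sub-tensor of implicit type $\langle 1,0,0,0,0\rangle$; by the $p=5$ computation that sub-tensor already has a non-definite level-$3$ space, so propagation kills every level $\ge 3$ in $M$, and one finishes by checking levels $0,1,2$. You instead peel off $M_{\lambda_1}$ and apply the preceding $\langle 0,\dots,0\rangle$ lemma to $N=M_{\lambda_2}\otimes\cdots\otimes M_{\lambda_p}$, then push the level structure of $N$ through the two-factor function $g$. This buys you the explicit values $a_2=\binom{p-1}{2}$, $b_2=p-1$ at level $2$ when $\lambda_+>2$ (which are indeed correct---the computation from the coefficient of $e^{\lambda-4}$ checks out), at the cost of a longer case analysis on $\sigma$ and of still having to treat $p=5$ separately. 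One organizational point: in the paper the $p=4$ classification lemma appears \emph{after} the present one, so your ``already-established $n=4$ instance'' would be a forward reference; your stated alternative of directly checking the finitely many $p=5$ explicit types is exactly what the paper does and sidesteps this.
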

\begin{proof} Check that the claim holds for all (finitely many) explicit types of tensors which have $p=5$ and implicit type $\langle 1,0,..,0 \rangle$. For $p > 5$, tensoring the 5 Vermas with largest highest weights and using the result in the last sentence gives that no space after level 3 can be definite. Check that if $\lambda_+ < 2$ there are three definite spaces and if $\lambda_+ > 2$ there are two definite spaces as claimed.
\end{proof}

\begin{lemma} Suppose $p = 4$. Then there are $\ceil{\lambda_p+1}$ definite spaces in $M$ as described before except for the following exceptions:
\begin{itemize}
\item Explicit type $\langle 0,0,0,0,0 \rangle$, in which level 0,1,2 spaces are positive definite.
\item Explicit type $\langle 3,0,0,0,0 \rangle$, in which level 0,1 spaces are positive definite and level 3 space is negative definite.
\item Explicit type $\langle 1,1,0,0,0 \rangle$, in which level 0,1,2 spaces are positive definite.
\item Explicit type $\langle 4,1,1,1,1 \rangle$, in which the level 0,1,2,4 spaces are positive definite.
\end{itemize}
\end{lemma}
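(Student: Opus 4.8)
The plan is to reduce to a finite problem via Lemma \ref{lem:speccond} and then run the signature-character machinery of Section \ref{sec:cpb} (together with Lemma \ref{lem:edecomp1}) on each surviving explicit type. By Lemma \ref{lem:speccond}, if $\floor{\lambda_1}\ge 2$ then $M$ has exactly $\ceil{\lambda_p+1}$ definite spaces and no exceptions, so I may assume $\floor{\lambda_1}\in\{0,1\}$. Since $\lambda_1\ge\lambda_2\ge\lambda_3\ge\lambda_4>0$ are generic, this forces $\floor{\lambda_i}\in\{0,1\}$ for every $i$, hence each $\lambda_i\in(0,1)\cup(1,2)$ and $\lambda=\sum_i\lambda_i\in(0,8)$, so $\floor{\lambda}\in\{0,\dots,7\}$. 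Thus only finitely many explicit types $\langle\floor{\lambda},\floor{\lambda_1},\dots,\floor{\lambda_4}\rangle$ remain, and since the multiplicity-space signatures depend only on the explicit type, it suffices to treat each one. The iterated $n=2$ classification already gives that levels $0$ through $\ceil{\lambda_4}\le 2$ are positive definite, so in each case only the possibility of \emph{additional} definite spaces is at issue.

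Next I would bound the level of any additional definite space, so that the per-type check is genuinely finite. Fixing an admissible explicit type and realizing it with suitable fractional parts, I would expand, exactly as in the proof of Corollary \ref{cor:cpbsharp} with $n=p=4$,
$$\prod_{i=1}^4\beta_{\lambda_i}^-\;=\;e^{\lambda+4}\Big(\textstyle\sum_{k\ge0}(-1)^k\binom{k+2}{2}\,\beta_{-4-2k}^-\Big)\prod_{i=1}^4 V_{\ceil{\lambda_i}}(e^{-2}),$$
where $e^{\lambda+4}$ abbreviates $e^{\sum_i(\ceil{\lambda_i}+\{\lambda_i\})}$. Writing $\prod_i V_{\ceil{\lambda_i}}(x)=\sum_{i=0}^T c_i x^i$ with $T=\sum_i\ceil{\lambda_i}\le 8$ and invoking Lemma \ref{lem:finperturbs}, one obtains, for all $m$ beyond an explicit threshold $m_0=O(1)$, the quadratic-in-$m$ identity $\operatorname{sgn}(E_m)\cdot(-1)^m=\sum_{i=0}^T(-1)^i c_i\binom{m+2-i}{2}$. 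Its leading coefficient is $\tfrac12\prod_i V_{\ceil{\lambda_i}}(-1)=\pm\tfrac12$, matching up to sign that of $\dim E_m=\binom{m+2}{2}$; but the coefficient of $m$ in the difference against $\pm\binom{m+2}{2}$ is governed by a $\prod_i V_{\ceil{\lambda_i}}'(-1)$-type sum, which a short computation shows equals $\pm 8\ne 0$ because each $\ceil{\lambda_i}\in\{1,2\}$. Hence $\operatorname{sgn}(E_m)=\pm\dim E_m$ can hold only at finitely many, explicitly bounded values of $m$; in particular every definite space has level at most an absolute constant (a crude bound such as $15$ suffices), and containment arguments as in Cases 3--5 bring this down to a handful of levels.

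With the level bounded, it then remains, for each admissible explicit type and each level $\ceil{\lambda_4}<m\le\max(m_0,15)$, to carry out the routine computation: expand the coefficient of $e^{\lambda-2m}$ in $\prod_i\beta_{\lambda_i}^-$ as a sum of powers of $e$ (via $\beta_0^-=\beta_{-1}^- e\,V_1(e^{-2})$ and $\beta_1^-=\beta_{-1}^- e^2 V_2(e^{-2})$), rewrite it as a $\beta^-$-combination using Lemma \ref{lem:edecomp1} and Lemma \ref{lem:finperturbs}, and compare $a_m-b_m$ with $a_m+b_m=\binom{m+2}{2}$ --- precisely the style of Lemma \ref{lem:pnminus12} and the other lemmas in Appendix \ref{app:n4class}. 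Doing this case by case singles out exactly the four listed exceptional explicit types ($\langle 0,0,0,0,0\rangle$, level $2$, positive definite; $\langle 3,0,0,0,0\rangle$, level $3$, negative definite; $\langle 1,1,0,0,0\rangle$, level $2$, positive definite; $\langle 4,1,1,1,1\rangle$, level $4$, positive definite), with every other admissible explicit type having its definite spaces confined to levels $0$ through $\ceil{\lambda_p}$.

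The main obstacle is organizational rather than conceptual: making sure the list of admissible explicit types is complete, pinning down the stable-tail threshold $m_0$ correctly for each type, and keeping the sign bookkeeping straight when passing from $e$-power expansions to $\beta^-$-combinations (where signs enter through Lemma \ref{lem:edecomp1}). The device that keeps the finite verification short enough to do by hand is the containment reduction: replacing any two of the positive factors $\beta_{\lambda_i}$ by their negative tails $\beta_{\lambda_i-2\ceil{\lambda_i}}$ exhibits $\prod_i\beta_{\lambda_i}^-$ as containing a $(+,+,-,-)$ product, in which (Case 4, $p=2$, $n=4$) only the level $0$ space is definite, ruling out all but a handful of low-level candidates before any explicit expansion is needed.
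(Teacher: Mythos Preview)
Your approach is correct and coincides with the paper's: both invoke Lemma \ref{lem:speccond} to reduce to the finitely many explicit types with $\floor{\lambda_1}\in\{0,1\}$ and then verify each one directly. The paper's proof is a single sentence to that effect, whereas you additionally supply a mechanism (the stable-tail asymptotic and the containment reduction to a $(+,+,-,-)$ product) for bounding the relevant levels; this is sound and more explicit than what the paper records.
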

\begin{proof} From Lemma \ref{lem:speccond}, we only need to check the (finitely many) explicit types for which $\lambda_1 < 2$.
\end{proof}

Combining the results in this subsection gives the $n \geq 4$ subcase of Case 5.

\section{Proof of Lemma \ref{bQreal}}\label{app:bQreal}
Recall the definition
$
b_Q = Y(t_1)Y(t_2) \cdots Y(t_m)v
$
from Section \ref{sec:cpbprelims}. Expanding this gives the equivalent definition
$$
b_Q = \sum_{a_1+a_2+...+a_n = m} \bigotimes_{i=1}^n F^{a_i}v_i \cdot \left( \sum_{\sigma} \frac{1}{\prod_{j=1}^m (t_j - z_{\sigma(j)})} \right),
$$
where $\sigma$ ranges over all maps $\sigma:[m]\to[n]$ satisfying $|\sigma^{-1}\{i\}|=a_i$.

We assume that the joint eigenvalue (for the action of $\mathcal{H}_1,\ldots,\mathcal{H}_n$) of $b_Q$ is real. The operators $\mathcal{H}_i$ preserve the obvious real form of $\bigotimes_{i=1}^n M_{\lambda_i}$. It follows that the real and imaginary parts of $b_Q$ are also joint eigenvectors with the same joint eigenvalue. But recall (Lemma \ref{lemyo}) that the joint eigenspaces are all one-dimensional; it follows that $b_Q$ is proportional to a real vector: there exists $a\in\mathbb{C}^\times$ such that $\sum_{\sigma} \frac{1}{\prod_{j=1}^m (t_j - z_{\sigma(j)})}\in a\mathbb{R}$ for all partitions $(a_1,\ldots,a_n)$ of $m$. Considering partitions with $n-1$ zero entries, one obtains that $1/Q(z_i)\in a\mathbb{R}$ for each $i$.

We show by induction on $k$ that $Q^{(k)}(z_i)/Q(z_i)\in\mathbb{R}$ for each $i$. The case $k=1$ is the assumption that the joint eigenvalue is real, see the corollary to Lemma \ref{lem:hb}.
Assume that $Q'(z_i)/Q(z_i), ..., Q^{(k-1)}(z_i)/Q(z_i)$ are all real for each $i$. We show that $Q^{(k)}(z_i)$ is real for each $i$. Consider the partition $p_k = (m-k,k,0,...,0)$ of $m$. The coefficient of the summand of $b_Q$ indexed by $p_k$ is
$$
\frac{1}{Q(z_1)} \cdot \sum_{k\text{-subsets $S$ of } \{1,2,...,m\}} \frac{\prod_{i \in S} (t_{i}-z_1)}{\prod_{i \in S} (t_{i}-z_2)}\in a\mathbb{R};
$$
multiplying by $Q(z_1)$ we get that 
$$
\sum_{k\text{-subsets $S$ of } \{1,2,...,m\}} \frac{\prod_{i \in S} (t_{i}-z_1)}{\prod_{i \in S} (t_{i}-z_2)}\in \mathbb{R}.
$$
Define a polynomial associated to the $k$-set $S$ by $R_S(z) = \prod_{i \in S} (t_{i}-z)$. Then the line above says that
$$
\sum_{k\text{-subsets } S} \frac{R_S(z_1)}{R_S(z_2)}\in\mathbb{R}.
$$
Using the Taylor expansion
$$
\frac{R_S(z_1)-R_S(z_2)}{z_1-z_2} = R'_S(z_2) + \left( \frac{z_1-z_2}{2!} \right)R''_S(z_2) + \dots
$$
we get that
$$
\sum_{k\text{-subsets } S} \frac{R'_S(z_2) + \left( \frac{z_1-z_2}{2!} \right)R''_S(z_2) + ...}{R_S(z_2)} = \frac{1}{z_1-z_2}\sum_{k\text{-subsets } S} \frac{R_S(z_1)}{R_S(z_2)}-\frac{\binom{m}{k}}{z_1-z_2}\in\mathbb{R}.
$$
But this is equal to
\begin{align*} & \sum_{k\text{-subsets } S} \left(\sum_{\{j_1\} \subset S} \frac{1}{z_2-t_{j_1}} + (z_1-z_2) \sum_{2-\text{subsets } \{j_1,j_2\}\subset S} \frac{1}{(z_2-t_{j_1})(z_2-t_{j_2})} + \ldots\right)\\
= & \binom{m-1}{k-1}\frac{Q'(z_2)}{Q(z_2)}+\binom{m-2}{k-2}\frac{z_1-z_2}{2!}\frac{Q''(z_2)}{Q(z_2)}+\binom{m-3}{k-3}\frac{(z_1-z_2)^2}{3!}\frac{Q'''(z_2)}{Q(z_2)}+\ldots + \binom{m-k}{0}\frac{(z_1-z_2)^{k-1}}{(k)!}\frac{Q^{(k)}(z_2)}{Q(z_2)}
\end{align*}
All but the last of these summands are assumed to be real in the inductive hypothesis. Therefore the last term is also real, and rescaling we get that $\frac{Q^{(k)}(z_2)}{Q(z_2)}\in\mathbb{R}$. We similarly prove (using permutations of the partitions $p_k$) that $\frac{Q^{(k)}(z_i)}{Q(z_i)}\in\mathbb{R}$ for all $i$. This completes the induction.\\

\noindent We have shown that $Q^{(k)}(z_i)\in a^{-1}\mathbb{R}$ for all $i$, $k$. But taking $k=m$ gives $\pm m! \in a^{-1}\mathbb{R}$, so $a\in\mathbb{R}$ and hence $Q^{(k)}(z_i)\in \mathbb{R}$ for all $i$, $k$.\\

\noindent Let $P(z)=Q(z+z_1)$. Then $P$ has real coefficients if and only if $Q$ does, since $z_1$ is real. But all derivatives of $P$ evaluated at $0$ give real numbers. So $P$, and hence $Q$, has real coefficients.

\end{document}